\def\dfrac#1#2{\ds{\frac{#1}{#2}}}
\newcommand{\R}{{\mathbb R}}
\newcommand{\N}{{\mathbb N}}
\newcommand{\re}{{\mathbb R}}
\newcommand{\EEE}{{\mathbb E}}
\newcommand{\cT}{{\mathcal T}}
\newcommand{\cP}{{\mathcal P}}
\let\ds=\displaystyle
\def\R{{\mathbb R}}
\def\N{{\mathbb  N}}
\def\N{{\mathbb N}}
\def\ds{\displaystyle}
\numberwithin{equation}{section}
 \newtheorem{theorem}{Theorem}[section]
 \newtheorem{lemma}{Lemma}[section]
 \newtheorem{proposition}{Proposition}[section]
 \newtheorem{definition}{Definition}[section]
 \newtheorem{corollary}{Corollary}[section]
 \newtheorem{remark}{Remark}[section]
\DeclareMathOperator{\diver}{div}
\begin{document}
\title{Deterministic mean field games with control on the acceleration}

\author{{ Yves Achdou \thanks{
Universit{\'e} de Paris, Laboratoire Jacques-Louis Lions (LJLL), F-75013 Paris, France, achdou@ljll-univ-paris-diderot.fr}
 Paola Mannucci\thanks{Dipartimento di Matematica ``Tullio Levi-Civita'', Universit{\`a} di Padova, mannucci@math.unipd.it}}, \\
{Claudio Marchi \thanks{Dipartimento di Ingegneria dell'Informazione, Universit{\`a} di Padova, claudio.marchi@unipd.it},
  Nicoletta Tchou\thanks{Univ Rennes, CNRS, IRMAR - UMR 6625, F-35000 Rennes, France, nicoletta.tchou@univ-rennes1.fr}}}

\maketitle

\begin{abstract}
 In the present work, we study   deterministic mean field games (MFGs)  with finite time horizon in which the dynamics of a generic agent is controlled by the acceleration.  They are described by a system of PDEs
coupling a continuity  equation for the density of the distribution of states (forward in time) and a  Hamilton-Jacobi  (HJ) equation for the optimal value of a representative agent (backward in time).

The state variable is the pair $(x,v)\in \R ^N\times \R^N$  where $x$ stands for the position and $v$ stands for the velocity. The dynamics is often referred to as the {\sl double integrator}. In this case, the Hamiltonian of the system is neither strictly convex nor coercive, hence the available results on MFGs cannot be applied. Moreover, we will assume that the Hamiltonian is unbounded w.r.t. the velocity variable $v$. 
We prove the existence of a weak solution of the MFG system via a vanishing viscosity method and we characterize  the 
distribution of states as the image of the initial distribution by the flow associated with the optimal control.
\end{abstract}
\noindent {\bf Keywords}: Mean field games,  first order Hamilton-Jacobi equations, double integrator, non-coercive Hamiltonian.

\noindent  {\bf 2010 AMS Subject classification:} 35F50, 35Q91, 49K20, 49L25.

\section{Introduction}
 The theory of  mean field games (MFGs for short) is more and more investigated since the pioneering works \cite{ll06-1,ll06-2,ll07} of  Lasry and Lions: it aims at studying the  asymptotic behaviour of   differential games (Nash equilibria) as the number  of agents tends to infinity.
 In the present work, we study   deterministic mean field games  with finite time horizon in which the dynamics of a generic agent is controlled by the acceleration.  They are described by a system of PDEs
coupling a continuity  equation for the density of the distribution of states (forward in time) and a  Hamilton-Jacobi  (HJ) equation for the optimal value of a representative agent (backward in time). The state variable is the pair $(x,v)\in \R ^N\times \R^N$  where $x$ stands for the position and $v$ stands for the velocity. 

The  systems of PDEs are of the form
\begin{equation}
\label{eq:MFGA}
\left\{
\begin{array}{rll}
(i)&-\partial_t u-v\cdot D_xu+H(x,v,D_vu)-F[m(t)](x,v)=0&\qquad \textrm{in }\re^{2N}\times (0,T)\\
(ii)& \partial_t m +v\cdot D_xm-{\rm div}_v(D_{p_v}H(x,v,D_vu)m)=0&\qquad \textrm{in }\re^{2N}\times (0,T)\\
(iii)& m(x,v,0)=m_0(x,v), u(x,v,T)=G[m(T)](x,v)\,,&\qquad \textrm{on }\re^{2N}
\end{array}\right.
\end{equation}
where $T$ is  a  positive real number, $u=u(x,v,t)$, $m=m(x,v,t)$, $(x,v)\in\R^{2N}$, $t\in(0,T)$ and
$H$ is defined by
\begin{equation}
\label{HamA}
H(x,v,p_v)=\max_{\alpha\in \R^N}(-\alpha p_v-l(x,v,\alpha)).
\end{equation}
We take $F$ and $G$ strongly regularizing and we assume that the running cost has the form $l(x,v,\alpha)=l(x,v)+\frac1 2 \vert \alpha\vert^2+\frac1 2 \vert v \vert^2$, where $(x,v)\mapsto l(x,v)$ is a bounded and $C^2$-bounded function.

{\it Formally}, systems of this form arise when the dynamics of the generic player is described by a {\sl double integrator}:
\begin{equation}\label{eq:HJA}
\left\{
\begin{array}{rcll}
 \xi'(s)&=&\eta(s),\quad   &s\in (t,T),\\
 \eta'(s)&=&\alpha(s),\quad   &s\in (t,T),\\
 \xi(t)&=&x, &\\\eta(t)&=&v,& 
\end{array}\right.
\end{equation}
and when the control law belongs to the space of the measurable  functions with values in $\R^N$ and is chosen in 
order to minimize the cost
\begin{equation}\label{cost}
J_t:=J_t(\xi,\eta, \alpha)
=\displaystyle\int_t^T
l(\xi(s), \eta(s), \alpha(s))+F[m(s)](\xi(s), \eta(s))ds+G[m(T)](\xi(T), \eta(T)).
\end{equation}
To summarize, the main features of this model are:
\begin{enumerate}
\item  The control $\alpha$ is only involved in the dynamics of the second component of the state variable, see  \eqref{eq:HJA}.
\item The running cost has the form
  \begin{equation}
    \label{eq:10}
l(\xi,\eta,\alpha)=l(\xi,\eta)+\frac1 2 \vert  \eta \vert^2+\frac1 2 \vert  \alpha \vert^2,
  \end{equation}
where $(\xi,\eta)\mapsto l(\xi,\eta)$ is a bounded $C^2$ function, thus the former is unbounded w.r.t. the variable $\eta$.
Note that $\vert  \eta \vert^2$ stands for a kinetic energy, whereas the term $\vert  \alpha \vert^2$ is a penalty for large accelerations.
Note also that the results of the present paper hold  for a fairly large class of generalizations of (\ref{eq:10}).
\item Setting $f(\xi, \eta, \alpha)= (\eta, \alpha)$, the Hamiltonian associated to the control problem of a generic player is 
  \begin{displaymath}
\mathcal{H}( \xi,\eta, p)= \max_{\alpha\in \R^N}\{-p\cdot f(\xi, \eta,\alpha) -l(\xi,\eta,\alpha)\}=-p_x\cdot \eta +H(\xi,\eta, p_v),    
  \end{displaymath}
where $p=(p_x,p_v)$ and $H$ is defined in (\ref{HamA}). The Hamilton $\mathcal{H}$ is neither  strictly convex  nor  coercive
with respect to $p=(p_x, p_v)$. Hence the available results 
on the regularity of the value function $u$ of the associated optimal control problem (\cite{CS}, \cite{Cla}, \cite{C})  and on the
existence of a solution of the MFG system (\cite{C}) cannot be applied.
\end{enumerate}
We recently learnt  that a similar type of mean field games has been studied in \cite{CM}, independently, at the same time, and with different techniques.  To the best of our knowledge, these systems have not been investigated elsewhere.

The main results of the present work are the existence of a solution of~\eqref{eq:MFGAs} and a characterization of the distribution of states~$m$. In order to establish the representation formula for~$m$, we use some ideas introduced by P-L Lions in his lectures at Coll\`ege de France (2012) (see \cite{L-coll,C}), some results proved in~\cite{CH,C13}, and the superposition principle \cite [Theorem 8.2.1]{AGS}. These methods rely on optimal control theory, in particular on optimal synthesis results.
In our setting, the lack of coercivity of $\mathcal{H}$ makes it impossible to directly apply the arguments of~\cite[Sect. 4.1]{C}, in particular a contraction property of the flow associated to the dynamics (see~\cite[Lemma 4.13]{C}).
However, the superposition principle and suitable optimal synthesis results will be used to characterize $m$ as the image of the initial distribution by the optimal flow associated with the Hamilton-Jacobi equation. By standard techniques for monotone operators (see Lasry and Lions~\cite{ll07}), we also obtain the uniqueness of the solution under classical assumptions.

The superposition principle has already  been used in a different approach, for instance in the articles of Cardaliaguet~\cite{C15}, Cardaliaguet, M\'esz\'aros and Santambrogio~\cite{CMS16} and Orrieri, Porretta and Savar\'e~\cite{OPS19}. In these works, the authors tackle the MFG systems of the first order using a variational approach based on two optimization problems in duality, under suitable assumptions. Then, using the superposition principle, they are able to describe the solution to the continuity equation  arising in the optimality conditions of the latter optimization problem by means of a measure on the space of continuous paths. This measure is concentrated on the set of minimizing curves for the optimal control problem underlying the Hamilton-Jacobi equation.

A similar approach to the one of the present paper was recently proposed for a class of non-coercive MFG when the generic player has some ``forbidden direction'' (see \cite{MMMT}), more precisely when, in the two dimensional case, the dynamics  is of the form: $x_1'= \alpha_1$, $x_2'=h(x_1)\alpha_2$ and  $h(x_1)$ may vanish.

In a near future, we plan to  tackle  mean field games with control on the acceleration and with  constraints (for MFGs with state constraints we refer to \cite{ABLLM, CC,CCC, CCC2}).

The paper is organized as follows. In Section \ref{Ass}, we list our assumptions, give the definition of (weak) solution to system~\eqref{eq:MFGAs} and state the existence and uniqueness results for the latter.
In Section \ref{OC},
we obtain some regularity properties for the solution $u$ of the Hamilton-Jacobi equation~\eqref{eq:MFGAs}-(i) with $m$ fixed.
 These properties, combined with the uniqueness of the optimal trajectories of the associated control problem, will be crucial for proving the main theorem.
In Section \ref{sect:c_eq}, we study the continuity equation~\eqref{eq:MFGAs}-(ii). An important ingredient is the vanishing viscosity method that is used to characterize its  solution.
Finally, Section~\ref{sect:MFG} is devoted to the proofs of the main Theorem~\ref{thm:main} on the existence of a solution and of Proposition~\ref{prp:!} on its uniqueness.
In the Appendix, following a suggestion of the referee, we establish the existence and the uniqueness of the solution to the corresponding second order MFG system as a byproduct of the estimates needed for the vanishing viscosity limit.

\section{Assumptions and main results}\label{Ass}

We consider the running cost $l(x,v,\alpha)$ of the form $
l(x,v,\alpha)=l(x,v)+\frac1 2 \vert \alpha\vert^2+\frac1 2 \vert v \vert^2$.\\
Then system~\eqref{eq:MFGA} can be written
\begin{equation}
\label{eq:MFGAs}
\left\{
\begin{array}{rll}
(i)&-\partial_t u-v\cdot D_xu+\frac1 2 \vert D_vu\vert^2-\frac1 2 \vert v \vert^2  -l(x,v)- F[m](x,v)=0,&\quad \textrm{in }\R^{2N}\times (0,T),\\
(ii)& \partial_t m +v\cdot D_xm-{\rm div}_v(D_vu\, m)=0,&\quad \textrm{in }\R^{2N}\times (0,T),\\
(iii)& m(x,v,0)=m_0(x,v),\ u(x,v,T)=G[m(T)](x,v),&\quad \textrm{on }\R^{2N},
\end{array}\right.
\end{equation}
which corresponds to $H(x,v,p_v)=\frac1 2 \vert p_v\vert^2-\frac1 2 \vert v \vert^2-l(x,v)$.

Let $\mathcal P_1$ and $\mathcal P_2$ denote the spaces of Borel probability measures on~$\re^{2N}$
 with respectively finite first and  second order moments, endowed with the Monge-Kantorovich distances~{${\bf d}_1$}, respectively {${\bf d}_2$}. 

Let $C^2(\R^{2N})$ denote the space of twice differentiable functions with continuous and bounded  derivatives up to order two. It is  endowed with the norm \\
$\|f\|_{C^{2}}:=\sup_{(x,v)\in\R^{2N}}[|f(x,v)|+|Df(x,v)|+|D^2f(x,v)|]$.

Hereafter, we shall make the following hypotheses:

\paragraph{\bf Assumptions (H)}
\begin{itemize}
\item[(H1)]\label{H1} The functions~$F$ and $G$ are real-valued continuous functions defined on $\mathcal P_1\times\re^{2N}$
\item[(H2)]\label{H2} The function~$l$ is a real-valued $C^2$ function defined on $\re^{2N}$
\item[(H3)]\label{H3} The map $m\mapsto F[m](\cdot, \cdot)$ is Lipschitz continuous from $\mathcal P_1$ to $C^{2}(\re^{2N})$; moreover, there exists~$C>0$ such that $C \ge  \|l\|_{C^2}$ and
$$\|F[m](\cdot,\cdot)\|_{C^2} + \|G[m](\cdot,\cdot)\|_{C^2}\leq C,\qquad \forall m\in \mathcal P_1
$$
\item[(H4)]\label{H4} the initial distribution~$m_0$, defined on $\re^{2N}$,  
has a compactly supported density (still named $m_0$, with a slight abuse of notation) $m_0\in C^{0,\delta}(\re^{2N})$ for some $\delta\in (0,1)$.
\end{itemize}

\begin{definition}\label{defsolmfg}
The pair $(u,m)$ is a solution of system~\eqref{eq:MFGAs} if:
\begin{itemize}
\item[1)] $u\in W_{\rm loc}^{1,\infty}(\re^{2N}\times[0,T])$, $m\in C([0,T];\mathcal P_1(\re^{2N}))$
and for all $ t\in [0,T]$, $m(t)$ is absolutely continuous with respect to Lebesgue measure on $\R^{2N}$.
Let $m(\cdot, \cdot, t)$ denote the density of $m(t)$. The function $(x,v,t)\mapsto m(x,v,t)$ is bounded.
\item[2)] equation~\eqref{eq:MFGAs}-(i) is satisfied by $u$ in the viscosity sense
\item[3)] equation~\eqref{eq:MFGAs}-(ii) is satisfied by $m$ in the sense of distributions.
\end{itemize}
\end{definition}
We can now state the main result of this paper:
\begin{theorem}\label{thm:main}
Under the assumptions $\rm{(H)}$:
\begin{enumerate}
\item System \eqref{eq:MFGAs} has a solution $(u,m)$ in the sense of Definition~\ref{defsolmfg},
\item $m$ is the image of $m_0$ by the flow
\begin{equation}\label{dyn}
\left\{
\begin{array}{ll}
x'(s)=v(s),& \quad x(0)=x, \\
v'(s)=-D_vu(x(s),v(s),s),&\quad v(0)=v.
\end{array}
\right.
\end{equation}
\end{enumerate}
\end{theorem}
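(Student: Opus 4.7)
The natural strategy is to combine a vanishing viscosity regularization with a Schauder-type fixed point argument, and then to invoke optimal control theory to identify $m$ as the push-forward of $m_0$ by the optimal flow.

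First I would regularize the system by adding a vanishing viscosity term to each equation, say $-\varepsilon\Delta u$ in \eqref{eq:MFGAs}-(i) and $-\varepsilon\Delta m$ in \eqref{eq:MFGAs}-(ii), for a small parameter $\varepsilon>0$. For any fixed $\mu\in C([0,T];\mathcal P_1)$, the backward HJ equation is uniformly parabolic and admits a unique bounded classical solution $u^\varepsilon[\mu]$; solving the forward Fokker--Planck equation with drift $(v,-D_v u^\varepsilon[\mu])$ then produces a continuous curve $m^\varepsilon[\mu]$ in $C([0,T];\mathcal P_1)$. A Schauder fixed point on a suitable convex compact subset yields a pair $(u^\varepsilon,m^\varepsilon)$ solving the viscous MFG system. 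The Appendix supplies the $\varepsilon$-uniform $L^\infty$ and local Lipschitz estimates on $u^\varepsilon$, together with a uniform control on the support of $m^\varepsilon$ in the velocity variable, obtained by comparing the regularized dynamics with the deterministic double integrator \eqref{eq:HJA}.

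To pass to the limit $\varepsilon\to 0$, these bounds, combined with the regularity results of Section~\ref{OC} for the value function of each frozen problem, yield along a subsequence $u^\varepsilon\to u$ locally uniformly and $m^\varepsilon\to m$ in $C([0,T];\mathcal P_1)$. Stability of viscosity solutions then gives \eqref{eq:MFGAs}-(i); the semiconcavity of $u$ in $(x,v)$ implies pointwise a.e.\ convergence of $D_v u^\varepsilon$ to $D_v u$, and, together with the uniform tail control on $m^\varepsilon$, allows one to pass to the limit in the distributional formulation of the continuity equation to obtain \eqref{eq:MFGAs}-(ii). This is the content of Section~\ref{sect:c_eq}.

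For the representation of $m$, I invoke the superposition principle \cite[Theorem~8.2.1]{AGS}: it produces a probability measure $\hat\eta$ on the path space $AC([0,T];\R^{2N})$, concentrated on curves $(\xi,w)$ that solve $\xi'=w$, $w'=-D_v u(\xi,w,\cdot)$ in the distributional sense, whose time marginals coincide with $m(t)$. The optimal synthesis results of Section~\ref{OC} provide $m_0$-a.e.\ uniqueness of the optimal trajectories of the control problem \eqref{eq:HJA}--\eqref{cost}; combined, these two facts imply that $\hat\eta$-almost every curve is the unique optimal trajectory emanating from its initial condition, so that $\hat\eta$ is the image of $m_0$ under the flow \eqref{dyn}, and $m(t)$ is the push-forward of $m_0$ by that flow.

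The main obstacle is the non-coercivity of $\mathcal H$ in $(p_x,p_v)$: the contraction argument of \cite[Lemma~4.13]{C}, which constructs the optimal flow from a globally Lipschitz feedback, is unavailable here. This is precisely why the regularity step of Section~\ref{OC} is essential: exploiting the block structure of $\mathcal H$, one must establish semiconcavity of $u$ in $(x,v)$ and $v$-differentiability on an $m_0$-conull set via optimal synthesis, so that the feedback $-D_v u$ is defined $m(t)$-a.e.\ along the relevant curves and the superposition measure can be unambiguously pulled back to initial data.
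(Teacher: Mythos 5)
Your proposal is sound and reaches both conclusions, but the existence part follows a genuinely different route from the paper. You regularize the \emph{whole} coupled system, run the Schauder fixed point at the viscous level (this is exactly what the paper does only in its Appendix, Theorem~\ref{thm:2order}), and then let $\varepsilon\to0$ in the coupled system, which forces you to pass to the limit simultaneously in the HJ equation (through $F[m^\varepsilon]$, $G[m^\varepsilon(T)]$) and in the continuity equation (through $D_vu^\varepsilon$). The paper instead performs the fixed point directly at the first-order level: for a frozen $\overline m$ it solves the HJ equation by viscosity/control arguments (Section~\ref{OC}), uses vanishing viscosity only for the \emph{linear} continuity equation with frozen coupling (Proposition~\ref{VV}), and then proves continuity and compactness of the resulting map $\cT$ on $\{m\in C([0,T];\mathcal P_1):m(0)=m_0\}$. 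Both routes rest on the same $\sigma$-uniform estimates (Lemmas~\ref{visco:lemma5.2}--\ref{visco:lemma4}); yours yields the second-order existence result as a by-product but requires the compactness of $m^\varepsilon$ in $C([0,T];\mathcal P_1)$ to identify the limiting couplings, while the paper's route only ever passes to the limit in a problem where the coupling is frozen, at the price of a separate continuity/compactness argument for $\cT$ relying on the uniqueness statement of Proposition~\ref{!FP}. Your treatment of point 2 (superposition principle, disintegration, and $m_0$-a.e.\ uniqueness of optimal trajectories via the optimal synthesis and a.e.\ differentiability of $u$) is essentially the paper's proof of Theorem~\ref{prp:m}.

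One detail to correct: there is no ``uniform control on the support of $m^\varepsilon$ in the velocity variable'' --- the viscous density is strictly positive everywhere for $t>0$ (Lemma~\ref{visco:buonapos}), so compact support is lost the moment noise is added. What replaces it, and what your limit passage actually needs, are the $\sigma$-uniform second-moment bounds and the $C^{1/2}$-in-time estimates in $\mathcal P_1$ of Lemma~\ref{visco:lemma4}, obtained from the stochastic representation of the drift-diffusion; your later appeal to ``uniform tail control'' is the right substitute, so this is a misstatement rather than a gap, but the argument should be phrased in terms of moments from the start.
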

\begin{proposition}\label{prp:!}
Under the additional assumptions
\begin{equation}\label{monot}
\int_{\R^{2N}}(F[m_1]-F[m_2]) d(m_1-m_2)>0\quad \textrm{and }
\int_{\R^{2N}}(G[m_1]-G[m_2]) d(m_1-m_2)\geq0  
\end{equation}
for every $m_1,m_2\in\mathcal{P}_1(\R^{2N})$, $m_1\ne m_2$, the solution found in Theorem~\ref{thm:main} is unique.
\end{proposition}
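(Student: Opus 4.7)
My plan is to adapt the classical Lasry--Lions monotonicity argument to the present non-coercive setting. What makes this a priori delicate is that $u$ is only locally Lipschitz (so $D_v u$ exists only a.e.), the Hamiltonian is convex only in $p_v$, and the density $m$ is transported along a degenerate drift, so the formal manipulations need some justification.

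Given two solutions $(u_1,m_1)$ and $(u_2,m_2)$ of \eqref{eq:MFGAs}, I would first compute, at least formally, the time derivative of
\begin{equation*}
\Phi(t) := \int_{\R^{2N}}(u_1-u_2)(x,v,t)\,(m_1-m_2)(x,v,t)\,dx\,dv.
\end{equation*}
Substituting \eqref{eq:MFGAs}-(i) and \eqref{eq:MFGAs}-(ii) written for both solutions and integrating by parts in $(x,v)$, the transport terms $v\cdot D_x$ cancel against each other (since $\mathrm{div}_x v = 0$), while the Hamiltonian and drift contributions combine via the elementary identity $\tfrac12|p_1|^2-\tfrac12|p_2|^2-p_1\cdot(p_1-p_2)=-\tfrac12|p_1-p_2|^2$, applied symmetrically with $p_i=D_v u_i$, to yield
\begin{equation*}
\Phi'(t) = -\frac{1}{2}\int_{\R^{2N}}(m_1+m_2)\,|D_vu_1-D_vu_2|^2\,dx\,dv - \int_{\R^{2N}}(F[m_1]-F[m_2])\,d(m_1-m_2).
\end{equation*}

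Integrating from $0$ to $T$ and using $m_1(0)=m_2(0)=m_0$ (whence $\Phi(0)=0$) together with $u_i(T)=G[m_i(T)]$, I would then obtain the fundamental identity
\begin{equation*}
\begin{aligned}
&\int_{\R^{2N}}(G[m_1(T)]-G[m_2(T)])\,d(m_1(T)-m_2(T)) + \int_0^T\!\!\int_{\R^{2N}}(F[m_1]-F[m_2])\,d(m_1-m_2)\,dt \\
&\qquad\qquad + \frac{1}{2}\int_0^T\!\!\int_{\R^{2N}}(m_1+m_2)\,|D_vu_1-D_vu_2|^2\,dx\,dv\,dt = 0.
\end{aligned}
\end{equation*}
Under \eqref{monot}, each of the three summands is nonnegative, hence each of them must vanish. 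Strict monotonicity of $F$ then forces $m_1(t)=m_2(t)$ for a.e.\ $t\in[0,T]$, and continuity of $t\mapsto m_i(t)$ in $\mathcal P_1$ upgrades this to every $t$. Once $m_1\equiv m_2$, the functions $u_1$ and $u_2$ solve in the viscosity sense the same Hamilton--Jacobi equation \eqref{eq:MFGAs}-(i) with the common terminal condition $G[m_1(T)]$, so a standard comparison argument delivers $u_1=u_2$.

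The main obstacle will be the rigorous justification of the formal computation of $\Phi'$, given the limited regularity of $u_i$ and the mere boundedness of $m_i$. The cleanest way I envisage to handle this is to perform the entire argument at the level of the vanishing-viscosity system from Section~\ref{sect:c_eq}: there $(u^\sigma_i,m^\sigma_i)$ are classical, all integrations by parts are legitimate, and the computation yields the analogue of the identity above enriched with an extra nonnegative viscous contribution; the uniform estimates already established for the existence proof should permit the passage to the limit $\sigma\to 0$. A viscosity-free alternative would be to exploit the representation of $m_i$ provided by Theorem~\ref{thm:main} as the pushforward of $m_0$ by the optimal flow associated with $u_i$, together with the optimal-synthesis regularity derived in Section~\ref{OC}, which guarantees that $u_i$ is differentiable $m_i$-almost everywhere along the optimal trajectories.
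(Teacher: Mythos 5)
Your formal computation is exactly the Lasry--Lions duality identity that the paper also uses (its proof concludes by following \cite[Theorem 2.5]{ll07}), so the skeleton of the argument is the right one. The gap lies in the justification you propose as the main route. Performing the whole argument at the level of the vanishing-viscosity system and letting $\sigma\to 0$ cannot prove uniqueness for the first-order system: given two arbitrary solutions $(u_1,m_1)$, $(u_2,m_2)$ in the sense of Definition~\ref{defsolmfg}, there is no reason why they should arise as $\sigma\to 0$ limits of solutions of the coupled viscous system \eqref{MFG2order}; the vanishing-viscosity construction only produces \emph{some} solution of \eqref{eq:MFGAs}, so an identity established for the viscous approximations does not transfer to the given pair. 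Even a decoupled regularization (smoothing each $u_i$ with $m_i$ frozen) does not remove the real difficulty, which is that $\overline u:=u_1-u_2$ is not a compactly supported test function, while the drift $(-v,D_vu_i)$ and $D_vu_i$ itself grow linearly in $v$; without some control of $m_i$ at infinity the integrations by parts, the cancellation of the transport terms and the finiteness of the three terms in your identity are not justified.

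The paper resolves precisely this point, and it is the one ingredient missing from your proposal: by Theorem~\ref{prp:m}, each $m_i(t)$ is the image of $m_0$ under the flow \eqref{dyn} driven by $u_i$; since by Lemma~\ref{L1} the Lipschitz constant of $u_i$ with respect to $v$ grows at most linearly, Gronwall's lemma shows that the trajectories issued from the compact support of $m_0$ remain in a fixed compact set, hence $m_i(t)$ is compactly supported uniformly in $t\in[0,T]$. This makes $\overline u$ an admissible test function in the continuity equations and renders all the integrals finite, after which the monotonicity argument of \cite{ll07} applies. Your ``viscosity-free alternative'' (pushforward representation plus optimal-synthesis regularity) points in the right direction, but you should make the compact-support conclusion explicit: it, rather than the differentiability of $u_i$ along optimal trajectories, is what legitimises the formal computation of $\Phi'$.
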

%
%
%
%

\section{The optimal control problem}\label{OC}
In this section, we tackle the optimal control problem related to equation~\eqref{eq:MFGAs}-(i) with a fixed $\overline m\in C([0,T];{\mathcal P}_1(\R^{2N}))$. To simplify the notation, we introduce the functions
\begin{equation}\label{ell}
\ell(x,v,t):=l(x,v)+F[\overline m(t)](x,v)\quad\textrm{and}\qquad g(x,v):=G[\overline m(T)](x,v),
\end{equation}
which, from the set assumptions $\rm{(H)}$, satisfy
\begin{equation}\label{HOC}
\|\ell(\cdot,\cdot,t)\|_{C^2},\, \|\ell(x,v,\cdot)\|_{C}, \, \|g\|_{C^2}\leq C\qquad \forall t\in[0,T], (x,v)\in\R^{2N}.
\end{equation}
With the new notation,  the optimal control  problem to be solved by a representative agent whose state at time $t$ is $(x,v)$ is to find the control law $\alpha$ in order to minimize
\begin{equation}\label{def:OC}
J_t(\xi,\eta, \alpha) =\displaystyle\int_t^T
\left[\frac{|\alpha|^2}{2}+\frac{|\eta|^2}{2}+\ell(\xi(s), \eta(s), s)\right]ds+g(\xi(T),\eta(T)),
\end{equation}
by following the trajectory~\eqref{eq:HJA}. Then the Cauchy problem given by ~\eqref{eq:MFGAs}-(i) and its terminal condition becomes
\begin{equation}\label{HJ}
\left\{\begin{array}{ll}
-\partial_t u-v\cdot D_xu+\frac1 2 \vert D_vu\vert^2-\frac1 2 \vert v \vert^2  -\ell(x,v,t)=0&  \textrm{in }\R^{2N}\times (0,T),\\
u(x,v,T)=g(x,v)& \textrm{on }\R^{2N}.
\end{array}\right.
\end{equation}
From~\eqref{def:OC}, it is obvious that the control~$\alpha$ must be chosen in~$L^2(t,T;\R^{N})$. Therefore, we can introduce the value function as follows:
\begin{definition} The value function for the cost $J_t$ defined in \eqref{def:OC} with dynamics~\eqref{eq:HJA} is
\begin{equation}\label{repr}u(x,v,t):=\inf\left\{ J_t(\xi,\eta, \alpha):\, (\xi,\eta, \alpha)\in \mathcal A(x,v,t)\right\}
\end{equation}
where
\begin{equation}
\label{eq:constraint}
{\mathcal A}(x,v,t)\!=\!\left\{ (  \xi, \eta, \alpha):\;\left|
  \begin{array}[c]{l} (\xi,\eta)\in AC([t,T]; \R^{2N}),\;\alpha\in L^2(t,T; \R^N),\\
 (\xi,\eta,\alpha) \textrm{ satisfy } \eqref{eq:HJA} \textrm{ and }\xi(t)= x, \eta(t)=v
  \end{array}\right.\right\}.    
\end{equation}
\end{definition}

\begin{lemma}\label{DPP} 
\begin{itemize}
\item[i)] {\it (Existence of an optimal control.)} For every $(x,v,t)\in\R^N\times\R^N\times (0,T)$, there exists an optimal control $\alpha^*$ for $u(x,v,t)$.
\item[ii)] {\it (Concatenation.)} Let $(\xi^*,\eta^*)$ be an optimal trajectory for $u(x,v,t)$ corresponding to the control law $\alpha^*$. For $r\in(t,T)$, let $(\tilde\xi^*,\tilde\eta^*)$ be an optimal trajectory for $u(\xi^*(r),\eta^*(r),r)$ with control $\tilde \alpha^*$. Then the concatenation of $\alpha^*$ and $\tilde\alpha^*$ at time $r$ is optimal for $u(x,v,t)$ and, moreover, 
\[
u(x,v,t)=u(\xi^*(r),\eta^*(r),r)+\int_t^r
\left[\frac{|\alpha^*|^2}{2}+\frac{|\eta^*|^2}{2}+\ell(\xi^*(s), \eta^*(s), s)\right]ds.
\]
\item[iii)] Under the same assumption as in point (ii), the control $\alpha^*_{\mid [r,T]}$ is optimal for \\ $u(\xi^*(r),\eta^*(r),r)$.
\item[iv)] {\it(Dynamic Programming Principle.)} The Dynamic Programming Principle holds, namely
\begin{equation*}
u(x,v,t)=\min_{(\xi,\eta,\alpha)\in{\mathcal A}(x,v,t)}\left\{
u(\xi(r),\eta(r),r)+\int_t^r\frac{|\alpha(s)|^2}{2}+\frac{|\eta(s)|^2}{2} + \ell(\xi(s),\eta(s),s)\, ds
\right\}.
\end{equation*}
\end{itemize}
\end{lemma}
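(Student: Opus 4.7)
For part (i), the plan is a direct weak compactness argument in $L^2(t,T;\R^N)$. Fix any admissible competitor (for instance $\alpha\equiv 0$) to see that $u(x,v,t)\le C$ is finite. Take a minimizing sequence $\alpha_n\in L^2(t,T;\R^N)$ with associated trajectories $(\xi_n,\eta_n)$. Because $\ell$ and $g$ are bounded (see \eqref{HOC}) and $|\alpha|^2/2$ appears in $J_t$, one has $\|\alpha_n\|_{L^2}^2\le 2J_t(\xi_n,\eta_n,\alpha_n)+C'\le C''$. Thus, up to a subsequence, $\alpha_n\rightharpoonup\alpha^*$ weakly in $L^2$. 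The bound $|\eta_n(s)|\le |v|+\|\alpha_n\|_{L^2}\sqrt{T-t}$ together with the Hölder estimate $|\eta_n(s_1)-\eta_n(s_2)|\le \|\alpha_n\|_{L^2}|s_1-s_2|^{1/2}$ and Arzelà--Ascoli give $\eta_n\to\eta^*$ uniformly on $[t,T]$; integrating, $\xi_n\to\xi^*$ uniformly as well, and the pair $(\xi^*,\eta^*)$ satisfies the dynamics \eqref{eq:HJA} with control $\alpha^*$. Lower semicontinuity of $J_t$ follows from the convexity of $|\cdot|^2$ (weak l.s.c.\ of $\int|\alpha_n|^2$), continuity of $g$, and the uniform convergence of $(\xi_n,\eta_n)$ combined with the continuity and boundedness of $\ell$ (dominated convergence). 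Hence $J_t(\xi^*,\eta^*,\alpha^*)\le \liminf J_t(\xi_n,\eta_n,\alpha_n)=u(x,v,t)$ and $\alpha^*$ is optimal.

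For part (ii), I will argue by the standard two-sided inequality. Define $\hat\alpha$ as the concatenation of $\alpha^*$ on $[t,r]$ and $\tilde\alpha^*$ on $[r,T]$, with its associated trajectory $(\hat\xi,\hat\eta)$ which is admissible from $(x,v)$ at time $t$ because $\tilde\xi^*(r)=\xi^*(r)$, $\tilde\eta^*(r)=\eta^*(r)$. Using the additivity of the integral in $J_t$,
\[
u(x,v,t)\le J_t(\hat\xi,\hat\eta,\hat\alpha)=\int_t^r\!\!\left[\tfrac12|\alpha^*|^2+\tfrac12|\eta^*|^2+\ell(\xi^*,\eta^*,s)\right]ds+u(\xi^*(r),\eta^*(r),r).
\]
For the reverse inequality, since $\alpha^*$ is optimal starting at $(x,v,t)$, splitting its cost at time $r$ gives
\[
u(x,v,t)=\int_t^r\!\!\left[\tfrac12|\alpha^*|^2+\tfrac12|\eta^*|^2+\ell(\xi^*,\eta^*,s)\right]ds+J_r(\xi^*,\eta^*,\alpha^*_{\mid[r,T]}),
\]
and the latter $J_r$ is $\ge u(\xi^*(r),\eta^*(r),r)$ by definition of the value function. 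Combining the two inequalities yields the claimed identity, and in particular $\hat\alpha$ is optimal from $(x,v,t)$.

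Part (iii) is then immediate: combining the identity of (ii) with the splitting of the cost of $\alpha^*$ along $[t,r]\cup[r,T]$ shows that $J_r(\xi^*,\eta^*,\alpha^*_{\mid[r,T]})=u(\xi^*(r),\eta^*(r),r)$, so $\alpha^*_{\mid[r,T]}$ realises the infimum defining $u(\xi^*(r),\eta^*(r),r)$. For part (iv), the DPP follows from (i)--(ii): the inequality ``$\le$'' is obtained by concatenating an arbitrary $(\xi,\eta,\alpha)\in\mathcal A(x,v,t)$ restricted to $[t,r]$ with an optimal control (which exists by (i)) from $(\xi(r),\eta(r),r)$; the inequality ``$\ge$'' follows by taking $(\xi^*,\eta^*,\alpha^*)$ optimal for $u(x,v,t)$ and invoking (ii), which shows the minimum in the DPP is attained.

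The main technical obstacle lies in part (i): the Hamiltonian is not coercive in the full momentum $(p_x,p_v)$, so one cannot rely on the classical coercivity-based existence results quoted in the introduction. The only coercivity available in the cost is the $\tfrac12|\alpha|^2$ term, and it is precisely that term that must be leveraged (via Hölder's inequality to control $\eta_n$ uniformly) to obtain the compactness necessary for the direct method; all subsequent parts reduce to routine dynamic-programming manipulations.
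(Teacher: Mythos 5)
Your proposal is correct and follows essentially the same route as the paper: part (i) is proved by the direct method with weak $L^2$ compactness of a minimizing sequence, uniform convergence of the trajectories and lower semicontinuity of $J_t$, exactly as in the paper's argument, while parts (ii)--(iv) are the standard concatenation/two-sided-inequality manipulations that the paper handles by reference to \cite[Proposition 5.1]{MMMT}. Your write-up simply spells out the details the paper leaves implicit.
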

\begin{proof}
{\sl(i)}: let $\{\alpha_n\}_n$ be a sequence of minimizing control laws  and $(\xi_n, \eta_n)$ be the solution of (\ref{eq:HJA}) corresponding to $\alpha_n$. Then, the boundedness of $\ell$ and the definition of $J_t$ ensure that $\|\alpha_n\|_{L^2(t,T;\R^N)}$ are uniformly bounded. Then, possibly after extracting a subsequence, $\alpha_n\rightharpoonup \alpha^*$ in $L^2(t,T;\R^N)$, $\eta_n\to \eta^*$ in $C([t,T];\R^N)$ and $\xi_n\to \xi^*$ in  $C^1([t,T];\R^N)$. The lower semi-continuity of $J_t$ yields that $\alpha^*$ is optimal.

Points {\sl(ii)}, {\sl(iii)} and {\sl(iv)} are obtained by arguing exactly as in \cite[Proposition 5.1]{MMMT} (points (1), (2) and (4) respectively), see also \cite{C}.
\end{proof}

\begin{lemma}\label{L1}
The value function~$u$ has the following properties:
\begin{enumerate}
\item (Lipschitz continuity in $x$ and local Lipschitz continuity in $v$) there exists a positive constant $C$, depending only on the constants in assumptions $\rm{(H)}$, such that
\begin{eqnarray*}
|u(x,v,t)-u(x',v,t)|&\leq& C|x-x'|\\ 
|u(x,v,t)-u(x,v',t)|&\leq& C(1+|v|+|v'|)|v-v'|
\end{eqnarray*}
for every $x,x',v,v'\in\R^N$, $t\in[0,T]$.
\item (Local Lipschitz continuity in $t$) there exists a positive constant $C$, depending only on the constants in assumptions $\rm{(H)}$, such that
\begin{eqnarray*}
|u(x,v,t)-u(x,v,t')|&\leq& C(1+|v|^2)|t-t'| \qquad \forall x,v\in\R^N,\,t,t'\in[0,T].
\end{eqnarray*}
\end{enumerate}
\end{lemma}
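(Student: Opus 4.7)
The plan is to compare the value $u$ at two nearby arguments by feeding suitably chosen admissible competitors into the minimisation defining $u$. A crucial preliminary step is an a priori bound on any optimal control $\alpha^*$ at $(x,v,t)$ (which exists by Lemma~\ref{DPP}(i)): comparing its cost with that of the trivial competitor $\alpha\equiv 0$, whose trajectory is $(x+(s-t)v,v)$, and using that $\ell$ and $g$ are bounded by $C$, one obtains
\[
\int_t^T \tfrac{1}{2}|\alpha^*(s)|^2\,ds\leq \tfrac{T}{2}|v|^2+C,
\]
so $\|\alpha^*\|_{L^2(t,T)}\leq C(1+|v|)$; Cauchy--Schwarz then yields $\|\eta^*\|_\infty\leq C(1+|v|)$ and $|\xi^*(s)-x|\leq C(1+|v|)(s-t)$.

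For item (1) I would combine these bounds with the regularity of $\ell$ and $g$ from $\rm{(H3)}$. To estimate $|u(x,v,t)-u(x',v,t)|$, feeding an optimal $\alpha^*$ of $u(x,v,t)$ into the dynamics starting at $(x',v)$ produces the shifted trajectory $(\xi^*+(x'-x),\eta^*)$, so the kinetic and control parts of the cost are unchanged and only the differences of $\ell$ and $g$ survive, each controlled by $C|x-x'|$. For the local Lipschitz in $v$, the same $\alpha^*$ from $(x,v',t)$ produces $\eta(s)=\eta^*(s)+(v'-v)$ and $\xi(s)=\xi^*(s)+(s-t)(v'-v)$; the expansion $|\eta|^2-|\eta^*|^2=2(v'-v)\cdot\eta^*+|v'-v|^2$ generates a cross term $2(v'-v)\cdot\int_t^T\eta^*\,ds$ which, thanks to $\|\eta^*\|_\infty\leq C(1+|v|)$, is bounded by $C(1+|v|)|v-v'|$. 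Symmetrising the roles of $v$ and $v'$ yields the stated factor $C(1+|v|+|v'|)$.

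For item (2), assume $t<t'$ without loss of generality. The inequality $u(x,v,t)\leq u(x,v,t')+C(1+|v|^2)(t'-t)$ is the easier one: take as competitor $\alpha\equiv 0$ on $[t,t']$, which steers $(x,v)$ to $(x+(t'-t)v,v)$, concatenated with an optimal control for $u(x+(t'-t)v,v,t')$; the added running cost on $[t,t']$ is at most $(\tfrac12|v|^2+C)(t'-t)$, while the Lipschitz-in-$x$ estimate of item (1) absorbs the extra displacement $(t'-t)v$. For the opposite inequality, apply the DPP of Lemma~\ref{DPP}(iv) at time $t'$ to the optimal triple $(\xi^*,\eta^*,\alpha^*)$ of $u(x,v,t)$ to obtain
\[
u(x,v,t')-u(x,v,t)\leq\bigl|u(x,v,t')-u(\xi^*(t'),\eta^*(t'),t')\bigr|-\int_t^{t'}\tfrac12|\alpha^*|^2\,ds+C(t'-t),
\]
and estimate the first right-hand term via item (1) with $|x-\xi^*(t')|\leq C(1+|v|)(t'-t)$ and $|v-\eta^*(t')|\leq\int_t^{t'}|\alpha^*|\,ds$.

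The main obstacle lies precisely in this last estimate: the non-coercivity of $\mathcal{H}$ in $(p_x,p_v)$ rules out any $L^\infty$ control on $\alpha^*$, so a naive use of item (1) would only produce a term of order $C(1+|v|)^2\sqrt{t'-t}$, which is not linear in $t'-t$. The remedy is to absorb it using the quadratic control cost already present in the identity: setting $A:=\int_t^{t'}\tfrac12|\alpha^*|^2\,ds$, Cauchy--Schwarz gives $|v-\eta^*(t')|\leq\sqrt{2A(t'-t)}$, and Young's inequality $ab\leq\tfrac{a^2}{2}+\tfrac{b^2}{2}$ splits the offending term as $C(1+|v|)|v-\eta^*(t')|\leq A+C(1+|v|)^2(t'-t)$; the $A$ contribution is cancelled by the $-A$ appearing in the displayed identity, leaving the desired linear-in-$(t'-t)$ bound with constant $C(1+|v|^2)$.
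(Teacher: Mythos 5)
Your argument is correct, and for item (1) it follows the paper's backbone exactly: reuse an optimal control of $u(x,v,t)$ from the shifted initial datum, exploit the affine shift relations of the trajectories, and let the Lipschitz bounds on $\ell,g$ plus a bound on the optimal velocity produce the factor $C(1+|v|+|v'|)$ after symmetrisation. Where you genuinely diverge is in the a priori estimate on the optimal control and, consequently, in item (2). The paper invokes the sup-norm bounds $\|\alpha^*\|_{C^1}+\|v^*\|_{C^1}\le C(1+|v|)$ of \eqref{stimaL8} in Corollary~\ref{coro:regularity} (a forward reference, obtained from the Pontryagin system), which makes item (2) immediate: with $|\alpha^*|\le C(1+|v|)$ pointwise, the concatenation identity of Lemma~\ref{DPP} directly gives $\int_t^{s}\frac12|\alpha^*|^2\le C(1+|v|)^2(s-t)$ and $|v(s)-v|+|x(s)-x|\le C(1+|v|)|s-t|$, hence $C(1+|v|^2)|s-t|$ in one stroke. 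You instead derive only the elementary energy bound $\|\alpha^*\|_{L^2}\le C(1+|v|)$ (comparison with $\alpha\equiv 0$), which suffices for item (1) via $\|\eta^*\|_\infty\le C(1+|v|)$, but forces you in item (2) to split into two one-sided inequalities: the zero-control competitor for one direction, and for the other the DPP identity in which the negative term $-\int_t^{t'}\frac12|\alpha^*|^2$ is kept and used to absorb, via Cauchy--Schwarz and Young, the term $C(1+|v|)\,|v-\eta^*(t')|$ coming from the local Lipschitz estimate in $v$; this absorption is exactly what replaces the missing $L^\infty$ bound on $\alpha^*$ and it does yield the linear-in-$|t-t'|$ bound with constant $C(1+|v|^2)$. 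So your proof is more self-contained (no appeal to the optimality-conditions machinery, hence no forward reference), at the price of a slightly more delicate argument in item (2); the paper's route is shorter because \eqref{stimaL8} is needed later anyway.
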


\begin{proof}  
  \begin{enumerate}
  \item Fix $t\in [0,T)$. 
Let $\alpha$ be an optimal control law for $u(x,v, t)$ i.e.,
\begin{equation}
\label{eq:HJ31}
u(x, v, t) = \int_t^T\frac12 |\alpha(s)|^2+\frac12 |v(s)|^2+\ell(x(s),v(s),s)\,ds+g(x(T),v(T)),
\end{equation}
where~$(x(\cdot),v(\cdot))$ obeys to the dynamics \eqref{eq:HJA}.

We consider the path $(y(\cdot), w(\cdot))$ starting from $(y, w)$, with control $\alpha(\cdot)$.
Hence, we obtain
\begin{eqnarray*}
y(s)&=&y+w(s-t)+\int_t^s\int_t^{\theta}\alpha(\tau) \,d\tau d\theta=y-x+x(s)+(w-v)(s-t),\\
w(s)&=&w+\int_t^s \alpha(\tau)\,d\tau=w-v+v(s).
\end{eqnarray*}
Note that 
\begin{equation}\label{diff}
v(s)-w(s)=v-w,\  x(s)-y(s)=x-y+(v-w)(s-t).
\end{equation}
The definition of the value function \eqref{repr} and relation \eqref{eq:HJ31} imply
\begin{eqnarray*}
u(y, w, t)&\leq& \int_t^T\frac12 |\alpha(s)|^2+\frac12 |w(s)|^2+
\ell(y(s),w(s), s)\,ds+g(y(T), w(T))\\
&\leq& u(x, v, t) -\int_t^T\frac12 |v(s)|^2-\ell(x(s),v(s), s)\,ds-g(x(T),v(T))\\
&&\quad +\int_t^T\frac12 |w(s)|^2+\ell(y(s),w(s), s)\,ds+g(y(T), w(T))\\
&\leq& 
u(x, v, t)+ \int_t^T L_{\ell}(|x(s)-y(s)|+|v(s)-w(s)|)\, ds \\
&&\quad+L_g(|x(T)-y(T)|+|v(T)-w(T)|)+ \int_t^T\frac12 (|w(s)|^2-|v(s)|^2)ds,
\end{eqnarray*}
where  $L_\ell$ and $L_g$ denote respectively the Lipschitz constants of~$\ell$ and $g$ w.r.t. $(x,v)$.
Hence, by \eqref{diff},
\begin{eqnarray*}
&&\int_t^T\frac12 (|w(s)|^2-|v(s)|^2)ds= \int_t^T\frac12 |w-v|\cdot|w(s)+v(s)|ds\leq\\
&&|w-v|\int_t^T|w+v+2\int_t^s\alpha(\tau)d\tau|ds\leq 
C|w-v|(|w|+|v|+ 1),
\end{eqnarray*}
where the last inequality comes from \eqref{stimaL8} of Corollary \ref{coro:regularity}  below.
Hence we obtain
\begin{eqnarray}\label{eq:9}
&&u(y, w, t)\leq  
u(x, v, t)+ C|x-y|+K(v,w)|v-w|,
\end{eqnarray}
where $K(v,w)=C(|w|+|v|+ 1)$. 
Reverting the roles of $(x,v)$ and $(y,w)$, we get the first result.

\medskip

\item
We fix $(x,v)$.
From the concatenation property of optimal trajectories established in Lemma \ref{DPP}, if $\alpha$ is optimal for $u(x,v,t)$ and $(x(s), v(s))$ 
is the associated optimal trajectory, then
$$u(x,v,t)=u(x(s), v(s), s)+\int_t^s\frac12 |\alpha(r)|^2+\frac12 |v(r)|^2+\ell(x(r),v(r),r)\,dr$$
for any $s\in[t,T]$.
Then
\begin{eqnarray*}
&&|u(x,v,t)-u(x,v,s)|\leq |u(x,v,t)-u(x(s),v(s), s)|+
|u(x(s), v(s), s)-u(x,v,s)|\\
&&\leq 
\int_t^s\frac12 |\alpha(r)|^2+\frac12 |v(r)|^2+|\ell(x(r),v(r),r)|\,dr+ L|x(s)-x|+L(v)|v(s)-v|,
\end{eqnarray*}
where the last two terms come from the Lipschitz continuity of $u$ w.r.t. $(x,v)$: $L$ is the Lipschitz constant of $u$ with respect to $x$ and $L(v)$ is 
a local Lipschitz constant of $u$ with respect to $v$.

From \eqref{eq:HJA} and the bound~\eqref{stimaL8} in Corollary \ref{coro:regularity}  below, 
we get the bounds for $x(s)$, $v(s)$ and $\alpha$, i.e.
$|v(s)-v|+|x(s)-x|\leq C(1+|v|)|s-t|$,  hence 
\begin{equation}\label{L2}
|u(x,v,t)-u(x,v,s)|\leq C(1+|v|^2)|s-t|,
\end{equation}
which ends the proof. 
\end{enumerate}
\end{proof}
\begin{proposition}\label{exuniq}
The value function  defined in \eqref{repr} is the unique viscosity solution to~\eqref{HJ} with an at most quadratic growth in $(x,v)$. Moreover, there exists a positive constant~$C$ such that
\begin{equation}\label{eq:stimau}
-C \leq u(x,v,t)\leq C(1+\vert v\vert^2) \qquad \forall(x,v,t)\in\R^N\times \R^N\times[0,T].
\end{equation}
\end{proposition}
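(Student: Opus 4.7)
My plan is to establish the bounds \eqref{eq:stimau} directly from the representation \eqref{repr}, verify that $u$ solves \eqref{HJ} in the viscosity sense via the dynamic programming principle from Lemma \ref{DPP}, and finally prove uniqueness within the quadratic-growth class via a comparison principle.

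For the bounds, I would test the infimum \eqref{repr} with the admissible control $\alpha\equiv 0$, whose associated trajectory from $(x,v)$ at time $t$ is $\eta(s)\equiv v$ and $\xi(s)=x+(s-t)v$. Using \eqref{HOC} in \eqref{def:OC}, the corresponding cost is bounded by
\[
\tfrac{1}{2}(T-t)|v|^2 + (T-t)\|\ell\|_\infty + \|g\|_\infty \leq C(1+|v|^2),
\]
which yields the upper bound. The lower bound follows by discarding the nonnegative integrand $\tfrac{1}{2}(|\alpha|^2+|\eta|^2)$ and applying $\ell,g\geq -C$, giving $u\geq -C(1+T)$.

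For the viscosity solution property, Lemma \ref{L1} ensures $u\in W^{1,\infty}_{\rm loc}(\R^{2N}\times[0,T])$. For the subsolution inequality at a smooth test function $\phi$ touching $u$ from above at $(x_0,v_0,t_0)$, I plug every constant control $\alpha\equiv\bar\alpha$ into the DPP (Lemma \ref{DPP}-iv), divide by $h>0$, let $h\to 0^+$, and take the supremum over $\bar\alpha\in\R^N$ of $\{-\bar\alpha\cdot D_v\phi - \tfrac{1}{2}|\bar\alpha|^2\}=\tfrac{1}{2}|D_v\phi|^2$, recovering the Hamiltonian of \eqref{HamA}. For the supersolution inequality at a test function touching $u$ from below, I use the optimal trajectory $(\xi^*,\eta^*,\alpha^*)$ provided by Lemma \ref{DPP}-i on $[t_0,t_0+h]$, divide the DPP relation by $h$, pass to the limit using the $L^2$ bound on $\alpha^*$ and Lebesgue differentiation, then bound $-\alpha^*(t_0)\cdot D_v\phi - \tfrac{1}{2}|\alpha^*(t_0)|^2\leq \tfrac{1}{2}|D_v\phi|^2$ by Young's inequality to recover $H$.

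The main obstacle is uniqueness. Since $\mathcal{H}$ is linear unbounded in $p_x$, quadratic in $p_v$, and lacks coercivity, and since the domain is $\R^{2N}$ with solutions growing quadratically in $v$, I plan to prove a comparison principle by doubling of variables with a penalization adapted to the growth. Given a viscosity subsolution $u_1$ and supersolution $u_2$ both of at most quadratic growth, I consider
\[
\Phi_{\varepsilon,\beta}(x,v,y,w,t) = u_1(x,v,t) - u_2(y,w,t) - \tfrac{1}{2\varepsilon}\bigl(|x-y|^2+|v-w|^2\bigr) - \beta\,\chi(x,v,y,w),
\]
where $\chi$ is a smooth weight of slightly super-quadratic growth ensuring attainment of the supremum. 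At the maximizer $(x_\varepsilon,v_\varepsilon,y_\varepsilon,w_\varepsilon,t_\varepsilon)$ the Crandall--Ishii lemma produces matching jets, and the standard estimate $(|x_\varepsilon-y_\varepsilon|^2+|v_\varepsilon-w_\varepsilon|^2)/\varepsilon\to 0$ makes the non-coercive transport contribution $-(v-w)\cdot(x-y)/\varepsilon$ infinitesimal; the quadratic term $\tfrac{1}{2}|p_v|^2$ yields a favorable sign by convexity, and the Lipschitz regularity of $\ell,g$ from \eqref{HOC} controls the residual terms. The delicate point is the choice of $\chi$ so that the additional gradient contributions $\beta D_v\chi$ entering via the quadratic part of $H$ remain negligible relative to the $\beta\chi$ weight: this is possible thanks to the explicit quadratic form of $H$ and the boundedness of $\ell,g$. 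Letting $\varepsilon\to 0$ and then $\beta\to 0$ yields $u_1\leq u_2$, hence uniqueness; combined with the existence of a viscosity solution with the growth bound \eqref{eq:stimau}, this concludes the proof.
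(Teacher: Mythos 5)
Your treatment of the bound \eqref{eq:stimau} (testing \eqref{repr} with $\alpha\equiv 0$ and using \eqref{HOC}) and of the viscosity property (continuity from Lemma \ref{L1} plus the DPP of Lemma \ref{DPP}-(iv), with constant controls for the subsolution inequality and an optimal control for the supersolution one) is correct and coincides with what the paper does. The divergence is in the uniqueness part: the paper does not prove a comparison principle at all, it invokes the one of Da Lio and Ley \cite[Theorem 2.1]{DLL}, which is tailored precisely to Hamiltonians of this control-theoretic form (linear unbounded drift, quadratic cost) within the class of functions with quadratic growth. You propose instead to prove comparison from scratch by doubling of variables with a super-quadratic weight, and it is exactly there that your argument has a genuine gap.

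Concretely, the step ``the additional gradient contributions $\beta D_v\chi$ entering via the quadratic part of $H$ remain negligible relative to the $\beta\chi$ weight'' is asserted, not proved, and as written it fails. At the doubled maximum the gradients are $p_v^{1,2}=\pm\frac{v_\e-w_\e}{\e}+O(\beta D_v\chi)$ and $p_x^{1,2}=\pm\frac{x_\e-y_\e}{\e}+O(\beta D_x\chi)$, so beyond the harmless cross term $(v_\e-w_\e)\cdot(x_\e-y_\e)/\e$ you must absorb (i) the transport contribution $\beta\, v_\e\cdot D_x\chi$, where $v_\e$ is unbounded, and (ii) the mixed term $\beta D_v\chi\cdot\frac{v_\e-w_\e}{\e}$, where $\frac{|v_\e-w_\e|}{\e}$ is controlled only by the \emph{local} Lipschitz constant of $u$ in $v$, which by Lemma \ref{L1} grows linearly in $|v_\e|$, and (iii) the difference $\frac12|w_\e|^2-\frac12|v_\e|^2$ with unbounded $|v_\e+w_\e|$. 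If $\chi$ grows like $|v|^{2+\delta}+|x|^{2+\delta}$, all of these terms are of the \emph{same} order as $\beta\chi$ itself (not smaller), so they cannot be dismissed by the boundedness of $\ell$ and $g$; convexity of $\frac12|p_v|^2$ alone gives no sign when you subtract the sub- and supersolution inequalities at different gradients. Closing this requires extra structure, e.g.\ a time-dependent (exponential) factor making the weight a strict supersolution of the linearized equation, or the convexity/interpolation device used in \cite{DLL}; this is the actual content of the cited theorem. Either carry out such an argument in detail or, as the paper does, quote \cite[Theorem 2.1]{DLL} after checking that \eqref{HJ} and the growth class fit its hypotheses.
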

\begin{proof}
Let us first establish that the value function fulfills \eqref{eq:stimau} and solves~\eqref{HJ} in the viscosity sense. Actually, taking $\alpha \equiv 0$ in \eqref{eq:HJA}, we get $\eta(s)=v$ and $\xi(s)=x+v(s-t)$; then, thanks to the boundedness of $\ell$ in~\eqref{HOC}, the value function verifies~\eqref{eq:stimau}.
Moreover, by Lemma~\ref{L1}, it is also continuous; hence, using the DPP in Lemma~\ref{DPP}-(iv), it is also a solution to \eqref{HJ}.\\
The uniqueness part of the statement is an immediate consequence of the comparison principle stated in \cite[Theorem 2.1]{DLL}.
\end{proof}

The following lemma deals with the semi-concavity of $u(x,v, t)$ w.r.t. $(x,v)$:
\begin{lemma}\label{semi-concav}
Under Hypothesis $\rm{(H)}$, $u(x,v,t)$ is semi-concave w.r.t. $(x,v)$ with a linear modulus of semi-concavity, which depends only 
 on the constants in assumptions $\rm{(H)}$.
\end{lemma}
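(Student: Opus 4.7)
The plan is to use the standard convex–perturbation argument for value functions: at the midpoint $(\bar x,\bar v):=\bigl((x_1+x_2)/2,(v_1+v_2)/2\bigr)$ invoke the existence of an optimal control provided by Lemma~\ref{DPP}-(i), then use the same control as a competitor for the two ``outer'' initial conditions. Since the only non-convex object in the problem is the running cost $\ell$ (and the terminal cost $g$), semi-concavity will follow from their $C^{2}$-bounds in \eqref{HOC}, once we observe that the quadratic parts of the Lagrangian $\frac12|\alpha|^2+\frac12|v|^2$ behave \emph{exactly} convexly along this particular perturbation, so that no $|v|$-dependent constants show up.

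Concretely, pick $\alpha^{*}\in L^{2}(t,T;\R^{N})$ optimal for $u(\bar x,\bar v,t)$ and let $(\xi^{*},\eta^{*})$ be the associated trajectory. For $i=1,2$, let $(\xi_{i},\eta_{i})$ be the trajectory of \eqref{eq:HJA} starting from $(x_{i},v_{i})$ and driven by the same $\alpha^{*}$. A direct integration gives
\[
\eta_{i}(s)-\eta^{*}(s)=v_{i}-\bar v,\qquad \xi_{i}(s)-\xi^{*}(s)=(x_{i}-\bar x)+(v_{i}-\bar v)(s-t),
\]
so that $(\xi^{*},\eta^{*})=\tfrac12\bigl((\xi_{1},\eta_{1})+(\xi_{2},\eta_{2})\bigr)$. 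Since $u(x_{i},v_{i},t)\le J_{t}(\xi_{i},\eta_{i},\alpha^{*})$ and $2u(\bar x,\bar v,t)=2J_{t}(\xi^{*},\eta^{*},\alpha^{*})$, the $|\alpha^{*}|^2$ terms cancel identically, and the parallelogram identity yields
\[
\int_{t}^{T}\!\Bigl[\tfrac12|\eta_{1}|^{2}+\tfrac12|\eta_{2}|^{2}-|\eta^{*}|^{2}\Bigr]\,ds=\int_{t}^{T}\tfrac14|\eta_{1}-\eta_{2}|^{2}\,ds=\tfrac{T-t}{4}|v_{1}-v_{2}|^{2},
\]
with no dependence on $\bar v$.

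For the $\ell$ and $g$ contributions, the $C^{2}$-bound $\|\ell(\cdot,\cdot,s)\|_{C^{2}},\|g\|_{C^{2}}\le C$ of \eqref{HOC} gives for any $C^2$ function $f$ with $\|D^2f\|_\infty\le C$ the standard inequality $f(a)+f(b)-2f\bigl(\tfrac{a+b}{2}\bigr)\le \tfrac{C}{4}|a-b|^{2}$. Applying this to $\ell(\cdot,\cdot,s)$ at $(\xi_{1}(s),\eta_{1}(s))$, $(\xi_{2}(s),\eta_{2}(s))$ and to $g$ at $(\xi_{i}(T),\eta_{i}(T))$, together with $|\xi_{1}-\xi_{2}|\le |x_{1}-x_{2}|+T|v_{1}-v_{2}|$ and $|\eta_{1}-\eta_{2}|=|v_{1}-v_{2}|$, yields a bound by a multiple of $|x_{1}-x_{2}|^{2}+|v_{1}-v_{2}|^{2}$. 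Summing all contributions,
\[
u(x_{1},v_{1},t)+u(x_{2},v_{2},t)-2u(\bar x,\bar v,t)\le C\bigl(|x_{1}-x_{2}|^{2}+|v_{1}-v_{2}|^{2}\bigr),
\]
where $C$ depends only on $T$ and on the constants appearing in assumptions~(H); this is exactly semi-concavity with a linear modulus.

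The only subtle point, and hence the main thing to watch, is the possibility that the velocity-dependent term $\frac12|v|^{2}$ produces a modulus that deteriorates with $|v|$, as it did for the Lipschitz estimate in Lemma~\ref{L1}. The observation above is that along this specific ``same-control'' perturbation the cross term $\bar v\cdot\int_t^s\alpha^{*}\,d\tau$ cancels in the parallelogram identity, so only the \emph{increment} $v_{1}-v_{2}$ remains and the constant stays uniform in $(x,v)$.
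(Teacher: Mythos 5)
Your proposal is correct and follows essentially the same argument as the paper's proof: reuse the optimal control of the intermediate point as a competitor for the two outer initial data, compute the kinetic contribution exactly (the paper's version with general $\lambda$ gives $\tfrac12\lambda(1-\lambda)|v-w|^2(T-t)$, which is your parallelogram identity at $\lambda=\tfrac12$), and control the $\ell$ and $g$ terms via their uniform $C^2$ bounds through a second-order Taylor expansion. The only cosmetic difference is that you prove the midpoint inequality rather than the inequality for all convex combinations, which is equivalent here since $u(\cdot,\cdot,t)$ is continuous (Lemma~\ref{L1}) and the estimate $u(x_1,v_1,t)+u(x_2,v_2,t)-2u(\bar x,\bar v,t)\le C\left(|x_1-x_2|^2+|v_1-v_2|^2\right)$ is the standard characterization of semi-concavity with a linear modulus.
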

\begin{proof}
For any $(x,v)$, $(y, w)$ and $\lambda\in[0,1]$, 
consider $x_{\lambda}:=\lambda x+(1-\lambda)y$, $v_{\lambda}:=\lambda v+(1-\lambda)w$.

Let $\alpha$ be an optimal control for~$u(x_{\lambda}, v_{\lambda}, t)$; hence, the associated trajectory is
\begin{equation}\label{icslanda}
x_{\lambda}(s)=x_{\lambda}+ v_{\lambda}(s-t) +\int_t^s\int_t^{\theta}\alpha(\tau) \,d\tau d\theta,\ 
v_{\lambda}(s)= v_{\lambda}+ \int_t^s \alpha(\tau)\,d\tau
\end{equation}
and 
$$u(x_{\lambda}, v_{\lambda}, t)=\int_t^T\frac12 |\alpha(s)|^2+\frac12 |v_{\lambda}(s)|^2+
\ell(x_{\lambda}(s),v_{\lambda}(s),s)ds+ g(x_{\lambda}(T), v_{\lambda}(T)).$$

Let $(x(s), v(s))$ be the trajectory starting at $(x,v)$ at time $t$ with control $\alpha$ and $(y(s), w(s))$ the trajectory starting at $(y,w)$ at time $t$ still with control $\alpha$.

We have to estimate 
\begin{displaymath}
  \begin{split}
    &\lambda u(x,v, t) +(1-\lambda)u(y,w, t)-u(x_{\lambda}, v_{\lambda}, t)\\
    \le &\int_t^T\frac12\lambda |v(s)|^2+ (1-\lambda)\frac12 |w(s)|^2-\frac12 |v_{\lambda}(s)|^2 ds\\
    &+\int_t^T\lambda \ell(x(s),v(s),s)+(1-\lambda) \ell(y(s),w(s),s)- \ell(x_{\lambda}(s),v_{\lambda}(s),s)ds\\
    &+\lambda g(x(T), v(T))+(1-\lambda)g(y(T), w(T))-g(x_{\lambda}(T), v_{\lambda}(T)).
  \end{split}
\end{displaymath}
Since 
\begin{equation}\label{dintutte}
v(s)=v+ \int_t^s \alpha(\tau)\,d\tau,\  w(s)=w+ \int_t^s \alpha(\tau)\,d\tau,\  v_{\lambda}(s)=\lambda v+(1-\lambda)w +\int_t^s \alpha(\tau)\,d\tau,
\end{equation}
we get
\begin{equation}
  \begin{split}
    &\lambda \frac12|v(s)|^2+ (1-\lambda)\frac12 |w(s)|^2-\frac12 |v_{\lambda}(s)|^2\label{eqsemi} \\
=&(\lambda v+(1-\lambda)w-\lambda v-(1-\lambda)w)\int_t^s \alpha(\tau)\,d\tau+ \lambda \frac{|v|^2}{2}+ (1-\lambda)\frac{|w|^2}{2}-\frac12|\lambda v+ (1-\lambda)w|^2\\
=&
\frac12\lambda(1-\lambda)|v|^2+ \frac12\lambda(1-\lambda)|w|^2-\lambda(1-\lambda)v\cdot w= 
\frac12\lambda(1-\lambda)|v-w|^2.
  \end{split}
\end{equation}
Hence
\begin{equation}\label{unouno}
  \int_t^T\frac12\lambda |v(s)|^2+ (1-\lambda)\frac12 |w(s)|^2-\frac12 |v_{\lambda}(s)|^2 ds=\frac12\lambda(1-\lambda)|v-w|^2(T-t).
\end{equation}
Now, we have to estimate the terms $\lambda \ell(x(s), v(s), s) +(1-\lambda)\ell(y(s), w(s), s)- \ell(x_{\lambda}(s),v_{\lambda}(s),s)$ and 
$\lambda g(x(T), v(T))+(1-\lambda)g(y(T), w(T))-g(x_{\lambda}(T), v_{\lambda}(T))$.
We write the algebra for the second term, since the treatment of the first term is similar.
The Taylor expansion of $g$ centered at  $(x_{\lambda}(T), v_{\lambda}(T))$ gives
\begin{equation}
\label{taypro}
g(x(T),v(T))= g(x_{\lambda}(T), v_{\lambda}(T))+  Dg(x_{\lambda}(T), v_{\lambda}(T))(x(T)-x_{\lambda}(T), v(T)-v_{\lambda}(T))+ R_1,
\end{equation}
where $R_1$ is the remaining term in the expansion, namely
\begin{equation}
  \label{R1}
R_1=\frac 1 2 (x(T)-x_{\lambda}(T), v(T)-v_{\lambda}(T))D^2g(\xi_1,\eta_1)(x(T)-x_{\lambda}(T), v(T)-v_{\lambda}(T))^T,
\end{equation}
for suitable $\xi_1, \eta_1$.\\
From \eqref{icslanda} and \eqref{dintutte}, we get
\begin{equation}
  \label{reltutte}
  \begin{array}[c]{rcl}
    x(s)-x_{\lambda}(s)&=& (1-\lambda)((x-y)+(v-w)(s-t)),\\
 v(s)-v_{\lambda}(s) &=& (1-\lambda)(v-w),\\
y(s)-x_{\lambda}(s)&=& \lambda((y-x)+(w-v)(s-t)),\\
 w(s)-v_{\lambda}(s) &=& \lambda(w-v),
  \end{array}
\end{equation}
hence the error term can be written as 
\begin{equation}\label{R12}
R_1= \frac 1 2(1-\lambda)^2(x-y+(v-w)(T-t), v-w)D^2g(\xi_1,\eta_1)(x-y+(v-w)(T-t), v-w)^T.
\end{equation}
Similarly
\begin{displaymath}
  g(y(T), w(T))=
g(x_{\lambda}(T), v_{\lambda}(T))+  Dg(x_{\lambda}(T), v_{\lambda}(T))(y(T)-x_{\lambda}(T), w(T)-v_{\lambda}(T))+ R_2,
\end{displaymath}
where
\begin{displaymath}
  \begin{split}
   R_2=
& \frac 1 2 (y(T)-x_{\lambda}(T), w(T)-v_{\lambda}(T))D^2g(\xi_2,\eta_2)(y(T)-x_{\lambda}(T), w(T)-v_{\lambda}(T))^T\\
=&\frac 1 2 \lambda^2 (y-x+(w-v)(T-t), w-v)D^2g(\xi_2,\eta_2)(y-x+(w-v)(T-t), w-v)^T.
  \end{split}
\end{displaymath}
At this point, taking into account that
 from \eqref{reltutte},
\begin{equation}
  \label{zeroDg}
  \begin{split}
&\lambda Dg(x_{\lambda}(T), v_{\lambda}(T))(x(T)-x_{\lambda}(T), v(T)-v_{\lambda}(T))\\
&+(1-\lambda) Dg(x_{\lambda}(T), v_{\lambda}(T))(y(T)-x_{\lambda}(T), w(T)-v_{\lambda}(T))\\
=& Dg(x_{\lambda}(T), v_{\lambda}(T))(\lambda(x(T)-x_{\lambda}(T))\\ &\ds +(1-\lambda)(y(T)-x_{\lambda}(T)),
\lambda(v(T)-v_{\lambda}(T))+(1-\lambda)(w(T)-v_{\lambda}(T)))\\
=&0,
\end{split}
\end{equation}
we obtain that 
\begin{equation}\label{gg}
  \begin{array}[c]{ll}
&\ds  \lambda g(x(T), v(T))+(1-\lambda)g(y(T), w(T))-g(x_{\lambda}(T), v_{\lambda}(T))\\ =&     \lambda R_1+
(1-\lambda) R_2\\
\le &\ds (1-\lambda)\lambda C_T\|D^2g\|_{\infty}(|x-y|^2+|v-w|^2).
  \end{array}
\end{equation}
Hence from \eqref{unouno},  \eqref{zeroDg}, \eqref{gg} we get
\begin{displaymath}
  \begin{split}
    &\lambda u(x,v, t) +(1-\lambda)u(y,w, t)-u(x_{\lambda}, v_{\lambda}, t)\\
\le &\frac{\lambda(1-\lambda)}2 |v-w|^2(T-t)+
C_T (1-\lambda)\lambda  \left(\|D^2g\|_{\infty} +\|D^2\ell\|_{\infty}\right) \left(|x-y|^2+|v-w|^2\right).
  \end{split}
\end{displaymath}
We obtain that $u$ is semi-concave in $(x,v)$ with a linear modulus of semi-concavity.
\end{proof}
Pontryagin's maximum principle yields the following necessary optimality conditions:
\begin{proposition}[Necessary conditions for optimality]\label{prop:pontriagin}
\label{MPP}
Let $(x^*, v^*, \alpha^*)$ be optimal for~$u(x,v,t)$ in~\eqref{repr}. There exists an arc $p=(p_x,p_v)\in AC([t,T];\R^N\times\R^N)$, hereafter called the costate,  such that
\begin{enumerate}
\item $(\alpha^*, x^*, v^*, p)$ satisfies
the {\it adjoint equations}: for a.e. $s\in[t,T]$,
\begin{eqnarray}
&&p_x' =D_x\ell( x^*, v^*, s),\label{tag:adjoint1}\\
&&p_v'=-p_x+v^*+D_v\ell( x^*, v^*, s),\label{tag:adjoint2}
\end{eqnarray}
the {\it transversality condition}
\begin{equation}\label{tag:transversality}
p(T)=-D g(x^*(T), v^*(T)),
\end{equation}
together with the {\it maximum condition}: for almost all $s\in [t,T]$,
\begin{multline}\label{tag:max}
\max_{\alpha}p_x\cdot v^*+p_v\cdot\alpha-\dfrac{|\alpha|^2}2-\dfrac{|v^*|^2}2=
p_x\cdot v^*+p_v\cdot\alpha^*-\dfrac{|\alpha^*|^2}2-\dfrac{|v^*|^2}2.\end{multline}
\item The optimal control $\alpha^*$ is given by
\begin{equation}
\alpha^*=p_v, \text{  a.e in }[t,T].\label{tag:alpha*}
\end{equation}
\item The triple~$(x^*, v^*, p)$ satisfies the system of differential equations: for a.e. $s\in[t,T]$
\begin{eqnarray}
&&x'=  v,\label{tag:1} \\
&&v'= p_v, \label{tag:2}\\
&&p_x'= D_x\ell(x,v, s),\label{tag:3}\\
&&p_v'=-p_x+v+D_v\ell(x,v, s),\label{tag:4}
\end{eqnarray}
with the mixed boundary conditions $x^*(t)= x$, $v^*(t)= v$, $p(T)=-D g(x^*(T), v^*(T))$.
\end{enumerate}
\end{proposition}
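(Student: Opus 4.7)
The plan is to derive the optimality conditions via a direct first-order variational argument, exploiting the fact that the control set is the whole of $L^2(t,T;\R^N)$ (no pointwise constraints on $\alpha$) and that $\ell$, $g$ are $C^2$ with bounded derivatives by $\rm{(H2)}$--$\rm{(H3)}$. Since $\alpha\mapsto (x,v)$ is affine and the cost $J_t$ is the sum of a quadratic and a $C^2$--Nemytskii term, $J_t$ is G\^ateaux differentiable on $L^2(t,T;\R^N)$, and minimality of $\alpha^*$ yields $dJ_t(\alpha^*)[\beta]=0$ for every $\beta\in L^2(t,T;\R^N)$.

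First, I would compute the linearised state. For $\beta\in L^2(t,T;\R^N)$ and $\varepsilon\in\R$, let $\alpha_\varepsilon:=\alpha^*+\varepsilon\beta$ and let $(x_\varepsilon,v_\varepsilon)$ be the corresponding trajectory of \eqref{eq:HJA}. Because the dynamics is linear in the state, the variations $\tilde v(s)=\int_t^s\beta(\tau)d\tau$ and $\tilde x(s)=\int_t^s\tilde v(\tau)d\tau$ satisfy $\tilde x'=\tilde v$, $\tilde v'=\beta$, $\tilde x(t)=\tilde v(t)=0$, and one checks by dominated convergence (using $\|D\ell\|_\infty, \|D g\|_\infty\le C$) that
\[
0\;=\;\left.\frac{d}{d\varepsilon}\right|_{\varepsilon=0}\!J_t(\alpha_\varepsilon)
=\int_t^T\!\bigl[\alpha^*\!\cdot\beta+v^*\!\cdot\tilde v+D_x\ell(x^*,v^*,s)\cdot\tilde x+D_v\ell(x^*,v^*,s)\cdot\tilde v\bigr]ds+Dg(x^*(T),v^*(T))\!\cdot\!(\tilde x(T),\tilde v(T)).
\]

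Next, I would introduce the costate $p=(p_x,p_v)$ as the unique $AC$ solution of the linear backward ODE system \eqref{tag:adjoint1}--\eqref{tag:adjoint2} with terminal condition \eqref{tag:transversality}; existence and absolute continuity are immediate since the right-hand sides are bounded (by $(H3)$ and the already-established boundedness of $v^*$, cf.\ Corollary~\ref{coro:regularity}). Integration by parts, using $\tilde x(t)=\tilde v(t)=0$, gives
\begin{align*}
p_x(T)\tilde x(T)&=\int_t^T\bigl(p_x'\tilde x+p_x\tilde v\bigr)ds=\int_t^T\bigl(D_x\ell\cdot\tilde x+p_x\cdot\tilde v\bigr)ds,\\
p_v(T)\tilde v(T)&=\int_t^T\bigl(p_v'\tilde v+p_v\beta\bigr)ds=\int_t^T\bigl((-p_x+v^*+D_v\ell)\cdot\tilde v+p_v\cdot\beta\bigr)ds.
\end{align*}
Adding, using $p(T)=-Dg(x^*(T),v^*(T))$, and comparing with the first-order condition above, all terms involving $\tilde x,\tilde v$ cancel and one is left with
\[
\int_t^T (\alpha^*-p_v)\cdot\beta\,ds=0\qquad\forall\beta\in L^2(t,T;\R^N),
\]
hence $\alpha^*=p_v$ a.e., which is \eqref{tag:alpha*}. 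Point (3) is then just the rewriting of \eqref{eq:HJA} together with \eqref{tag:adjoint1}--\eqref{tag:adjoint2} after substituting $\alpha^*=p_v$, and the maximum condition \eqref{tag:max} follows by observing that $\alpha\mapsto p_x\cdot v^*+p_v\cdot\alpha-|\alpha|^2/2-|v^*|^2/2$ is a strictly concave quadratic whose unique maximiser is precisely $\alpha=p_v=\alpha^*$.

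The only non-routine step is the rigorous justification of differentiating $J_t$ under the integral sign, and I expect this to be the main (though mild) obstacle: it requires a uniform $L^2$--bound on $\alpha_\varepsilon$ together with uniform bounds on $v_\varepsilon$ (guaranteed by Corollary~\ref{coro:regularity}) so that, thanks to $\ell,g\in C^2$ with bounded second derivatives, dominated convergence applies to the difference quotients. Once this is in place, the remainder of the argument is a direct integration by parts.
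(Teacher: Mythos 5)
Your argument is correct, but it proceeds along a genuinely different route from the paper. The paper does not differentiate the cost at all: it verifies the hypotheses of Clarke's maximum principle for problems with unbounded controls (\cite[Hypothesis 22.16, Theorem 22.17]{Cla}), obtains normality as in \cite[Corollary 22.3]{Cla} because the terminal state is free, and only then extracts \eqref{tag:alpha*} from the maximum condition \eqref{tag:max} by differentiating the concave quadratic in $\alpha$. You instead exploit the special structure of the problem (no control constraints, dynamics affine in $(\xi,\eta,\alpha)$, cost quadratic plus a $C^2$-bounded Nemytskii term) to run a direct first-order variation in $L^2$: you \emph{define} the costate as the solution of the backward linear system \eqref{tag:adjoint1}--\eqref{tag:adjoint2} with terminal datum \eqref{tag:transversality}, and the integration by parts against the linearised state turns the stationarity of $J_t$ into $\alpha^*=p_v$ a.e., after which \eqref{tag:max} and point (3) are immediate. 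Your route is more elementary and self-contained (in particular the normal/abnormal multiplier issue never arises), at the price of working only because the control is unconstrained and the running cost is smooth and convex in $\alpha$; the paper's route is shorter given the external reference and is robust to less structured data. One caveat: you invoke Corollary~\ref{coro:regularity} both for the boundedness of $v^*$ and for the bounds on $v_\varepsilon$, but that corollary is deduced in the paper from Proposition~\ref{prop:pontriagin}, so citing it here would be circular --- and for $\varepsilon\neq 0$ the perturbed trajectory is not optimal, so it would not even apply. Fortunately neither use is needed: $v^*$ is continuous, hence bounded, on the compact interval $[t,T]$ since $\alpha^*\in L^2\subset L^1$, which makes the adjoint system's right-hand side integrable, and $v_\varepsilon=v^*+\varepsilon\tilde v$, $\alpha_\varepsilon=\alpha^*+\varepsilon\beta$ explicitly, so the dominated-convergence bounds for the difference quotients are trivial. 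With those citations replaced by these elementary observations, your proof is complete.
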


\begin{proof} 1. Hypothesis~\eqref{HOC} ensures that our control problem satisfies the assumption~\cite[Hypothesis 22.16]{Cla},
 so we can invoke~\cite[Theorem 22.17]{Cla} on the maximum principle for problems with unbounded control.
Moreover, since there is no constraint on the state variable at $T$, the same arguments as in ~\cite[Corollary 22.3]{Cla}
 ensure that the necessary conditions hold in normal form.

2. The maximum condition \eqref{tag:max} implies that
\[D_{\alpha}\left(p_x\cdot v^*+p_v\cdot \alpha-\dfrac{|\alpha|^2}2-\dfrac{|v^*|^2}2-f(x^*, v^*)\right)_{\alpha=\alpha^*}=0\quad \text{for a.e. }s\in [t, T]\]
from which we get \eqref{tag:alpha*}.

3. Conditions \eqref{tag:1} -- \eqref{tag:2} follow directly from \eqref{eq:HJA} and \eqref{tag:alpha*}. Conditions \eqref{tag:3} and \eqref{tag:4} coincide with \eqref{tag:adjoint1}, \eqref{tag:adjoint2}.
\end{proof}

\begin{corollary}[Feedback control and regularity]\label{coro:regularity}
Let $(x^*, v^*, \alpha^*)$ be optimal for $u(x, v, t)$ and $p=(p_x,p_v)$ be the  related costate as in Proposition~\ref{prop:pontriagin}. Then:
\begin{enumerate}
\item The costate $p$ is uniquely expressed in terms of $x^*, v^*$ for every $s\in [t, T]$ by
\begin{equation}
\!\begin{cases}\label{tag:p}
p_x(s)\!\!&\!\!\!=-D_xg(x^*(T), v^*(T))-\!\!\displaystyle\int_s^T \!\!D_x\ell(x^*(\tau), v^*(\tau),\tau)\,d\tau,\\
p_v(s)\!\!&\!\!\!=-D_{v}g(x^*(T), v^*(T))-\displaystyle\int_s^T D_{v}\ell(x^*(\tau),v^*(\tau), \tau)+v^*(\tau)-p_x(\tau)\,d\tau.\\
\end{cases}
\end{equation}
\item The optimal control
 $\alpha^*$ is a feedback control {\rm (}i.e., a function of $x^*, v^*${\rm )}, uniquely expressed   in terms of $x^*, v^*$ for a.e. $s\in [t, T]$ by
\begin{equation}\label{tag:alpha}
\alpha^*(s)=p_v(s).
\end{equation}
\item  The optimal trajectory $(x^*,v^*)$ and the optimal control $\alpha^*$ are of class $C^1$. 
In particular the equalities \eqref{tag:alpha*} -- \eqref{tag:alpha} do hold for every $s\in [t, T]$.
Moreover
\begin{equation}\label{stimaL8}
  \begin{array}[c]{rcl}
    \|v^*\|_{C^1}+\|\alpha^*\|_{C^1}&\leq& C(1+|v|),\\
\|x^*\|_{C^1}&\leq& |x|+C(1+|v|).
  \end{array}
\end{equation}
\item Assume that,  for some $k\in\mathbb N$, $D_x\ell(x,v, s)$, $D_v\ell(x,v, s)$ are of class $C^k$.
Then $(x^*, v^*)$, $p$ and $\alpha^*$  are of class   $C^{k+1}$.
\end{enumerate}
\end{corollary}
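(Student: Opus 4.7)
The plan is to read off each of the four conclusions directly from the Pontryagin necessary conditions established in Proposition \ref{prop:pontriagin}, then bootstrap regularity from the resulting system of ODEs.

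For \emph{part 1}, I would integrate the adjoint equations \eqref{tag:adjoint1}--\eqref{tag:adjoint2} backward from $T$, using the transversality condition \eqref{tag:transversality} as the terminal value. Since $p_x'=D_x\ell(x^*,v^*,s)$ and $p_x(T)=-D_xg(x^*(T),v^*(T))$, the first line of \eqref{tag:p} follows immediately; the second line is obtained in the same way from $p_v'=-p_x+v^*+D_v\ell$. This also proves uniqueness of the costate in terms of $(x^*,v^*)$: the formulas are explicit. \emph{Part 2} is simply a restatement of \eqref{tag:alpha*} combined with the formula for $p_v$ obtained in part 1.

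For \emph{part 3}, I would argue by a short bootstrap. The representations \eqref{tag:p} show that $p_x$ and $p_v$ are continuous in $s$ (as integrals of continuous functions of continuous arguments, since $\ell\in C^2$ and $(x^*,v^*)\in AC$). Hence $\alpha^*=p_v$ is continuous, so $(v^*)'=\alpha^*$ makes $v^*\in C^1$, and then $(x^*)'=v^*$ makes $x^*\in C^1$; feeding this back into \eqref{tag:p} gives $p\in C^1$ and therefore $\alpha^*\in C^1$. For the quantitative bound \eqref{stimaL8}, the main ingredient is a comparison of $u(x,v,t)$ with the trajectory corresponding to $\alpha\equiv 0$: together with the boundedness \eqref{HOC} of $\ell,g$, this yields $\int_t^T|\alpha^*(s)|^2\,ds\le C(1+|v|^2)$, and then Cauchy-Schwarz applied to $v^*(s)=v+\int_t^s\alpha^*(\tau)\,d\tau$ gives $\|v^*\|_\infty\le C(1+|v|)$. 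Inserting these bounds and $\|D_x\ell\|_\infty,\|D_vg\|_\infty\le C$ into the integral formulas \eqref{tag:p} yields $\|p_x\|_\infty\le C$ and $\|p_v\|_\infty\le C(1+|v|)$, which translates at once into the desired estimates on $\|v^*\|_{C^1}$ (via $(v^*)'=\alpha^*=p_v$) and on $\|\alpha^*\|_{C^1}$ (via $(\alpha^*)'=p_v'=-p_x+v^*+D_v\ell$); the bound on $\|x^*\|_{C^1}$ follows from $(x^*)'=v^*$ and $x^*(t)=x$.

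For \emph{part 4}, I would read the system \eqref{tag:1}--\eqref{tag:4} as an ODE $y'=F(y,s)$ with $y=(x,v,p_x,p_v)$ and $F$ built from $D_x\ell$, $D_v\ell$ and affine expressions. If $D_x\ell,D_v\ell\in C^k$, then $F\in C^k$, and a standard induction (differentiating the system and invoking the previously established $C^1$ regularity as the base case) shows $y\in C^{k+1}$; since $\alpha^*=p_v$, the same regularity transfers to $\alpha^*$.

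The only non-routine step is the estimate in part 3; the rest is direct integration of the Pontryagin conditions and classical ODE smoothness. I expect the main care to be taken in verifying that the a priori bound on $\|\alpha^*\|_{L^2}$ is indeed of order $1+|v|$, which relies on comparing with the zero-control trajectory and exploiting the boundedness of $\ell,g$ from \eqref{HOC}.
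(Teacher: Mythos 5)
Your proposal is correct, and parts 1, 2 and 4, as well as the qualitative regularity bootstrap in part 3, follow the same lines as the paper: integrate \eqref{tag:adjoint1}--\eqref{tag:adjoint2} backward from the transversality condition, read off $\alpha^*=p_v$ from \eqref{tag:alpha*}, bootstrap continuity through \eqref{tag:p}, and obtain higher regularity by induction (the paper bootstraps through the integral formulas rather than citing $C^k$ ODE theory for \eqref{tag:1}--\eqref{tag:4}, but these are interchangeable). Where you genuinely diverge is the quantitative estimate \eqref{stimaL8}. The paper combines \eqref{tag:2} and \eqref{tag:4} into the second-order equation $(v^*)''-v^*=-p_x+D_v\ell(x^*,v^*,\cdot)$, observes that the right-hand side is uniformly bounded (via \eqref{tag:p} and (H2)--(H3)), notes that $v^*(t)=v$ and that $(v^*)'(T)=p_v(T)=-D_vg(x^*(T),v^*(T))$ is bounded, and solves this two-point boundary value problem by variation of constants. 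You instead derive $\|\alpha^*\|_{L^2}^2\le C(1+|v|^2)$ by comparing the optimal cost with the zero-control trajectory and using the boundedness \eqref{HOC} of $\ell$ and $g$, then get $\|v^*\|_\infty\le C(1+|v|)$ by Cauchy--Schwarz and push these bounds through \eqref{tag:p} to control $p_x$, $p_v=\alpha^*=(v^*)'$ and $(\alpha^*)'=-p_x+v^*+D_v\ell$. Your route is more elementary (no boundary value problem, only the definition of the value function and the explicit costate formulas) and, importantly, it is not circular: the bound $u(x,v,t)\le C(1+|v|^2)$ you use is rederived directly from the zero-control competitor, not taken from Lemma~\ref{L1}, which itself quotes \eqref{stimaL8}. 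The paper's argument, on the other hand, stays entirely inside the Pontryagin system and needs no competitor trajectory; both yield the same linear-in-$|v|$ dependence.
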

\begin{proof}Point~$1$ is obtained integrating \eqref{tag:3}--\eqref{tag:4} and taking into account the final time condition $p(T)=-D g(x^*(T),v^*(T))$.
Point~$2$ follows from \eqref{tag:alpha*}.

Proof of point~$3$. Since $x^*, v^*$ are continuous by the definition of admissible trajectories in \eqref{eq:constraint}, the continuity of  $\alpha^*$ follows from \eqref{tag:p} and~\eqref{tag:alpha}. Then (\ref{eq:HJA}) implies $v^*\in C^1$ and also $x^*\in C^1$.
Relations~\eqref{tag:p},~\eqref{tag:alpha} (and the regularity of $\ell$) imply, respectively,  that  $p$ and $\alpha^*$  are of class $C^1$. By \eqref{tag:2}, we get that $v^*$ is $C^2$. Let us now prove the bounds \eqref{stimaL8}. To this end, we observe that equations \eqref{tag:2} and \eqref{tag:4} entail
\begin{equation*}
(v^*)''(\tau)-v^*(\tau)=-p_x(\tau)+D_v\ell(x^*(\tau),v^*(\tau),\tau)
\end{equation*}
where, by \eqref{tag:p} and ($H2$), the right hand side is bounded uniformly in $x$ and $v$, $\tau\in [t,T]$. Moreover,
\begin{itemize}
\item $(v^*)(t)=v$
\item by \eqref{tag:2}, \eqref{tag:transversality} and the regularity of $g$, $(v^*)'(T)$ is bounded uniformly in $x$ and $v$. 
\end{itemize}
Hence, using the method of variation of constants for the above ordinary differential equation with assigned the values of $v^*(t)$ and of $(v^*)'(T)$, we get the estimate for $v^*$ and for $\alpha^*$. Integrating $v^*$, we get the estimate for $x^*$.

Proof of point~$4$. The relations~\eqref{tag:p} and the $C^1$-regularity of $x^*, v^*$ and $p$  imply that, actually,  $p\in C^2$. Therefore, \eqref{tag:alpha} gives the $C^2$-regularity of $\alpha^*$ and, finally, \eqref{dyn} yields the $C^2$-regularity of $x^*, v^*$.
Further regularity of $x^*, v^*$, $\alpha^*$ and $p$ follows by a standard bootstrap inductive argument.
\end{proof}

\begin{remark}\label{contrcont}
Taking advantage of Corollary~\ref{coro:regularity}-(3), we will always consider the representation of the optimal control~$\alpha^*$ which belongs to~$C^1$.
\end{remark}

 Corollary~\ref{th:nobifurc} that follows implies that the optimal trajectories for $u(x,v, t)$ do not bifurcate at any time $r>t$.
\begin{corollary}\label{th:nobifurc}
Under Hypothesis~\eqref{HOC}, let $(x^*, v^*)$ be an  optimal trajectory for $u(x,v, t)$.
For every $t< r< T$, there are no other optimal trajectories for $u(x^*(r), v^*(r), r)$ other than $(x^*, v^*)$ restricted to $[r,T]$.
\end{corollary}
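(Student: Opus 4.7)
The existence half is immediate from Lemma~\ref{DPP}-(iii), which states that $(x^*, v^*, \alpha^*)|_{[r,T]}$ is optimal for $u(x^*(r), v^*(r), r)$. It remains to prove uniqueness. Suppose $(\tilde x, \tilde v, \tilde \alpha)$ is any optimal triple for $u(x^*(r), v^*(r), r)$, with associated costate $\tilde p$ from Proposition~\ref{prop:pontriagin}. The strategy is to show that $(\tilde x, \tilde v, \tilde p)(r) = (x^*, v^*, p^*)(r)$ and then conclude via Cauchy--Lipschitz applied to the Hamiltonian system~\eqref{tag:1}--\eqref{tag:4}.

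The main trick is to glue the two controls together. By Lemma~\ref{DPP}-(ii), the control $\hat\alpha$ obtained by concatenating $\alpha^*|_{[t,r]}$ and $\tilde\alpha$ at time $r$ is optimal for $u(x,v,t)$, and its trajectory $(\hat x, \hat v)$ coincides with $(x^*, v^*)$ on $[t,r]$ and with $(\tilde x, \tilde v)$ on $[r,T]$. Proposition~\ref{prop:pontriagin} yields a costate $\hat p\in AC([t,T])$ with $\hat\alpha = \hat p_v$ almost everywhere, and Corollary~\ref{coro:regularity}-(3) applied to this new optimal trajectory upgrades $\hat\alpha$ to a function of class $C^1$ on the whole interval $[t,T]$; in particular $\hat\alpha$ is continuous across $s=r$, so $\alpha^*(r) = \tilde\alpha(r)$.

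On $[t,r]$, $\hat v = v^*$ forces $\hat\alpha = (v^*)' = \alpha^* = p_v^*$, whence $\hat p_v \equiv p_v^*$ on that interval. Subtracting the adjoint equation~\eqref{tag:adjoint2} written for $\hat p$ and for $p^*$ (both with state $(x^*, v^*)$) then yields $\hat p_x \equiv p_x^*$ on $[t,r]$, so $\hat p(r) = p^*(r)$. On $[r,T]$, $\hat p$ and $\tilde p$ solve the same linear adjoint system~\eqref{tag:3}--\eqref{tag:4} with state $(\tilde x, \tilde v)$ and identical terminal value $-Dg(\tilde x(T), \tilde v(T))$, hence $\hat p \equiv \tilde p$ on $[r,T]$ (either by~\eqref{tag:p} or by backward uniqueness for the linear ODE). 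Combining, $\tilde p(r) = p^*(r)$, which together with $(\tilde x, \tilde v)(r) = (x^*, v^*)(r)$ supplies the full initial datum required.

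Both triples $(\tilde x, \tilde v, \tilde p)$ and $(x^*, v^*, p^*)|_{[r,T]}$ now solve~\eqref{tag:1}--\eqref{tag:4} on $[r,T]$ with identical values at $s=r$; since $\ell$ and $g$ are $C^2$ by~\eqref{HOC}, the right-hand side is locally Lipschitz in $(x,v,p_x,p_v)$ and the standard uniqueness theorem for ODEs concludes. The one delicate point is the matching $\hat p(r) = p^*(r)$: it relies essentially on the $C^1$-regularity of the optimal control of the glued trajectory (Corollary~\ref{coro:regularity}-(3)), which prevents $\hat p_v$ from jumping at $s=r$ and reduces the bifurcation question to a standard Cauchy problem on $[r,T]$.
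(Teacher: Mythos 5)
Your argument is correct and follows essentially the same route as the paper: concatenate the competitor with the original trajectory at time $r$ (Lemma~\ref{DPP}), bring in the Pontryagin costates and their regularity (Proposition~\ref{prop:pontriagin}, Corollary~\ref{coro:regularity}), recover the costate from the trajectory on the common interval via \eqref{tag:2} and \eqref{tag:4}, and conclude by Cauchy--Lipschitz uniqueness for the system \eqref{tag:1}--\eqref{tag:4}. The only cosmetic difference is that the paper compares the costates of the original and concatenated trajectories at an interior time $\tau\in(t,r)$ and applies forward uniqueness on $[\tau,T]$, whereas you match the data at $r$ after the extra (but harmless) identification of $\hat p$ with $\tilde p$ on $[r,T]$ via the transversality condition.
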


\begin{proof} 1.
Let $r\in (t, T)$ and $(y^*, w^*)$ be an optimal  trajectory for $u(x^*(r), v^*(r), r)$.
Lemma~\ref{DPP} ensures that $(z^*, \nu^*)$, the concatenation  of $(x^*, v^*)$ with $(y^*, w^*)$ at $r$ is an optimal trajectory 
for $u(x,v, t)$. Let $p:=(p_x, p_v), q:=(q_x, q_v)$ be the costates corresponding respectively to  $(x^*, v^*)$ and to $(z^*, \nu^*)$.
Both $(x^*, v^*, p)$ and $(z^*, \nu^*, q)$ satisfy \eqref{tag:1} -- \eqref{tag:4} on $[t, T]$.
Now, Corollary~\ref{coro:regularity} shows that $(x^*, v^*)$ and $(z^*, \nu^*)$ are of class $C^1$.
Since $x^*=z^*$,  $v^*=\nu^*$ on $[t, r]$, we choose $\tau$ such that $t<\tau<r$.
From \eqref{tag:2}, we get  \[p_v(\tau)=q_v(\tau).\]
Moreover, from \eqref{tag:2} and \eqref{tag:4}, we also get that
\[p_x(\tau)=q_x(\tau).\]
Therefore, both $(x^*, v^*, p)$ and $(z^*, \nu^*, q)$ are solutions to the same Cauchy problem on $[t, T]$ 
with the first order differential system \eqref{tag:1}-\eqref{tag:4} and Cauchy data at $\tau$.
 The regularity assumptions on $\ell, g$ and Cauchy-Lipschitz Theorem guarantee the uniqueness of the solution. 
Thus $x^*=z^*$, $v^*=\nu^*$ on $[\tau,T]$, from which we obtain the desired identities
$x^*=y^*$ and  $v^*=w^*$ on $[r, T]$.
\end{proof}

\begin{definition}\label{def:cal_U}
For any~$(x,v,t)\in\R^{2N}\times[0,T]$, let ${\mathcal U}(x,v,t)$ denote the set of optimal controls for the value function~$u(x,v,t)$ defined in~\eqref{repr}.
\end{definition}
\begin{remark}\label{contrcont2}
Lemma~\ref{DPP}-(i) and Remark~\ref{contrcont} ensure that $\emptyset\ne {\mathcal U}(x,v,t)\subset C^1([t,T];\R^N)$.
\end{remark}

\begin{lemma}\label{4.9} The following properties hold:
\begin{enumerate}
\item The function $u(x,\cdot,t)$ is differentiable at $v$
  if and only if the set $\{\alpha(t):\, \alpha\in {\mathcal{U}}(x,v, t)\}$ is a singleton.
Moreover $D_vu(x,v, t)=-\alpha(t)$.
\item In particular, if $\mathcal U(x,v, t)$ is a singleton, then, calling $ (x(s),v(s))$ 
the optimal trajectory associated to the singleton  $\mathcal U(x,v, t)$,
   $D_vu(x(s),v(s), s)$ exists for any $s\in [t,T]$.
\item If $  u(\cdot,\cdot,t) $ is differentiable at $(x,v)$, then $\mathcal U(x,v, t)$ is a singleton.
\end{enumerate}\end{lemma}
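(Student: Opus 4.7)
The strategy is to exploit a duality between first-order variations of the cost along feasible curves and the superdifferential of the semi-concave value function (Lemma~\ref{semi-concav}). As a preliminary key inclusion, fix $\alpha\in\mathcal U(x,v,t)$ with trajectory $(x^*,v^*)$ and costate $p=(p_x,p_v)$ from Proposition~\ref{prop:pontriagin}, and perturb only the initial velocity $v\mapsto v+hw$ while keeping the control $\alpha$: the perturbed trajectory is $(x^*(s)+hw(s-t),v^*(s)+hw)$. Plugging it into $J_t$ and using $u(x,v+hw,t)\le J_t$, a first-order expansion in $h$ together with integration by parts, using the adjoint relations~\eqref{tag:3}--\eqref{tag:4} and the transversality~\eqref{tag:transversality}, produces cancellations leaving exactly $-hp_v(t)\cdot w=-h\alpha(t)\cdot w$ (by~\eqref{tag:alpha}), with an $O(h^2)$ remainder uniform thanks to the $C^1$-bounds of Corollary~\ref{coro:regularity}. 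Hence $-\alpha(t)\in D^+_v u(x,v,t)$ for every $\alpha\in\mathcal U(x,v,t)$; the analogous variation in $x$ alone yields $-p_x(t)\in D^+_x u(x,v,t)$, which will be needed in part~(3).

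\textbf{Part~(1).} If $u(x,\cdot,t)$ is differentiable at $v$, semi-concavity forces $D^+_v u(x,v,t)=\{D_v u(x,v,t)\}$; the above inclusion then imposes $\alpha(t)=-D_v u(x,v,t)$ for all $\alpha\in\mathcal U(x,v,t)$, which proves the forward implication and the ``moreover''. For the converse, assume $\{\alpha(t):\alpha\in\mathcal U(x,v,t)\}=\{\alpha_0\}$. Applied with $\alpha_0$, the preceding bound gives $u(x,v+hw,t)-u(x,v,t)\le -h\alpha_0\cdot w+O(h^2)$. For the matching lower bound I pick $\alpha_h\in\mathcal U(x,v+hw,t)$ and run its trajectory from $(x,v)$; the symmetric calculation produces $u(x,v,t)-u(x,v+hw,t)\le h\alpha_h(t)\cdot w+O(h^2)$. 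The $C^1$-estimates of Corollary~\ref{coro:regularity} are uniform for $h$ small, so by Arzel\`a-Ascoli any uniform cluster point $\bar\alpha$ of $\{\alpha_h\}$ is optimal for $(x,v,t)$ (by continuity of $u$, Lemma~\ref{L1}, and lower semi-continuity of $J_t$); the hypothesis then forces $\bar\alpha(t)=\alpha_0$, and the standard subsequence argument upgrades this to $\alpha_h(t)\to\alpha_0$ along the whole sequence. Combining the two bounds gives linear directional derivatives $w\mapsto -\alpha_0\cdot w$, hence differentiability at $v$ with $D_v u(x,v,t)=-\alpha_0$.

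\textbf{Parts~(2) and~(3).} Part~(2) follows from part~(1) via the no-bifurcation Corollary~\ref{th:nobifurc}: if $\mathcal U(x,v,t)=\{\alpha^*\}$ with trajectory $(x^*,v^*)$, then $\mathcal U(x^*(s),v^*(s),s)=\{\alpha^*_{|[s,T]}\}$ for every $s\in[t,T)$, so the set of time-$s$ values is $\{\alpha^*(s)\}$, a singleton; part~(1) delivers $D_v u(x^*(s),v^*(s),s)=-\alpha^*(s)$, while $s=T$ is trivial since $u(\cdot,\cdot,T)=g\in C^2$. For part~(3), differentiability of $u(\cdot,\cdot,t)$ at $(x,v)$ entails differentiability in $x$ and in $v$ separately; the two preliminary inclusions then force both $p_x(t)$ and $p_v(t)$ to coincide across all optimal triples, so $p(t)=-Du(x,v,t)$ is uniquely determined. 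Since the Hamiltonian system~\eqref{tag:1}--\eqref{tag:4} has a $C^1$ right-hand side, Cauchy-Lipschitz forces the solution $(x^*,v^*,p)$ starting from $(x,v,p(t))$ to be unique, and by~\eqref{tag:alpha} so is $\alpha^*=p_v$, i.e.\ $\mathcal U(x,v,t)$ is a singleton.

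\textbf{Main obstacle.} The subtle step is the lower bound in the converse of part~(1): the weak-$L^2$ compactness of $\{\alpha_h\}$ must be upgraded so that the \emph{pointwise} values $\alpha_h(t)$ converge to the unique $\alpha_0$. This is exactly where the uniform $C^1$-estimate of Corollary~\ref{coro:regularity}-(3), valid on a neighbourhood of $(x,v)$ with a constant independent of $h$, is indispensable; without it, different subsequences could produce different pointwise limits at~$t$ and the lower bound would only yield a convex hull on the right-hand side, not the sharp value $-\alpha_0$.
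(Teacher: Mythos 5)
Your proposal is correct and, apart from packaging, follows the paper's own strategy: the forward implication of (1) and part (3) rest on the same frozen-control perturbation combined with the adjoint equations \eqref{tag:3}--\eqref{tag:4} and the transversality condition from Proposition~\ref{prop:pontriagin} (the paper performs exactly this computation, obtaining $D_vu=-p_v(t)=-\alpha(t)$ and $D_xu=-p_x(t)$ before invoking Cauchy--Lipschitz for the system \eqref{tag:1}--\eqref{tag:4}), and part (2) is the same concatenation/no-bifurcation observation, which the paper phrases via Lemma~\ref{DPP} rather than Corollary~\ref{th:nobifurc}. The only genuine difference is how the converse of (1) is closed: the paper shows that the reachable-gradient set $D^*_vu(x,v,t)$ is a singleton, using the forward implication at nearby differentiability points $v_n$, the $C^1$ bound \eqref{stimaL8}, Ascoli--Arzel\`a and stability of optimal controls, and then concludes by \cite[Proposition 3.3.4]{CS}; you instead sandwich the difference quotients between $-\alpha_0\cdot w$ and $-\alpha_h(t)\cdot w$ via the superdifferential inclusion at the perturbed point, with the same compactness/stability step giving $\alpha_h(t)\to\alpha_0$. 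One step you should make explicit: ``linear directional derivatives imply differentiability'' is a G\^ateaux-to-Fr\'echet upgrade that fails for general Lipschitz functions; here it is legitimate only because $u(x,\cdot,t)$ is semi-concave (Lemma~\ref{semi-concav}), e.g.\ since for a semi-concave function the one-sided directional derivative equals $\min_{p\in D^+_vu(x,v,t)}p\cdot w$, and linearity in $w$ forces $D^+_vu(x,v,t)$ to be a singleton (alternatively, make your two bounds uniform over $|w|=1$ by the same compactness argument). This is precisely what the paper's appeal to \cite{CS} encapsulates; with that sentence added, your argument is complete.
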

\begin{proof}
1. We prove that  if $D_vu(x,v,t)$ exists, then all $\alpha(\cdot)\in {\mathcal{U}}(x,v, t)$ take the same value  $\alpha(t)$ at $t$ 
 and $D_vu(x,v,t)=-\alpha(t)$.
If  $\alpha(\cdot)\in {\mathcal{U}}(x,v, t)$, calling $(x(\cdot), v(\cdot))$ the corresponding optimal trajectory, then
$$u(x,v,t)=
 \int_t^T\frac12 |\alpha(s)|^2+\frac12 |v(s)|^2+
\ell(x(s),v(s), s)\,ds+g(x(T), v(T)),$$
and
$(x(\cdot), v(\cdot))$ and $\alpha(\cdot)$ satisfy the necessary conditions for optimality proved in Proposition
\ref{prop:pontriagin}.
Take $h=(h_1,h_2)\in\R^{2N}$ and consider the solution $(y(\cdot), w(\cdot))$ of (\ref{eq:HJA}) with initial condition $(y(t),w(t))=(x+h_1, v+h_2)$ and control~$\alpha$, namely
\begin{eqnarray*}
y(s)&=&x+h_1+(v+h_2)(s-t)+\int_t^s\,\int_t^{\theta}\alpha(\tau)d\tau d\theta=x(s)+h_1+h_2(s-t),\\
w(s)&=&v+h_2+\int_t^s\,\alpha(\tau)d\tau=v(s)+h_2.
\end{eqnarray*}
Hence,
\begin{equation}
  \label{eq:1}
  \begin{array}[c]{ll}
    & u(x+h_1,v+h_2,t)-u(x,v,t)\\
\le & \ds \int_t^T\frac12 |w(s)|^2-\frac12 |v(s)|^2+
\ell(y(s),w(s), s)-\ell(x(s),v(s), s)\,ds\\ &+g(y(T), w(T))-g(x(T), v(T))\\
=& \ds \int_t^T\frac12 |v(s)+h_2|^2-\frac12 |v(s)|^2+
\ell(x(s)+h_1+h_2(s-t),v(s)+h_2, s)-\ell(x(s),v(s), s)\,ds \\ 
&\ds+
g(x(T)+h_1+h_2(T-t), v(T)+h_2)-g(x(T), v(T))\\
=& \ds \int_t^T\frac12 h_2^2+h_2\cdot v(s)+
\ell(x(s)+h_1+h_2(s-t),v(s)+h_2, s)-\ell(x(s),v(s), s)\,ds\\ 
&\ds+
g(x(T)+h_1+h_2(T-t), v(T)+h_2)-g(x(T), v(T)).
  \end{array}
\end{equation}
The arbitrariness of the sign of the components of $(h_1,h_2)$ and the differentiability of $u$ w.r.t. $v$ yields
\begin{displaymath}
  \begin{array}[c]{ll}
    D_vu(x, v, t)=&\ds \int_t^Tv(s)ds+\int_t^T
    D_x\ell(x(s),v(s), s)(s-t)+D_v\ell(x(s),v(s),s)\,ds\\
    &+D_xg(x(T), v(T))(T-t)+D_vg(x(T), v(T)). 
  \end{array}
\end{displaymath}
By \eqref{tag:3} and~\eqref{tag:transversality}, we obtain
\begin{displaymath}
  \begin{array}[c]{rcl}
\ds \int_t^T     D_x\ell(x(s),v(s),s)(s-t)ds&=& \ds \int_t^Tp_x'(s)(s-t)ds= p_x(T)(T-t)- \int_t^Tp_x(s)ds\\ &= & \ds
 -D_xg(x(T), v(T))(T-t)-\int_t^Tp_x(s)ds.
  \end{array}
\end{displaymath}
Hence
\begin{displaymath}
  \begin{array}[c]{rcl}
    D_vu(x,v,t)&=&\ds \int_t^T(v(s)-p_x(s)+D_v\ell(x(s),v(s),s))ds+ D_vg(x(T), v(T))\\ &=& \ds
\int_t^Tp_v'(s)ds+D_vg(x(T), v(T))\\ &=&-p_v(t) =-\alpha(t),
  \end{array}
\end{displaymath}
where the last two equalities are due to~\eqref{tag:4},\eqref{tag:alpha*} and the terminal condition for $p$.
This uniquely determines the value of $\alpha(\cdot)$ at time~$t$.\\

Conversely we prove that, if all $\alpha(\cdot)\in {\mathcal{U}}(x,v,t)$ take the same value
$\alpha(t)$ at $t$, then  $D_vu(x,v, t)$ exists. Fix $x$ and $t$.
 From the semi-concavity of $u(x,\cdot,t)$, the differentiability of   $u(x, \cdot, t)$ at $v$ will follow from the fact that $D_v^*u(x,v,t)$ is a singleton (see \cite[Proposition 3.3.4]{CS}).
Recall that  the set of reachable gradients of $u(x,\cdot,t)$ is defined by
\begin{displaymath}
D_v^*u(x,v,t)=\left\{
\chi\in \R^{N}:  \exists (v_n)_{n\in \N}  \hbox{ with} \left| \begin{array}[c]{ll} & \ds \lim_{n\to\infty} v_n= v, \\
 &u(x,\cdot, t) \hbox{ is differentiable at } v_n ,    
\\ & \ds \lim_{n\to \infty} D_vu(x,v_n,t)  =\chi.
  \end{array}\right.
   \right \}  .
\end{displaymath}
Take $\chi\in D_v^*u(x,v,t)$. By definition of $D_v^*u(x,v,t)$ there exist  sequences 
$\{v_n\}$, $\{\chi_n=D_vu(x, v_n, t)\}$ such that
\begin{equation}\label{1DGv}
v_n\to v\quad\hbox{and}\quad \chi_n\to \chi.
\end{equation}
Consider $\alpha_n\in\mathcal U(x, v_n, t)$;  by the other part of the statement (already proven), we know that
\begin{equation}\label{2DG}
-\alpha_n(t)=D_vu(x,v_n, t)=\chi_n.
\end{equation}
From estimate \eqref{stimaL8} in Corollary \ref{coro:regularity}, we see that
\begin{equation}\label{7DG}
\|\alpha_{n}^{\prime}\|_{\infty} \leq C(1+|v_n|)\leq C,\  \text {for any } n.
\end{equation}
Hence from
Ascoli-Arzel{\`a} Theorem, we deduce that, after extracting a  subsequence, $\alpha_n$ uniformly converge to some $\alpha\in C([t,T];\re^{N})$.
In particular, 
calling $(x_n(\cdot), v_n(\cdot))$ the trajectory associated to $\alpha_n$ starting from $(x, v_n)$:
\begin{equation*}
x_{n}(s)=x+v_{n}(s-t)+\int_t^s \int_t^{\theta}\,\alpha_{n}(\tau)d\tau d\theta,\quad\hbox{and}\quad
v_{n}(s)=v_{n}+\int_t^s\,\alpha_{n}(\tau)d\tau.
\end{equation*}
we get:
\begin{eqnarray*}
&&x_{n}(s)\to x(s)=x+v(s-t)+ \int_t^s\int_t^{\theta}\,\alpha(\tau)d\tau d\theta,\ \text{uniformly in } [t, T],\\
&&v_{n}(s)\to v(s)= v+\int_t^s\,\alpha(\tau)d\tau\ \text{uniformly in }[t, T].
\end{eqnarray*}
Moreover, by classical arguments of stability, $\alpha$ is optimal, i.e. $\alpha\in\mathcal U(x,v, t)$.
The uniform convergence of the $\alpha_n$ yields  in particular that
$\alpha_n(t)\to \alpha(t)$ where $\alpha(t)$ is uniquely determined by assumption. By~(\ref{1DGv}) and \eqref{2DG}, we get that 
$\chi_n\to \chi=\alpha(t)$. This implies that $D_v^*u(x, v, t)$ is a singleton, then $D_vu(x,v, t)$ exists. Going back to the first part of the proof, we see that $D_vu(x,v, t)=-\alpha(t)$.

\medskip

2. If $\mathcal U(x,v, t)=\{\alpha(\cdot)\}$, then for any $s\in[t,T]$, $\alpha(s)$ is uniquely determined.
Indeed, if there exists $\beta\in \mathcal U(x(s),v(s), s)$, then the concatenation $\gamma$ of $\alpha$ and $\beta$ (see Lemma \ref{DPP}) is also optimal, i.e. $\gamma\in\mathcal U(x,v, t)=\{\alpha(\cdot)\}$.\\
Then from point 1 with $t=s$ at $(x(s),v(s))$, we deduce that $D_vu(x(s),v(s), s)$ exists.

\medskip

3. From point 1, we know that for any $\alpha(\cdot)\in {\mathcal{U}}(x,v, t)$, $\alpha(t)$ is unique and  coincides with~$-D_vu(x,v,t)$. Hence, relation~\eqref{tag:alpha*} ensures $p_v(t)=-D_vu(x,v,t)$.
On the other hand, note that, since $D_xu(x, v, t)$ exists, we get from (\ref{eq:1}) that
\begin{eqnarray*}
D_xu(x, v, t)&=&\int_t^T
D_x\ell(x(s),v(s),s)ds + D_xg(x(T), v(T))\\
&=&\int_t^Tp_x'(s)ds+ D_xg(x(T), v(T))=-p_x(t);
\end{eqnarray*}
thus, $p_x(t)$ and $p_v(t)$ are both uniquely determined.
Hence \eqref{tag:1}-\eqref{tag:4} is a system of differential equations with initial conditions $x(t), v(t)$, $p_x(t)$ and $p_v(t)$ 
which admits a unique solution $(x(\cdot), v(\cdot), p_x(\cdot),p_v(\cdot))$ by Cauchy-Lipschitz theorem, 
 and $(x(\cdot), v(\cdot))$ is the unique optimal  trajectory starting from $(x,v)$, associated to the unique optimal control law $\alpha(\cdot)=p_v(\cdot)$.
\end{proof}
\begin{lemma}[optimal synthesis]\label{B}
Consider $\xi\in \R^N$ and $\eta\in \R^N$.
  \begin{enumerate}
  \item Let $x\in C^1([t,T]; \R^N)$, $v\in {\rm{AC}}([t,T];\R^N)$ be such that
\begin{equation}
  \label{eq:2}
x(t)=\xi, \quad \hbox{and}\quad  v(t)=\eta,
\end{equation}
and for almost every $s\in (t,T)$,
\begin{equation}
  \label{eq:3}
u(x(s),\cdot,s) \hbox{ is differentiable at } v(s),
\end{equation}
and
\begin{equation}\label{OS}
  \begin{array}[c]{rcl}
x'(s)&=&v(s),\\
v'(s)&=& -D_vu(x(s),v(s), s),
  \end{array}
\end{equation}
where $u$ is the solution of \eqref{HJ}. Under these assumptions, the control law  $\alpha(s)=v'(s)=-D_vu(x(s),v(s), s)$ is optimal for $u(\xi,\eta, t)$.
\item If $u(\cdot, \cdot, t)$ is differentiable at $(\xi,\eta)$, 
 then problem (\ref{eq:2}), (\ref{OS}) has a unique solution corresponding to the optimal trajectory.
  \end{enumerate}
\end{lemma}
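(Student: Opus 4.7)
I split the argument into the two parts of the lemma.

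\textbf{Part 1 (Verification argument).} The plan is to show that the auxiliary function
\[
\phi(s):=u(x(s),v(s),s)+\int_t^s\Big[\tfrac12|\alpha(\tau)|^2+\tfrac12|v(\tau)|^2+\ell(x(\tau),v(\tau),\tau)\Big]d\tau,
\]
with $\alpha(s):=v'(s)=-D_vu(x(s),v(s),s)$, is constant on $[t,T]$. Since $\phi(t)=u(\xi,\eta,t)$ and $\phi(T)=J_t(x,v,\alpha)$, this yields $J_t(x,v,\alpha)=u(\xi,\eta,t)$, i.e. the optimality of $\alpha$.

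First I would check admissibility: the bound $|D_vu(y,w,s)|\le C(1+|w|)$ from Lemma~\ref{L1}-(1), combined with a Gronwall estimate applied to the ODE $v'=-D_vu(x,v,\cdot)$, gives $v\in W^{1,\infty}(t,T)$ and $\alpha\in L^\infty(t,T;\R^N)$, so $(x,v,\alpha)\in\mathcal{A}(\xi,\eta,t)$. The Dynamic Programming Principle (Lemma~\ref{DPP}-(iv)), applied along this trajectory between any pair $s_1<s_2$, immediately yields $\phi(s_1)\le\phi(s_2)$, so $\phi$ is non-decreasing. Since $u$ is locally Lipschitz on $\R^{2N}\times[0,T]$ by Lemmas~\ref{L1} and~\ref{semi-concav}, and since $s\mapsto(x(s),v(s),s)$ is Lipschitz, $\phi$ is absolutely continuous on $[t,T]$.

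The reverse inequality $\phi'(s)\le 0$ at a.e.\ $s$ is the heart of the argument. At a.e.\ $s$ where the hypothesis \eqref{eq:3} applies, I would combine the semi-concavity of $u$ in $(x,v)$ (Lemma~\ref{semi-concav}) with its Lipschitz continuity in $t$ (Lemma~\ref{L1}-(2)) and with the fact that for a semi-concave viscosity solution the Hamilton-Jacobi equation \eqref{HJ} holds for every element of the superdifferential (see \cite{CS}). This allows one to identify
\[
\frac{d}{ds}u(x(s),v(s),s)=-\tfrac12|D_vu|^2-\tfrac12|v(s)|^2-\ell(x(s),v(s),s).
\]
Adding the integrand of $\phi$ and using $\alpha=-D_vu$ gives $\phi'(s)=0$ a.e., so $\phi$ is constant, concluding Part 1.

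\textbf{Part 2.} Differentiability of $u(\cdot,\cdot,t)$ at $(\xi,\eta)$ implies, by Lemma~\ref{4.9}-(3), that $\mathcal U(\xi,\eta,t)=\{\alpha^*\}$ is a singleton. Letting $(x^*,v^*)$ be the associated optimal trajectory, Corollary~\ref{th:nobifurc} yields $\mathcal U(x^*(s),v^*(s),s)=\{\alpha^*|_{[s,T]}\}$ for every $s\in[t,T]$; Lemma~\ref{4.9}-(2) then guarantees that $D_vu(x^*(s),v^*(s),s)$ exists for every $s$ and equals $-\alpha^*(s)=-(v^*)'(s)$, so $(x^*,v^*)$ is a solution of \eqref{eq:2}-\eqref{OS}. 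Conversely, any solution $(x,v)$ of \eqref{eq:2}-\eqref{OS} produces, by Part 1, an optimal control $\alpha=v'\in\mathcal U(\xi,\eta,t)=\{\alpha^*\}$; integrating the ODE $x'=v$, $v'=\alpha^*$ from $(\xi,\eta)$ then forces $(x,v)=(x^*,v^*)$.

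\textbf{Main obstacle.} The delicate point is the chain-rule identity for $\tfrac{d}{ds}u(x(s),v(s),s)$ in Part 1: the hypothesis provides only differentiability of $u$ in the variable $v$ along the trajectory, not joint differentiability in $(x,v,t)$. Bridging this gap rigorously requires combining the $v$-differentiability with the semi-concavity of $u$ in $(x,v)$, the Lipschitz continuity of $u$ in $t$, and the superdifferential formulation of the Hamilton-Jacobi equation available for semi-concave viscosity solutions.
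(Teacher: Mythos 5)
Your overall architecture is sound: the admissibility check via the linear growth of $D_vu$ and Gronwall, the absolute continuity of $s\mapsto u(x(s),v(s),s)$, and your Part 2 (Lemma~\ref{4.9}-(3), Corollary~\ref{th:nobifurc}, Lemma~\ref{4.9}-(2), then uniqueness by feeding any solution of \eqref{eq:2}--\eqref{OS} back through Part 1) match the paper, which disposes of Part 2 in one line via Lemma~\ref{4.9}. But the step you yourself flag as the main obstacle is genuinely missing, and the tool you name does not close it. What you need at a.e.\ $s$ is the one-sided bound $\frac{d}{ds}u(x(s),v(s),s)\le -\frac12|D_vu(x(s),v(s),s)|^2-\frac12|v(s)|^2-\ell(x(s),v(s),s)$; the opposite bound is the one the DPP already gives you. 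If $p=(p_x,p_v,p_t)$ is an element of the superdifferential of $u$ at the trajectory point and the composite derivative exists, the chain rule gives $\frac{d}{ds}u=p_t+p_x\cdot v(s)+p_v\cdot v'(s)$, and hypothesis \eqref{eq:3} forces $p_v=D_vu$, so the desired bound is equivalent to the \emph{supersolution}-type inequality $-p_t-v\cdot p_x+\frac12|p_v|^2-\frac12|v|^2-\ell\ge 0$ at a \emph{super}differential element. The fact you invoke from \cite{CS} delivers the opposite: the equation holds with equality only at reachable gradients $D^*u$, and on $D^+u=\mathrm{co}(D^*u)$ the convexity of the Hamiltonian yields only the subsolution inequality, which, inserted in the chain rule, reproduces $\phi'\ge 0$ --- exactly what the DPP already told you. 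Note also that the available semiconcavity (Lemma~\ref{semi-concav}) is in $(x,v)$ for fixed $t$, not jointly in $(x,v,t)$, so it does not by itself put the relevant joint superdifferential or reachable gradients at your disposal.

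The paper's proof supplies precisely the missing mechanism. It writes the increment $u(x(s+h),v(s+h),s+h)-u(x(s),v(s),s)$ by Lebourg's nonsmooth mean value theorem (\cite{Cla90}) as $\chi^h\cdot(\Delta x,\Delta v,h)$ with $\chi^h\in\mathrm{conv}\,D^*_{x,v,t}u(y_h,w_h,s_h)$ at an intermediate point, decomposes $\chi^h$ by Carath\'eodory into finitely many reachable gradients $\chi^{h,i}$, proves, using the stability result \cite[Lemma 4.6]{C} for the partial maps $v\mapsto u(x,\cdot,t)$ together with hypothesis \eqref{eq:3}, that $\chi^{h,i}_v\to D_vu(x(s),v(s),s)$ as $h\to 0$, and then --- this is the decisive trick --- uses the PDE \eqref{HJ}, which holds with \emph{equality} at each reachable gradient, to express the combination $\chi^{h,i}_t+w_h\cdot\chi^{h,i}_x$ through $|\chi^{h,i}_v|^2$, so that this combination converges even though the individual $t$- and $x$-components need not. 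Dividing by $h$ and passing to the limit then yields the chain-rule identity and $\phi'=0$. Without this (or an equivalent device) your Part 1 is not proved; the remainder of your argument is fine.
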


\begin{proof}
We adapt the arguments of \cite[Lemma 4.11]{C}. 
Fix $(t,\xi,\eta)\in(0,T)\times \re^{2N}$. Let $x\in C^1([t,T]; \R^N)$, $v\in {\rm{AC}}([0,T];\R^N)$ be as in the statement.
Note that, from \eqref{OS}, since $|D_vu|$ grows at most linearly in $v$ (see Lemma~\ref{L1}-(point $1$)), Gronwall's Lemma ensures that $v(\cdot)$ is bounded in $(t,T)$; consequently, again by \eqref{OS} and Lemma~\ref{L1}-(point $1$), $v(\cdot)$ is Lipschitz continuous.
Therefore, from Lemma \ref{L1}, the function $s\mapsto u(x(s),v(s),s)$ is Lipschitz continuous as well. Hence, for almost every $s\in [t,T]$, 
\begin{itemize}
\item (\ref{eq:3}) and (\ref{OS}) hold,
\item  the function $u(x(\cdot), v(\cdot),\cdot)$ admits a derivative at $s$.
\end{itemize}
Fix such an $s$. Lebourg's Theorem for Lipschitz functions (see \cite[Thm 2.3.7]{Cla90} and \cite[Thm 2.5.1]{Cla90}) ensures that, for any sufficiently small number $h$, there exists $(y_h, w_h, s_h)$ in the open line segment $( (x(s), v(s),s), (x(s+h), v(s+h), s+h))$ and  $(\chi^h_x, \chi^h_v,\chi^h_t) \in {\rm{conv}} \left(D_{x,v,t}^*u(y_h, w_h, s_h)\right)$ such that
\begin{equation}\label{31}
u(x(s+h),v(s+h), s+h)-u(x(s),x(s), s)= \chi^h_x\cdot  (x(s+h)-x(s))+ \chi^h_v\cdot (v(s+h)-v(s)) +\chi^h_t h.
\end{equation}
Here, $\rm{conv}(A)$ stands for the convex hull of a set $A$ while $D_{x,v,t}^*u(y_h, w_h, s_h)$ stands for the reachable gradient at $(y_h, w_h, s_h)$ with respect to the variables $x$, $v$ and $t$ (see~\cite[eq.~ (4.4)]{BCD}. \\
By  Carath{\'e}odory's theorem (see \cite[Thm A.1.6]{CS}), there exist $(\lambda^{h,i}, \chi^{h,i}_x, \chi^{h,i}_v, \chi^{h,i}_t)_{i=1,\dots,2N+2}$ such that   $\lambda^{h,i}\geq0$, $\sum_{i=1}^{2N+2}\lambda^{h,i}=1$, 
 $(\chi^{h,i}_x, \chi^{h,i}_v, \chi^{h,i}_t)\in D_{x,v, t}^*u(y_h,w_h, s_h)$ and $(\chi^h_x, \chi^h_v, \chi^h_t) = \sum_{i=1}^{2N+2}\lambda^{h,i}(\chi^{h,i}_x, \chi^{h,i}_v,\chi^{h,i}_t)$.
We claim that, for any $i=1,\dots, 2N+2$, there holds
\[\lim_{h\to 0}\chi^{h,i}_v=D_vu(x(s),v(s),s).
\]
Indeed, let $\chi^{i}_v$ be any cluster point of $\{\chi^{h,i}_v\}_h$. After a diagonal extraction, there exist $(x_n,v_n,t_n)$ such that $u$ is differentiable at $(x_n,v_n,t_n)$, $(x_n,v_n,t_n)\to (x(s),v(s),s)$ and $D_vu(x_n,v_n,t_n) \to \chi^{i}_v$ as $n\to\infty$.
By \cite[Lemma 4.6]{C} (applied to $z_n(\cdot):=u(x_n,\cdot,t_n)$), we have
\begin{equation*}
\chi^{i}_v=\lim_{n}D_vu(x_n,v_n,t_n)\in D^+ z(v(s))
\end{equation*}
where $z(\cdot):=u(x(s),\cdot,s)$. On the other hand, assumption~\eqref{eq:3} ensures that~$z$ is differentiable at $v(s)$; hence, by \cite[Proposition 3.1.5-(c)]{CS}, we get $\chi^{i}_v=D_v u(x(s),v(s),s)$ and our claim is proved. In particular, we deduce that $\chi^{h}_v$ converge to $D_vu (x(s),v(s), s)$ as $h\to 0$.\\
On the other hand, since $u$ is a viscosity solution to equation~\eqref{HJ} and $(\chi^{h,i}_x, \chi^{h,i}_v, \chi^{h,i}_t)\in D_{x,v, t}^*u(y_h,w_h, s_h)$, we obtain that for all $i\in 1,\dots, 2N+2$, 
\[
- \chi^{h,i}_t +\frac12\left|\chi^{h,i}_{v}\right|^2-\frac12\left|w_h\right|^2-w_h\cdot\chi^{h,i}_{x}=\ell(y_h, w_h, s_h).
\]
Therefore, 
$\chi^{h}_t  +   w_h\cdot \chi^{h}_x =  \frac12 \sum_{i=1}^{2N+2}\lambda^{h,i}\left|\chi^{h,i}_{v}\right|^2
-\frac12\left|w_h\right|^2- \ell(y_h,w_h,s_h)$ converges to\\
$\frac12 |D_v u(x(s),v(s), s)|^2  
- \frac12|v(s)|^2 -\ell(x(s),v(s), s)$ as $h\to 0$.
\\
Then dividing (\ref{31}) by $h$ and letting $h$ tend to $0$, we get that 
\begin{displaymath}
  \begin{split}
 & \frac {d}{ds} \left(u(x(s),v(s),s)\right)\\ =  
   & D_vu (x(s),v(s), s)\cdot v'(s)
 + \frac 1 2 \left|D_vu (x(s),v(s), s)\right| ^2 - \frac 1 2 |v(s)|^2 -\ell(x(s),v(s),s).    
  \end{split}
\end{displaymath}
Recalling (\ref{OS}), we get 
\begin{displaymath}
  \frac {d}{ds} \left(u(x(s),v(s),s)\right)= - \frac 1 2 \left|D_vu (x(s),v(s), s)\right| ^2 - \frac 1 2 |v(s)|^2 -\ell(x(s),v(s),s).
\end{displaymath}
or in equivalent manner,
\begin{displaymath}
  \frac {d}{ds} \left(u(x(s),v(s),s)\right)=  - \frac 1 2 \left|v'(s)\right| ^2 - \frac 1 2 |v(s)|^2 -\ell(x(s),v(s),s),
\end{displaymath}
which holds for almost every $s$.
Integrating this equality on $[t,T]$ and taking into account the terminal condition in~\eqref{HJ}, we obtain
\begin{displaymath}
u(x,v, t)=\int_t^T\frac12|v'(s)|^2+ \frac12|v(s)|^2+ \ell(x(s),v(s), s) ds +g(x(T),v(T)).  
\end{displaymath}
Therefore, the control law $\alpha(s)= v'(s)= -D_vu (x(s),v(s),s)$ is optimal. This achieves the proof of the first statement.

\medskip

The second statement is a direct consequence of  Lemma \ref{4.9}.
\end{proof}

\section{The continuity equation}\label{sect:c_eq}
In this section, our aim is to study  equation \eqref{eq:MFGAs}-(ii), and more precisely the well-posedness of 
\begin{equation}\label{continuity}
\left\{
\begin{array}{ll} \partial_t m+v\cdot D_xm-
\diver_v (m\, D_v u)= 0,&\qquad \textrm{in }\re^{2N}\times (0,T),\\
m(x,v, 0)=m_0(x,v), &\qquad \textrm{on }\re^{2N},
\end{array}\right.
\end{equation}
where $u$ is the value function associated to the cost $J_t$ in~\eqref{def:OC}; for the sake of clarity, let us recall from Proposition~\ref{exuniq} that $u$ is the unique viscosity solution fulfilling~\eqref{eq:stimau} to the problem
\begin{equation*}
\left\{\begin{array}{ll}
-\partial_t u-v\cdot D_xu+\frac1 2 \vert D_vu\vert^2-\frac1 2 \vert v \vert^2-l(x,v)  =F[\overline m(t)](x,v),&\quad \textrm{in }\R^{2N}\times (0,T),\\
u(x,v, T)=G[\overline m(T)](x, v),&\quad \textrm{on }\re^{2N},
\end{array}\right.
\end{equation*}
and~$\overline m$ is fixed and belongs to   $ C([0,T];\mathcal P_1(\re^{2N}))$.
It is worth to observe that the differential equation in~\eqref{continuity} can also be written
\[
\partial_t m- \diver_{x,v} (m\,  b)=0,
\]
with $ b:=(-v, D_v u)$.
In the present framework, the properties of $u$ (semi-concavity and local Lipschitz continuity) are not enough to ensure that the flow $\Phi(x,t,s)$  given by Lemma~\ref{B} has a Lipschitz continuous inverse,  by contrast with  \cite[Lemma 4.13]{C}.
Moreover, the drift $ b$ is only locally bounded; this lack of regularity makes it impossible to apply the standard results  for drifts which are Lipschitz continuous (uniqueness, existence and representation formula of $m$ as the push-forward of $m_0$ through the characteristic flow; e.g., see \cite[Proposition 8.1.8]{AGS}). We shall overcome this difficulty by applying the superposition principle \cite[Theorem 8.2.1]{AGS}. The latter yields a representation formula of $m$ as the push-forward of some measure on~$C([0,T];\re^{2N})$ through the evaluation map~$e_t$ defined by  $e_t(\gamma)=\gamma(t)$ for all continuous function $\gamma$ with value in $\R^{2N}$.
 In the following theorem, we  state existence, uniqueness, and some regularity results for~\eqref{continuity}:

\begin{theorem}\label{prp:m}
Under assumptions {\rm (H)}, for any $\overline m\in  C([0,T];  \mathcal P_1(\re^{2N}))$,
 there is a unique $m\in C^{\frac 1 2} ([0,T];  \mathcal P_1(\re^{2N})) \cap L^\infty((0,T);\mathcal P_2(\re^{2N}))$
which solves problem \eqref{continuity}   in the sense of Definition~\ref{defsolmfg}.

Moreover $m(t,\cdot)$ satisfies: for any for $\phi\in C^0_b(\R^{2N})$, for any $t\in [0,T]$,
\begin{equation}
\label{ambrosio}
\int_{\re^{2N}} \phi(x,v)\, m(x,v,t)dxdv=\int_{\re^{2N}}\phi\left(\overline {\gamma}_{x,v}(t)\right)\,m_0(x,v)\, dx dv, 
\end{equation}
where, for a.e. $(x,v)\in\re^{2N}$,  $\overline{\gamma}_{x,v}$ is the solution to \eqref{dyn}.
\end{theorem}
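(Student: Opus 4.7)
The plan is to combine a regularization/vanishing viscosity argument to produce $m$ with the superposition principle \cite[Theorem 8.2.1]{AGS} and the optimal synthesis of Lemma~\ref{B} to identify it.

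\textbf{Existence by regularization.} Since $D_v u$ is merely defined almost everywhere, I would first mollify: set $u^\e := u * \rho_\e$, so that $b^\e := (-v, D_v u^\e)$ is smooth. The semi-concavity of $u$ in $(x,v)$ (Lemma~\ref{semi-concav}) together with the estimates of Lemma~\ref{L1} provide a uniform-in-$\e$ local bound on $D_v u^\e$ (linear in $|v|$) and, crucially, a uniform upper bound on $\diver_v D_v u^\e$. Define $m^\e(t) := \Phi^\e(t)_\sharp m_0$, where $\Phi^\e$ is the classical characteristic flow of $b^\e$ on $[0,T]$. By Liouville's formula, $\|m^\e(t)\|_\infty \le \|m_0\|_\infty \exp(CT)$ uniformly in $\e$; standard arguments then yield uniform bounds on the second moments of $m^\e(t)$ and a $\tfrac12$-Hölder estimate in $\mathbf d_1$. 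Up to a subsequence, $m^\e \to m$ in $C([0,T];\mathcal P_1(\R^{2N}))$ and weakly-$\star$ in $L^\infty$. The delicate point is to pass to the limit in the nonlinear term $m^\e D_v u^\e$: by Alexandrov's theorem for semi-concave functions, $D_v u^\e \to D_v u$ almost everywhere on the set of differentiability, with a uniform local bound; combined with the $L^\infty$ estimate on $m^\e$ this yields the required distributional equation, and the $L^\infty$ bound survives in the limit.

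\textbf{Representation via superposition.} To derive \eqref{ambrosio}, I would apply the superposition principle to the distributional solution $m$: the drift $b := (-v, D_v u)$ satisfies $\int_0^T\!\!\int |b|\, dm(t)\, dt < \infty$ (by the second-moment bound and the linear growth of $|D_v u|$ in $|v|$ from Lemma~\ref{L1}), hence there exists $\eta \in \mathcal P(C([0,T];\R^{2N}))$ with $m(t) = (e_t)_\sharp \eta$ and such that $\eta$-a.e.\ curve $\gamma = (\gamma_1,\gamma_2)$ is absolutely continuous, obeys $\gamma_1' = \gamma_2$ and $\gamma_2'(s) = -D_v u(\gamma_1(s),\gamma_2(s),s)$ for a.e.\ $s$, with $\gamma(0) \in \mathrm{supp}\, m_0$. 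I would then invoke Lemma~\ref{B}: provided $u(\gamma_1(s),\cdot,s)$ is differentiable at $\gamma_2(s)$ for a.e.\ $s$, such $\gamma$ is the unique optimal trajectory starting from $\gamma(0)$ at time $0$. This hypothesis holds $\eta$-almost surely because $(e_0)_\sharp \eta = m_0$ is absolutely continuous and $u(\cdot,\cdot,0)$ is semi-concave, so $\gamma(0)$ is a point of differentiability of $u(\cdot,\cdot,0)$ for $\eta$-a.e.\ $\gamma$; Lemma~\ref{4.9}-(3) then says $\mathcal U(\gamma(0),0)$ is a singleton, and Lemma~\ref{4.9}-(2) extends differentiability of $D_v u$ along the whole optimal trajectory $\overline\gamma_{\gamma(0)}$. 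Corollary~\ref{th:nobifurc} finally forces $\gamma \equiv \overline\gamma_{\gamma(0)}$ on $[0,T]$, so $\eta = (\overline\gamma_{\cdot})_\sharp m_0$, which is precisely \eqref{ambrosio}.

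\textbf{Uniqueness.} Any other solution $\tilde m$ in the stated regularity class admits its own superposition representation $\tilde \eta$, and the argument above shows $\tilde \eta$ must be concentrated on the unique optimal curves starting from $m_0$-a.e.\ initial point. Hence $\tilde \eta = \eta$ and $\tilde m = m$.

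\textbf{Main obstacle.} The hard step is the identification: $u$ is semi-concave in $(x,v)$ and hence differentiable almost everywhere in phase space, but this says nothing a priori about the behaviour of $D_v u$ \emph{along} an individual curve drawn by the superposition measure $\eta$. The resolution is to pass through time $0$: absolute continuity of $m_0$ transfers the a.e.\ differentiability of $u(\cdot,\cdot,0)$ to an $\eta$-almost sure property of $\gamma(0)$, and then Lemma~\ref{4.9}-(2) propagates differentiability along the whole optimal trajectory. This bootstrap via the uniqueness of optimal trajectories (Corollary~\ref{th:nobifurc}) replaces the missing Lipschitz inverse of the flow and ties existence, representation and uniqueness into a single package.
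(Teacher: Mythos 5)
Your overall architecture (regularize to build $m$, then superposition principle plus the optimal synthesis of Lemma~\ref{B} to identify and uniquely determine it) is the same as the paper's; the existence step differs in the regularization — you mollify $u$ and push $m_0$ forward along the smooth flow, using the semi-concavity bound $\Delta_v u \le C$ to control the Jacobian, whereas the paper adds a viscosity $\sigma\Delta_{x,v}$ to the whole system and gets the $L^\infty$, moment and H\"older bounds from the parabolic/stochastic problem — and your variant is workable and arguably more elementary, since the uniform bounds you need (linear growth of $D_vu$ in $v$, one-sided bound on $D^2u$) are preserved by convolution.

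The genuine gap is in the identification step. To apply Lemma~\ref{B}-(1) to an $\eta$-a.e.\ curve $\gamma$ you must verify hypothesis \eqref{eq:3} \emph{along $\gamma$ itself}, i.e.\ that $u(\gamma_1(s),\cdot,s)$ is differentiable at $\gamma_2(s)$ for a.e.\ $s$. Your argument verifies something else: differentiability of $u(\cdot,\cdot,0)$ at $\gamma(0)$ (fine, since $m_0\ll$ Lebesgue), then Lemma~\ref{4.9}-(3) and Lemma~\ref{4.9}-(2), which give differentiability of $D_vu$ along the \emph{optimal} trajectory $\overline\gamma_{\gamma(0)}$ — not along $\gamma$, which at this stage is only known to solve the ODE produced by the superposition principle. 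Invoking Corollary~\ref{th:nobifurc} to ``force $\gamma\equiv\overline\gamma_{\gamma(0)}$'' does not close the loop, because that corollary concerns only optimal trajectories and $\gamma$ has not yet been shown optimal; as written the argument is circular. The correct closure (this is what the paper does, somewhat tersely) is that no propagation from $t=0$ is needed: the superposition principle already concentrates $\eta_{x,v}$ on curves satisfying $\gamma_2'(s)=-D_vu(\gamma_1(s),\gamma_2(s),s)$ for a.e.\ $s$ (the set \eqref{eq:8}); equivalently, since $m(t)\ll$ Lebesgue with bounded density for every $t$ and $D_vu(\cdot,\cdot,t)$ exists Lebesgue-a.e., Fubini and $m(t)=(e_t)_\#\eta$ give that for $\eta$-a.e.\ $\gamma$ and a.e.\ $s$ the gradient $D_vu$ exists at $(\gamma(s),s)$, and semi-concavity of $u(x,\cdot,s)$ upgrades this to \eqref{eq:3}. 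Then Lemma~\ref{B}-(1) shows each such $\gamma$ is an optimal trajectory from $\gamma(0)$ at time $0$, and the a.e.\ differentiability of $u(\cdot,\cdot,0)$ together with Lemma~\ref{4.9}-(3) (i.e.\ Lemma~\ref{B}-(2)) gives uniqueness of that optimal trajectory for $m_0$-a.e.\ starting point, so $\eta_{x,v}=\delta_{\overline\gamma_{x,v}}$ and \eqref{ambrosio} and uniqueness of $m$ follow. In short: the differentiability at time $0$ serves to identify the limit curve uniquely, not to verify the hypothesis of Lemma~\ref{B}; that hypothesis must be, and can be, checked directly along the superposition curves.
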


The proof of  Theorem~\ref{prp:m} is given in the next two subsections which are devoted respectively to  existence (see Proposition~\ref{VV}) and to uniqueness  and the representation formula (see Proposition~\ref{!FP}).
\subsection{Existence of the solution}\label{subsect:ex}

We wish to  establish the existence of a solution to the continuity equation
via a vanishing viscosity method applied to the {\it whole} MFG system in which the viscous terms involve
Laplace operators with respect to {\it both} $x$ and $v$. This is reminiscent of   \cite[Appendix]{C13} (see also \cite[Section 4.4]{C}).
In this way, $D_vu$ is replaced by $D_vu^\sigma$, which is regular by standard regularity theory for parabolic equations; 
this implies the regularity of the solution of the Fokker-Planck equation (see \cite{CH}).
 Note also that $D_vu$ may be unbounded; we shall overcome this issue by taking advantage of estimates similar to  those in Lemma \ref{L1}. 
Indeed, these estimates will allow us to apply classical results for the existence and uniqueness of the solution.

\begin{proposition}\label{VV}
Under assumptions $\rm{(H)}$, for any~$\overline m\in C([0,T];\mathcal P_1(\re^{2N}))$, problem \eqref{continuity} has a solution $m$ in the sense of Definition \ref{defsolmfg}. Moreover $m\in C^{\frac 1 2} ([0,T];  \mathcal P_1(\re^{2N})) \cap L^\infty(0,T;\mathcal P_2(\re^{2N}))$.
\end{proposition}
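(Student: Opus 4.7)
The plan is to apply a vanishing viscosity procedure to both equations of the MFG system with $\overline m$ fixed, i.e.\ to add $\sigma(\Delta_x+\Delta_v)$ on the right-hand side of each equation and study the limit $\sigma\to 0^+$. Since $\overline m$ is external, the viscous HJ equation is decoupled from $m^\sigma$, so standard quasilinear parabolic theory (together with the fact that the Hamiltonian is quadratic in $D_vu$, which permits a Cole--Hopf type transformation) yields a classical solution $u^\sigma\in C^{2,1}(\R^{2N}\times[0,T])$. The Fokker--Planck equation for $m^\sigma$ then has smooth coefficients and a unique nonnegative smooth classical solution (see~\cite{CH}).

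The heart of the argument is to obtain estimates on $u^\sigma$ that are \emph{uniform in $\sigma$}. The function $u^\sigma$ can be represented as the value function of a stochastic optimal control problem with the same running cost, Brownian perturbations in both $x$ and $v$, and the analogue of~\eqref{eq:HJA}. Inspecting the proofs of Proposition~\ref{exuniq}, Lemma~\ref{L1}, and Lemma~\ref{semi-concav}, one checks that they carry over essentially verbatim, yielding the $\sigma$-independent bounds $-C\le u^\sigma\le C(1+|v|^2)$, Lipschitz continuity in $x$, the linear growth estimate $|D_vu^\sigma(x,v,t)|\le C(1+|v|)$, and semiconcavity in $(x,v)$ with a linear modulus independent of $\sigma$. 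From these one derives three uniform estimates on $m^\sigma$: a uniform $L^\infty$ bound, coming from the uniform upper bound on $\Delta_vu^\sigma$ (a byproduct of semiconcavity in $v$) via the maximum principle applied to the non-conservative form of the Fokker--Planck equation; a uniform second-moment bound, obtained by testing the equation against $|x|^2+|v|^2$ and using the linear growth of $D_vu^\sigma$ and Gronwall's inequality; and uniform $1/2$-Hölder continuity in time in $(\mathcal P_1,{\bf d}_1)$, obtained by testing against mollifications of $1$-Lipschitz functions at scale $\varepsilon$ and optimizing $\varepsilon=\sqrt{\sigma(t-s)}$.

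With these uniform estimates in hand, the Ascoli--Arzelà theorem (using tightness from the second-moment bound) and the stability of viscosity solutions furnish a subsequence along which $u^{\sigma_n}\to u$ locally uniformly and $m^{\sigma_n}\to m$ in $C([0,T];\mathcal P_1(\R^{2N}))$, with $m$ inheriting the Hölder regularity, the $L^\infty$ bound, and the bounded second moments. The main obstacle is then passing to the limit in the nonlinear drift term $\iint D_v\varphi\cdot D_vu^{\sigma_n}\,m^{\sigma_n}$ for $\varphi\in C^\infty_c$, because $D_vu^{\sigma_n}$ converges only a.e., not uniformly. The key tool is the uniform semiconcavity of $u^{\sigma_n}$: by standard results on semiconcave functions (see \cite[Ch.~3]{CS}), local uniform convergence $u^{\sigma_n}\to u$ forces $D_vu^{\sigma_n}\to D_vu$ a.e.\ on the set of differentiability of $u$ in $v$, which has full Lebesgue measure. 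Combined with the uniform linear growth $|D_vu^{\sigma_n}|\le C(1+|v|)$, the uniform $L^\infty$ and second-moment bounds on $m^{\sigma_n}$, Vitali's theorem on the support of $\varphi$ yields the required convergence. The remaining linear terms pass to the limit by weak convergence in $\mathcal P_1$, and the viscous terms $\sigma_n\iint\Delta\varphi\,m^{\sigma_n}$ vanish, so $m$ solves~\eqref{continuity} in the distributional sense with the regularity claimed.
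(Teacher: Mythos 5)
Your proposal is correct and follows the same overall strategy as the paper: regularize both equations by $\sigma\Delta_{x,v}$, prove $\sigma$-independent bounds on $u^\sigma$ (quadratic growth, Lipschitz in $x$, $|D_vu^\sigma|\le C(1+|v|)$, semiconcavity) by rerunning the arguments of Lemmas~\ref{L1} and~\ref{semi-concav} on the stochastic representation, deduce the uniform $L^\infty$, second-moment and $\tfrac12$-H\"older (in ${\bf d}_1$) estimates on $m^\sigma$, and pass to the limit using the a.e.\ convergence $D_vu^\sigma\to D_vu$ coming from uniform semiconcavity together with the uniform bounds on $m^\sigma$. Where you genuinely deviate is in how the intermediate estimates are produced: you obtain the moment bound and the time regularity of $m^\sigma$ analytically (testing with $|x|^2+|v|^2$ plus Gronwall, and with mollified $1$-Lipschitz functions optimized at scale $\varepsilon=\sqrt{\sigma(t_2-t_1)}$), whereas the paper derives them probabilistically from the SDE representation \eqref{11C} and the estimates in \cite{KS}; both routes work, and yours is somewhat more self-contained. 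Two cautions. First, your justification of classical solvability of the viscous HJ equation via ``standard quasilinear parabolic theory plus a Cole--Hopf type transformation'' does not stand as written: the Cole--Hopf change of unknown does not linearize \eqref{eq:MFGv}-(i), because the Laplacian acts in $(x,v)$ while the quadratic nonlinearity involves only $D_vu$, and moreover the terms $v\cdot D_xu$ and $\tfrac12|v|^2$ are unbounded. The paper instead first builds $u^\sigma$ as the unique viscosity solution with quadratic growth via the comparison principle of Da Lio--Ley \cite{DLL}, proves the gradient and semiconcavity bounds of Lemma~\ref{visco:lemma5.2}, and only then upgrades to a classical solution (Ishii's equivalence of viscosity and distributional sub/supersolutions, Alexandrov's theorem, and a regularity bootstrap); the conclusion you need is true, but it requires an argument of this type rather than the one you sketch. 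Second, in the limit of the drift term $\iint D_v\varphi\cdot D_vu^{\sigma_n}\,m^{\sigma_n}$, convergence of $m^{\sigma_n}$ in $C([0,T];\mathcal P_1)$ alone is not enough for the piece $\iint D_v\varphi\cdot D_vu\,(m^{\sigma_n}-m)$, since $D_vu$ is only bounded measurable on the support of $\varphi$; you should state that you use the $L^\infty_{\rm loc}$-weak-$*$ convergence of $m^{\sigma_n}$ (available from your uniform $L^\infty$ bound via Banach--Alaoglu), while the other piece $\iint D_v\varphi\cdot(D_vu^{\sigma_n}-D_vu)\,m^{\sigma_n}$ is handled, as you indicate, by dominated (or Vitali) convergence on the compact support of $\varphi$. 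With these two repairs your argument coincides with the paper's proof.
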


We consider the solution $(u^\sigma, m^\sigma)$ to the following problem
\begin{equation}
\label{eq:MFGv}
\left\{
\begin{array}{lll}
(i)\ -\partial_t u-\sigma \Delta_{x,v} u-v\cdot D_xu+\frac1 2 \vert D_vu\vert^2-\frac1 2 \vert v \vert^2-l(x,v)  =F[\overline m](x, v),\ & \textrm{in }\re^{2N}\times (0,T),\\
(ii)\ \partial_t m-\sigma \Delta_{x,v} m-\diver _v (m D_v u)-v\cdot D_xm=0, & \textrm{in }\re^{2N}\times (0,T),\\
(iii)\ m(x,v, 0)=m_0(x,v),\quad  u(x,v,T)=G[\overline m(T)](x, v),& \textrm{on }\re^{2N}.
\end{array}\right.
\end{equation}

Recall that equation \eqref{eq:MFGv}-(ii) has a standard probabilistic interpretation (see relation~\eqref{mstoch} below).
Our aim is to find a solution to problem~\eqref{continuity} by letting $\sigma$ tend to  $0^+$. To this end, some estimates are needed.
\\
Note that equation~\eqref{eq:MFGv}-(ii) can be written in the compact form
\begin{equation}\label{eq:4}
\partial_t m-\sigma \Delta_{x,v} m-\diver _{x,v} (m { b}^\sigma)=0,\qquad \textrm{with }{ b}^\sigma:=(-v, D_v u).
\end{equation}
 
We start by establishing the well-posedness of  system~\eqref{eq:MFGv} and that the functions $u^\sigma$ are Lipschitz continuous and semi-concave uniformly in~$\sigma$.

\begin{lemma}\label{visco:lemma5.2}
Under the same assumptions as in Proposition~\ref{VV}, there exists a unique classical solution~$u^\sigma$ to problem~\eqref{eq:MFGv}-(i), -(iii) with quadratic growth in $(x,v)$. Moreover, there exists a constant $C>0$
which depends only on the constants in assumptions $\rm{(H)}$, in particular  it is independent of $\sigma\le 1$,
 such that
\begin{eqnarray*}
&(a)&|u^\sigma(x,v,t)|\leq C(1+|v|^2),\\
&(b)&\|D_x u^\sigma\|_\infty\leq C, \quad |D_v u^\sigma(x,v,t)|\leq C(1+|v|), \quad |\partial_t u^\sigma(x,v,t)|\leq C(1+|v|^2),\\
&(c)& D^2_{x,v} u^\sigma\leq C,
\end{eqnarray*}
where $D_{x,v}^2 u$ is the Hessian of~$u$ with respect to both~$x$ and~$v$.
\end{lemma}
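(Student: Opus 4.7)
The plan is to realize $u^\sigma$ as the value function of a stochastic optimal control problem and then to adapt, almost verbatim, the deterministic arguments of Lemma~\ref{L1}, Lemma~\ref{semi-concav} and Proposition~\ref{exuniq} to obtain estimates that do not depend on $\sigma$. Explicitly, $u^\sigma(x,v,t)$ should coincide with the infimum, over progressively measurable controls $\alpha$, of the expected cost obtained by replacing the trajectory $(\xi,\eta)$ in $J_t$ with the diffusion
\begin{equation*}
d\xi_s=\eta_s\,ds+\sqrt{2\sigma}\,dB^1_s,\qquad d\eta_s=\alpha_s\,ds+\sqrt{2\sigma}\,dB^2_s,\qquad \xi_t=x,\;\eta_t=v.
\end{equation*}
Existence of a classical solution then follows from standard parabolic theory: using assumption~(H) one first shows by Bellman arguments that this value function is a continuous viscosity solution of~\eqref{eq:MFGv}-(i),(iii) with at most quadratic growth; local $C^{2+\delta,1+\delta/2}$ smoothness follows from Schauder and Ladyzhenskaya--Solonnikov--Ural'tseva theory applied on bounded subregions, once an a~priori local bound on $D_{x,v}u^\sigma$ is known. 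Uniqueness in the class of at-most-quadratic-growth solutions is a consequence of the comparison principle already cited in the proof of Proposition~\ref{exuniq}.

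The uniform estimates in (a),(b) are then obtained by coupling arguments in the spirit of Lemma~\ref{L1}. For (a), the upper bound follows by testing $\alpha\equiv 0$: then $\mathbb{E}|\eta_s|^2=|v|^2+2N\sigma(s-t)$ and the cost is majorized by $C(1+|v|^2)$ thanks to the $C^2$ bounds on $\ell$ and $g$ in~(H3); the lower bound $u^\sigma\ge -C$ comes from the non-negativity of the quadratic cost terms together with these same bounds. For the $x$-Lipschitz estimate, we use the same control $\alpha$ and same Brownian paths starting from $(x,v)$ and $(x',v)$, so that $\eta^{x'}\equiv\eta^x$ and $\xi^{x'}-\xi^x\equiv x'-x$ pathwise, and the difference of costs is bounded by $C|x-x'|$. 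The same coupling with initial velocities $v,v'$ yields $\eta^{v'}-\eta^v\equiv v'-v$ and $\xi^{v'}-\xi^v\equiv(v'-v)(s-t)$; the kinetic term then contributes $(v'-v)\cdot \mathbb{E}\int_t^T\tfrac12(\eta^{v'}+\eta^v)\,ds$, which is controlled by $C(1+|v|+|v'|)|v-v'|$ using an a~priori bound on $\mathbb{E}\int_t^T|\alpha|^2ds$ deduced from the upper bound in (a) (this is the stochastic counterpart of the estimate~\eqref{stimaL8}). The time-Lipschitz bound in (b) follows as in Lemma~\ref{L1}-(2) from the stochastic dynamic programming principle combined with the $(x,v)$-Lipschitz bounds just proved.

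For the semi-concavity (c), we replicate the proof of Lemma~\ref{semi-concav}: pick a nearly optimal control $\alpha$ for $u^\sigma(x_\lambda,v_\lambda,t)$ and propagate it, with the same Brownian paths, from $(x,v)$ and $(y,w)$. Linearity of the SDE in $(\alpha,x,v)$ gives pathwise $\lambda\xi^{x,v}+(1-\lambda)\xi^{y,w}=\xi^{x_\lambda,v_\lambda}$ and analogously for $\eta$; identity~\eqref{eqsemi} on the kinetic term then holds sample-pathwise, and Taylor expanding $\ell$, $F[\overline m]$ and $g$ to second order around the midpoint trajectory yields the same semi-concavity constant as in the deterministic case, thanks to the $\sigma$-independent $C^2$ bounds of~(H3). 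The main obstacle is precisely the $\sigma$-uniformity of these bounds: a direct PDE approach via Bernstein's method or by differentiating~\eqref{eq:MFGv}-(i) would produce constants that degenerate as $\sigma\to 0^+$, because the second-order term is not coercive in the $(x,v)$ direction with respect to the unbounded transport $v\cdot D_x$. The probabilistic coupling circumvents this entirely, since the $\sqrt{2\sigma}\,dB$ increments cancel identically between the two compared trajectories; what makes the whole scheme consistent is the a~priori control of $\mathbb{E}\int|\alpha|^2$ and $\mathbb{E}\sup_s|\eta_s|^2$, which ultimately all reduce to the upper bound in~(a).
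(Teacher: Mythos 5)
Your proposal follows essentially the same route as the paper: $u^\sigma$ is represented as the value function of the stochastic control problem \eqref{1-stoc}, uniqueness among quadratically growing solutions comes from the Da Lio--Ley comparison principle, and the $\sigma$-independent estimates (a)--(c) are obtained by the same Brownian-coupling transposition of the deterministic arguments of Lemmas \ref{L1} and \ref{semi-concav}, with $\mathbb{E}\int|\alpha|^2\le C(1+|v|^2)$ for $\varepsilon$-optimal controls playing the role of \eqref{stimaL8}. The only inessential difference is the classical-regularity step, where you invoke interior Schauder/LSU estimates once a local gradient bound is available, while the paper passes through Ishii's viscosity--distributional equivalence, Alexandrov's theorem for the Lipschitz semi-concave $u^\sigma$, and then regularity and bootstrap for weak solutions.
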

\begin{proof}
Following the same arguments of Proposition~\ref{exuniq} (based on the comparison principle by Da Lio and Ley~\cite[Theorem 2.1]{DLL}),
 one can easily prove the existence  of a viscosity solution to equation ~\eqref{eq:MFGv}-(i) with terminal condition 
as in ~\eqref{eq:MFGv}-(iii) and   satisfying inequality~$(a)$. Furthermore, still by the results in~\cite{DLL}, this solution is unique among the functions with this growth at infinity. Hence, estimate~$(a)$ is proved.

Let us now prove that this viscosity solution~$u^\sigma$ is a classical solution.   To this end, let us  assume for a moment that $u^\sigma$ satisfies estimates~$(b)$ and $(c)$.   We see that $u$ is a viscosity subsolution of 
\begin{displaymath}
- \partial_t u-\sigma \Delta_{x,v} u-v\cdot D_xu\le C(1+|v|^2).
\end{displaymath} 
Moreover, from estimate~$(c)$, we see that at any point $(x,v,t)$, either $u^\sigma$ is twice differentiable with respect to $x$ and $v$, or 
there does exist a smooth function that touches   $u^\sigma$  from below.  This and  estimate~$(b)$ imply  that $u^\sigma$  is a viscosity supersolution of 
\begin{displaymath}
- \partial_t u-\sigma \Delta_{x,v} u-v\cdot D_xu\ge -C(1+|v|^2).
\end{displaymath}
for some positive constant $C$. From \cite{MR1341739}, $u$ is also a distributional subsolution (respectively supersolution) of the same linear inequalities.
Therefore,  both $-\partial_t u^\sigma-\sigma \Delta_{x,v} u^\sigma-v\cdot D_xu^\sigma$ and 
$-\frac1 2 \vert D_vu^\sigma\vert^2+\frac1 2 \vert v \vert^2 +l(x,v)+F[\overline m](x, v)$ are in $L_{\rm loc}^\infty$.
On the other hand, from~$(b)$ and $(c)$,  Alexandrov's theorem implies that $u^\sigma$ is twice differentiable with respect to $x$ and $v$ almost everywhere, so the equation 
$$-\partial_t u^\sigma-\sigma \Delta_{x,v} u^\sigma-v\cdot D_xu^\sigma
=-\frac1 2 \vert D_vu^\sigma\vert^2+\frac1 2 \vert v \vert^2 +\ell(x,v,t),$$
(where $\ell$ and $g$ are defined in (\ref{ell})),
holds almost everywhere, and in the sense of distributions since both the left and right hand sides are in $L^\infty_{\rm loc}$.
\\
Hence classical results on the regularity of weak solutions (including bootstrap) can be applied and yield that $u$ is a classical solution. 

Let us now prove the estimates $(b)$ and $(c)$, by using similar arguments  to those contained in the proofs 
of Lemma  \ref{L1}. They  use a representation formula of $u$ arising from  a stochastic optimal control problem (see, for example, \cite{DLL,BCQ,C}).
\\
Let $(\Omega, \mathcal F, (\mathcal F_t), \mathbb{P})$ be a  complete filtered probability space, the filtration $(\mathcal F_t)$ supporting a standard $2N$-dimensional Brownian motion $B_s=(B_{x,s}, B_{v,s})$.
Let ${\mathcal A}_t$ be the set of $\R^{N}$-valued $(\mathcal F_t)$-progressively measurable processes and  let  $\mathbb{E}$ be the expectation with respect to the probability measure $\mathbb{P}$. The unique solution of \eqref{eq:MFGv}-(i) which satisfies point (a) can be written as:
\begin{displaymath}
u^\sigma(x,v, t) =\inf_{\alpha\in \mathcal A_t} \mathbb{E}\left(
\begin{array}[c]{l}
\ds \int_t^T\left[\frac12 |\alpha(s)|^2+\frac12 |V(s)|^2+\ell(X(s), V(s), s) \right] \,ds +g(X(T), V(T))
  \end{array}
     \right)
\end{displaymath}
where the controlled process $(X(\cdot), V(\cdot))$  satisfies 
\begin{displaymath}
  X(t)=x, \quad V(t)=v,
\end{displaymath}
almost surely and 
is governed by the stochastic differential equations
\begin{equation}
\label{1-stoc}
\left\{
\begin{array}{l}
dX=V(s) ds +\sqrt{2\sigma} dB_{x,s}, \\
dV= \alpha(s) ds +\sqrt{2\sigma} dB_{v,s}.
\end{array}
\right.
\end{equation}
Thus, almost surely,
\begin{equation}
\label{pro}
\left\{
\begin{array}{rcl}
\ds X(s)&=& \ds x+v(s-t)+\int_t^s \int_t^{\theta}\alpha(\tau) \,d\tau d\theta+\sqrt{2\sigma} \int_t^s\left( \int_t^{\theta}
dB_{v,\tau}\right)\, d\theta+\sqrt{2\sigma} \int_t^s dB_{x,\tau},\\
V(s)&=& \ds v+\int_t^s \alpha(\tau)\,d\tau+\sqrt{2\sigma} \int_t^s dB_{v,\tau}.
\end{array}
\right.
\end{equation}
To prove $(b)$, we can exactly use the same arguments as for Lemma \ref{L1}, replacing the paths 
$(x(s), v(s))$ and $(y(s), w(s))$ by the processes $(X(s), V(s))$ and $(Y(s), W(s))$,
 and noting that, from \eqref{pro}, we get similar equalities as in \eqref{diff}.\\
Note that, for any $\sigma$,  we get from $(a)$ that any $\epsilon$-optimal control $\alpha^{\sigma}$ for $u^\sigma(x,v, t)$ satisfies
\begin{displaymath}
\mathbb{E}\left(\int_t^T|\alpha^{\sigma}(s)|^2ds\right)\leq C(1+|v|^2),  
\end{displaymath}
hence, we get the same estimates as  (\ref{eq:9})
and \eqref{L2}, namely   estimate $(b)$. An analytic proof of $(b)$ is also possible,  see \cite[Chapter XI]{Lie}. 
\\

In order to prove $(c)$, we can follow the same procedure as in the proof of  Lemma \ref{semi-concav},
noting that:\\
i) equalities \eqref{eqsemi} and \eqref{reltutte} are still true for the stochastic processes,\\
ii) if we fix $s\in [t,T]$, using a  Taylor expansion of $g$ as in \eqref{taypro}, we get 
\begin{multline*}
g(X(s),V(s))=
g(X_{\lambda}(s), V_{\lambda}(s))+ Dg(X_{\lambda}(s), V_{\lambda}(s))(X(s)-X_{\lambda}(s), V(s)-V_{\lambda}(s))\\
+\frac{1}{2}(X(s)-X_{\lambda}(s), V(s)-V_{\lambda}(s))D^2g(\xi,\eta)(X(s)-X_{\lambda}(s), V(s)-V_{\lambda}(s))^T,
\end{multline*}
where
$\xi=X(s)+\theta_1 (X(s)-X_{\lambda}(s))$, $\eta=V(s)+\theta_2 (V(s)-V_{\lambda}(s))$
for suitable $\theta_1$ and $\theta_2$ in $[0,1]$. For a similar proof, see  \cite{BCQ}.
\end{proof}

\begin{lemma}\label{visco:buonapos}
Under the same assumptions as in Proposition~\ref{VV}, there exists a unique classical solution $m^\sigma$ 
to problem~\eqref{eq:MFGv}-(ii), -(iii) with a sub-exponential growth in $(x,v)$. Moreover, $m^\sigma>0$.
\end{lemma}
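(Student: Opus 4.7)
The approach is to leverage the smoothness of $u^\sigma$ established in Lemma~\ref{visco:lemma5.2} in order to reduce the problem to the classical theory of linear parabolic equations with smooth (albeit unbounded) coefficients. By a parabolic bootstrap applied to equation~\eqref{eq:MFGv}-(i), whose right-hand side $\tfrac12|D_vu^\sigma|^2 - \tfrac12|v|^2 - \ell$ becomes locally smooth once the $C^{1,1}$ bounds $(b)$ and $(c)$ of Lemma~\ref{visco:lemma5.2} are known, the classical solution $u^\sigma$ is in fact smooth on $\R^{2N}\times[0,T]$, with $D_v u^\sigma$ locally Lipschitz in $(x,v)$ and of at most linear growth in $v$.

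For existence, I would use the stochastic representation: on a filtered probability space supporting a standard $2N$-dimensional Brownian motion $(B_{x,s},B_{v,s})$, consider the SDE $dX_s = V_s\, ds + \sqrt{2\sigma}\, dB_{x,s}$, $dV_s = -D_v u^\sigma(X_s,V_s,s)\, ds + \sqrt{2\sigma}\, dB_{v,s}$, with initial condition $(X_0,V_0)$ distributed according to $m_0$. Since the drift $(v,-D_vu^\sigma)$ is locally Lipschitz in $(x,v)$ with at most linear growth, this SDE admits a unique strong solution, and the time-marginal law $m^\sigma(t) := \mathrm{Law}(X_t,V_t)$ provides a weak solution of the continuity equation~\eqref{eq:MFGv}-(ii)-(iii). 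Ellipticity of the $2N$-dimensional noise together with the smoothness of the drift, combined with standard parametrix or Aronson-type constructions for the heat kernel associated with the linear operator, imply that $m^\sigma$ has a smooth density and is strictly positive on $\R^{2N}\times(0,T]$; the Gaussian-type upper bound on the transition kernel yields at most sub-exponential growth in $(x,v)$. Alternatively, one can directly invoke Friedman-type existence theory for linear parabolic equations in unbounded domains with coefficients of linear growth, along the lines of~\cite{CH} and \cite[Appendix]{C13}.

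For uniqueness within the class of classical solutions with sub-exponential growth, the natural route is duality: given two such solutions $m_1, m_2$ and an arbitrary $\varphi_0 \in C^\infty_c(\R^{2N})$, let $\varphi$ solve the terminal-value problem for the backward adjoint equation with $\varphi(\cdot,T) = \varphi_0$, and integrate $\mu := m_1 - m_2$ against $\varphi$ over $\R^{2N}\times[0,T]$. The adjoint is itself a non-degenerate linear parabolic equation with smooth coefficients, for which standard maximum principles and Gaussian estimates on the fundamental solution produce a test function with sufficient decay to absorb the sub-exponential growth of $\mu$ at infinity; this forces $\int_{\R^{2N}}\varphi_0\, d\mu(T) = 0$ for every $\varphi_0$, hence $\mu \equiv 0$. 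The positivity $m^\sigma > 0$ then follows directly from the stochastic representation combined with the strict positivity of the Gaussian transition density, or equivalently from the strong minimum principle for uniformly parabolic equations.

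The main delicate point is the linear growth of the drift in $v$, which forbids off-the-shelf bounded-coefficient parabolic theory and necessitates working in the sub-exponential growth class throughout. This is precisely what makes the Aronson-type estimates and the weighted duality arguments slightly more technical than in the bounded-coefficient case, but the linear-growth regime is classical and is the setting already exploited in~\cite{CH} and \cite[Appendix]{C13}, on which we can rely.
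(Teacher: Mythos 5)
Your argument is essentially correct, but it takes a genuinely different route from the paper. The paper's proof is a short, purely analytic reduction: using Lemma~\ref{visco:lemma5.2} it rewrites \eqref{eq:MFGv}-(ii) in non-divergence form, $\partial_t m-\sigma \Delta_{x,v} m - b^{\sigma}\cdot D_{x,v} m-(\Delta_{v}u^{\sigma})\,m=0$ with $m(0)=m_0$, observes that $|b^{\sigma}|\leq C(1+|v|)$ and $\Delta_{v}u^{\sigma}\leq C$, and then simply invokes Ikeda's theorem \cite{IK} on the Cauchy problem for linear parabolic equations with unbounded coefficients to obtain existence and uniqueness of the classical solution in the sub-exponential class, and the Harnack inequality of \cite{LSU} for strict positivity. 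You instead re-derive these facts by hand: existence from the time-marginals of the strong solution of the associated SDE (consistent, incidentally, with the representation \eqref{mstoch} that the paper itself exploits in Lemma~\ref{visco:lemma4}), regularity and positivity from parametrix/Aronson-type constructions and the strong maximum principle, and uniqueness from duality with the backward Kolmogorov equation together with Gaussian decay of its solutions. Your route is more self-contained and probabilistically transparent, but it is exactly where the technical burden lies: Aronson-type bounds and the weighted duality must be justified for a drift of linear growth (they are not off-the-shelf, as you acknowledge), whereas the citation of \cite{IK} and \cite{LSU} disposes of these points at once. Two small caveats: the claim that a bootstrap makes $u^\sigma$ smooth on $\R^{2N}\times[0,T]$ overshoots, since $\ell(x,v,t)=l(x,v)+F[\overline m(t)](x,v)$ is in general only continuous in $t$; what Lemma~\ref{visco:lemma5.2} provides, and all your SDE construction needs, is that $u^\sigma$ is a classical solution with $D_vu^\sigma$ locally Lipschitz in $(x,v)$ and of linear growth in $v$. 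Likewise, the Gaussian estimates and the integration by parts at infinity in your duality step should either be carried out explicitly (e.g.\ via Gaussian barriers adapted to the linear-growth drift) or replaced by a citation covering this setting, as the paper does.
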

\begin{proof}
By Lemma~\ref{visco:lemma5.2}, the problem for $m^\sigma$ can be written 
\[
\quad \partial_t m-\sigma \Delta_{x,v} m -b^{\sigma}\cdot D_{xv} m-\left(\Delta_{v}u^{\sigma}\right)m=0,
\quad m(0)=m_0,
\]
where $b^\sigma$ has been introduced in (\ref{eq:4}) and from the estimates contained in  Lemma \ref{visco:lemma5.2},
 $|b^{\sigma}|\leq C(1+|v|)$ and $\Delta_{v}u^{\sigma}\leq C$.
Using this  and the results contained in \cite{IK}, we get the existence and uniqueness of a classical solution $m^\sigma$ of \eqref{eq:MFGv}-(ii) with initial condition  as in~\eqref{eq:MFGv}-(iii).
From the assumptions on $m_0$ and Harnack inequality  (see for example \cite[Theorem 2.1, p.13]{LSU}) we get that $m^\sigma(\cdot,t)>0$ for $t>0$.
\end{proof}

Let us now prove some properties of the functions~$m^\sigma$ which will play a crucial role in the proofs of Proposition~\ref{VV} and  of Theorem~\ref{thm:main}.
\begin{lemma}\label{visco:lemma4}
Under the same assumptions of Proposition~\ref{VV}, there exists a constant $K>0$
which depends only on the constants in assumptions $\rm{(H)}$ and on $m_0$, in particular  it is independent of $\sigma\le 1$,  such that:
\begin{equation*}
\begin{array}{ll}
1.\quad &\|m^\sigma\|_\infty\leq K, \\
2.\quad &{\bf d}_1(m^\sigma(t_1),m^\sigma(t_2))\leq K(t_2-t_1)^{1/2}, \qquad \forall t_1\le t_2\in[0,T],\\
3.\quad & \displaystyle\int_{\re^{2N}}(|x|^2+|v|^2)\,dm^\sigma(t)(x,v)\leq K \left(\displaystyle\int_{\re^{2N}}(|x|^2+|v|^2)\,dm_0(x,v)+1 \right),\qquad \forall t\in[0,T].
\end{array}
\end{equation*}
\end{lemma}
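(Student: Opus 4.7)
The three estimates are essentially independent once Lemma~\ref{visco:lemma5.2} is in hand, and they all rest on its $\sigma$-uniform bounds $|D_vu^\sigma|\le C(1+|v|)$ and $\Delta_vu^\sigma\le C$, together with the form of the Fokker-Planck equation used in the proof of Lemma~\ref{visco:buonapos},
\[
\partial_tm^\sigma-\sigma\Delta_{x,v}m^\sigma-b^\sigma\cdot D_{x,v}m^\sigma-(\Delta_v u^\sigma)\,m^\sigma=0,\qquad b^\sigma=(-v,D_vu^\sigma),
\]
for which $\diver_{x,v}b^\sigma=\Delta_v u^\sigma\le C$.

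To prove the first estimate, my plan is to set $\tilde m:=e^{-Ct}m^\sigma$, so that $\tilde m\ge 0$ satisfies the same linear parabolic equation with a nonpositive zero-order coefficient $\Delta_vu^\sigma-C$. A parabolic maximum principle on the unbounded domain $\R^{2N}$, justified by the sub-exponential growth of $m^\sigma$ given in Lemma~\ref{visco:buonapos}, then yields $\|\tilde m(t)\|_\infty\le\|m_0\|_\infty$, whence the desired $L^\infty$ bound (observing that $m_0\in L^\infty$ by (H4)). An equivalent and perhaps cleaner route is to multiply the equation by $p(m^\sigma)^{p-1}$, integrate by parts after a smooth cut-off, exploit $\diver b^\sigma\le C$, and pass first to the limit in the cut-off and then $p\to\infty$.

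For the second estimate, I would use the probabilistic interpretation of $m^\sigma$ as the law at time $s$ of $Y_s=(X_s,V_s)$ solving
\[
dX_s=V_s\,ds+\sqrt{2\sigma}\,dB^x_s,\quad dV_s=-D_vu^\sigma(Y_s,s)\,ds+\sqrt{2\sigma}\,dB^v_s,\quad Y_0\sim m_0.
\]
Then ${\bf d}_1(m^\sigma(t_1),m^\sigma(t_2))\le\mathbb{E}|Y_{t_2}-Y_{t_1}|$. The bound $|D_vu^\sigma|\le C(1+|v|)$ together with a preliminary Gronwall argument yields $\sup_{s\in[0,T]}\mathbb{E}|V_s|<\infty$ (the initial moment being finite because $m_0$ is compactly supported), and the It\^o isometry applied to the martingale parts then delivers $\mathbb{E}|Y_{t_2}-Y_{t_1}|\le C(t_2-t_1)+C\sqrt{\sigma(t_2-t_1)}$, which for $\sigma\le 1$ implies the claimed $(t_2-t_1)^{1/2}$ H\"older bound.

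For the third estimate, I would test the equation against the weight $\varphi(x,v)=|x|^2+|v|^2$ (rigorously, against a smooth cut-off of $\varphi$, passing to the limit via item~$1$ and the sub-exponential bound of Lemma~\ref{visco:buonapos}); integration by parts produces
\[
\frac{d}{dt}\int(|x|^2+|v|^2)\,dm^\sigma=4N\sigma+2\int x\cdot v\,dm^\sigma-2\int v\cdot D_vu^\sigma\,dm^\sigma,
\]
and using $2|x\cdot v|\le|x|^2+|v|^2$ together with $|v\cdot D_vu^\sigma|\le C(1+|v|^2)$ yields a linear Gronwall inequality $M'(t)\le C_1+C_2 M(t)$ for $M(t):=\int(|x|^2+|v|^2)dm^\sigma$, from which the bound follows. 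The recurring technical obstacle throughout is the unboundedness of the drift $b^\sigma$: each of the three integration-by-parts arguments needs a truncation, and what makes them work uniformly in $\sigma\le 1$ is precisely the matching between the linear growth of $D_vu^\sigma$ and the sub-exponential growth of $m^\sigma$ furnished by Lemmata~\ref{visco:lemma5.2} and~\ref{visco:buonapos}.
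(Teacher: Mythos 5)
Your proposal is correct and, for points 1 and 2, follows essentially the paper's route: point 1 is exactly the paper's argument (expand $\diver_v(m^\sigma D_vu^\sigma)=D_vm^\sigma\cdot D_vu^\sigma+m^\sigma\Delta_vu^\sigma$, use the uniform semi-concavity bound $\Delta_vu^\sigma\le C$, and compare with an exponential supersolution --- your rescaling $\tilde m=e^{-Ct}m^\sigma$ is the same device as the paper's supersolution $Ce^{Ct}$), and point 2 uses the same stochastic representation $m^\sigma(t)=\mathcal{L}(Y_t)$ with $\mathbf{d}_1(m^\sigma(t_1),m^\sigma(t_2))\le\mathbb{E}|Y_{t_1}-Y_{t_2}|$ and the linear growth of $b^\sigma$. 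The only genuine divergence is point 3: the paper reads $\int(|x|^2+|v|^2)\,dm^\sigma(t)=\mathbb{E}|Y_t|^2$ and invokes the standard SDE moment estimate of Karatzas--Shreve, whereas you test the Fokker--Planck equation directly with the quadratic weight and run Gronwall; this is the deterministic counterpart of applying It\^o's formula to $|Y_t|^2$, equally valid and more self-contained, at the price of the truncation argument you correctly flag (made legitimate by the sub-exponential bound of Lemma~\ref{visco:buonapos} and the linear growth of $D_vu^\sigma$). One caution on point 1: the comparison/maximum principle on all of $\R^{2N}$ with a drift growing linearly in $v$ is the delicate step, and ``sub-exponential growth of $m^\sigma$'' by itself is not quite a proof; the paper anchors this on the Da Lio--Ley comparison theorem for equations with unbounded coefficients and quadratic-growth (sub/super)solutions, and you should either cite such a result or supply a barrier (Phragm\'en--Lindel\"of type) argument; your alternative $L^p$-energy route with cut-offs would also close this gap.
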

\begin{proof}
Point 1. In order to prove this $L^\infty$ estimate, we  argue as in \cite[Theorem 5.1]{C13}. We note that
\begin{displaymath}
\diver _v (m^\sigma D_v u^\sigma)=D_vm^\sigma\cdot D_vu^\sigma +m^\sigma(\Delta_{v}u^\sigma)\leq D_vm^\sigma\cdot D_vu^\sigma +Cm^\sigma,  
\end{displaymath}
because of the semi-concavity of~$u$ established in Lemma~\ref{visco:lemma5.2} and the positivity of $m^\sigma$. Therefore, from assumption ${\rm (H2)}$, the function~$m^\sigma$ satisfies
\begin{displaymath}
\partial_t m^\sigma-\sigma \Delta_{x,v} m^\sigma-v \cdot D_xm^\sigma-D_vu^\sigma\cdot D_vm^\sigma -Cm^\sigma\leq 0,\qquad m^\sigma(x,v, 0)\leq C.  
\end{displaymath}

Then,  using $w=Ce^{ C t}$ as a supersolution (recall that $C$ is independent of $\sigma$), we obtain that 
$\|m^\sigma\|_\infty\leq Ce^{ C T}$, using the comparison principle proved in~\cite[Theorem 2.1]{DLL}.

To prove Points 2 and 3  as in the proof of \cite[Lemma 3.4 and 3.5]{C}, it is convenient to introduce the stochastic differential equation
\begin{equation}\label{11C}
dY_t=  b^\sigma(Y_t,t) dt +\sqrt{2\sigma} dB_t,\qquad Y_0=Z_0,
\end{equation}
where  $Y_t=(X_{t}, V_{t})$,  $ b^\sigma (x,v,t)= (-v, D_vu^\sigma(x,v,t))$,
$B_t$ is a standard $2N$-dimensional Brownian motion, and ${\mathcal L}(Z_0)=m_0$. By standard arguments, setting
\begin{equation}
\label{mstoch}
m^\sigma(t):={\mathcal L}(Y_t),
\end{equation}
we know that $m^\sigma(t)$ is absolutely continuous with respect to Lebesgue measure, and that if $m^\sigma(\cdot, \cdot,t)$ is the density of $m^\sigma(t) $, then $m^\sigma$   is the  weak solution to~\eqref{eq:MFGv}-(ii) with $m^\sigma|_{t=0}=m_0$ (from Ito's Theorem, since $ b^\sigma$ has at most linear growth with respect to $(x,v)$, Proposition 3.6 Chapter 5 \cite{KS}, p.303, and the book \cite{Kr}). Here  again, we have used the estimate on $|D_vu^\sigma|$ given in Lemma \ref{visco:lemma5.2}.
 
\begin{description}
  \item{Point 3:}
    Noting that
    $$\int_{\re^{2N}}(|x|^2+|v|^2) dm^\sigma(t)(x,v)= \EEE(|Y_t|^2),$$ the desired estimate
    can be obtained by applying Estimate 3.17 of Problem 3.15, p. 306,  (the solutions are at p. 389) of \cite{KS} with $m=1$.
    \item{Point 2:}
For $t_2\geq t_1$, it is well known that
$${\bf d}_1(m^\sigma(t_1),m^\sigma(t_2))\leq  \EEE(|Y_{t_1}-Y_{t_2}|).$$
Recall also that for a suitable constant $C$,
$$\vert b^\sigma(Y_\tau, \tau)\vert \leq C(\vert V_\tau\vert+1).$$
  The latter two observations imply that
\begin{displaymath}
  \begin{array}[c]{rcl}
\mathbb{E}(|Y_{t_1}-Y_{t_2}|)&\leq& \ds  \mathbb{E}\left(\int_{t_1}^{t_2} \vert b^\sigma(Y_\tau, \tau)\vert d\tau+ \sqrt {2\sigma}|B_{t_2}-B_{t_1}|\right)\\
&\leq&  \ds  \mathbb{E}\left(C \int_{t_1}^{t_2} (\vert V_\tau \vert +1)\vert d\tau+ \sqrt {2\sigma} |B_{t_2}-B_{t_1}|\right)\\
&\leq& \ds C\left(\mathbb{E}\left(\int_{t_1}^{t_2} (\vert V_\tau\vert ^2 +1)\vert  d\tau\right)\right) ^{\frac 1 2 } \sqrt{t_2-t_1} + \sqrt {2\sigma}\sqrt{t_2-t_1} \\
&\leq & \ds C\left(\mathbb{E}\left(\max_{[{t_1},{t_2}]}\vert Y_\tau\vert^2\right) +1\right) ^{\frac 1 2 }     (t_2-t_1)+ \sqrt {2\sigma}\sqrt{t_2-t_1} .
\end{array}
\end{displaymath}
where we have used estimate \cite[(3.17) p. 306]{KS}.
\end{description}
\end{proof}

\begin{proof}[Proof of Proposition \ref{VV}]
The arguments are similar to those in  the proof of \cite[Theorem 5.1]{C13} (see also \cite[Theorem 4.20]{C}). 
Lemma~\ref{visco:lemma5.2} implies that possibly after  the extraction of a subsequence, 
$u^\sigma$  locally uniformly converges to some function~$u$, 
which is Lipschitz continuous with respect to $x$, locally Lipschitz continuous with respect to $v$, and 
$Du^\sigma\to Du$ a.e. (because of the semi-concavity estimate of Lemma~\ref{visco:lemma5.2} and \cite[Theorem 3.3.3]{CS}).
By standard stability result for viscosity solutions, the function~$u$ is a viscosity solution of  \eqref{HJ}.

On the other hand, the function $m^\sigma$ satisfies the estimates stated in Lemma~\ref{visco:lemma4}:
\begin{enumerate}
\item from point 3,  $m^\sigma(t)$ is bounded in $\mathcal P_2(\R^{2N})$ uniformly in $\sigma\in[0,1]$ and $t\in [0,T]$
\item from points 2 and 3 , $m^\sigma$ is  bounded in  $C^{1/2}([0,T];\mathcal P_1(\R^{2N}))$ uniformly with respect to $\sigma\in[0,1]$.
\end{enumerate}
Recalling that  the subsets of   $\mathcal P_1(\R^{2N})$ whose  elements have uniformly bounded second moment
  are relatively compact in $\mathcal P_1(\R^{2N})$, see for example \cite[Lemma 5.7]{C}, we can apply Ascoli-Arzel{\`a} theorem:
we may extract a sequence (still indexed by~$\sigma$ for simplicity) such that $\sigma\to 0^+$ and   $m^\sigma$ converges to some $m\in C^{1/2}([0,T];\mathcal P_1(\R^{2N}))$ in the $C([0,T];\mathcal P_1(\R^{2N}))$ topology. Moreover, from point 1 in Lemma~\ref{visco:lemma4} and  Banach-Alaoglu theorem, $m$ belongs to 
$L^\infty_{\rm loc}((0,T)\times\re^{2N})$ and the sequence $m^\sigma$ converges to $m$ in $L^\infty_{\rm loc}((0,T)\times\re^{2N})$-weak-$*$.
\\
Therefore, by passing to the limit, we immediately obtain that $m|_{t=0}=m_0$, $\|m\|_\infty \le K$ and that
${\bf d}_1(m(t_1),m(t_2))\leq K(t_2-t_1)^{1/2}$, $\forall t_1\le t_2\in[0,T]$.
\\
Let us prove that for all $t\in [0,T]$,
\begin{equation}
  \label{eq:7}
\ds \int_{\re^{2N}}(|x|^2+|v|^2)\,dm(t)(x,v)\leq K \left(\int_{\re^{2N}} (|x|^2+|v|^2)\,dm_0(x,v)+1 \right).
\end{equation}
For that, let us consider the increasing  sequence of functions defined on $\R_+$: $\phi_n(\rho)= 1\wedge ((n+1-\rho)\vee 0)$.
We know from point 3 in Lemma~\ref{visco:lemma4}, that for all $t\in [0,T]$,
\begin{equation}
  \label{eq:5}
\ds \int_{\re^{2N}}(|x|^2+|v|^2)   \phi_n(|x|^2+|v|^2)  m^\sigma(x,v,t)   dxdv 
\leq K \left(\int_{\re^{2N}} (|x|^2+|v|^2)\,dm_0(x,v)+1 \right).
\end{equation}
For a fixed $n$, we can pass to the limit in (\ref{eq:5}) thanks to the $L^\infty_{\rm loc}((0,T)\times\re^{2N})$-weak-$*$ convergence established above. We obtain:
    \begin{equation}
\label{eq:6}
\ds \int_{\re^{2N}}(|x|^2+|v|^2)   \phi_n(|x|^2+|v|^2)  m(x,v,t)   dxdv 
\leq K \left(\int_{\re^{2N}}|(|x|^2+|v|^2)\,dm_0(x,v)+1 \right).
\end{equation}
We then pass to the limit as $n\to +\infty$ thanks to Beppo Levi monotone convergence theorem, and obtain (\ref{eq:7}).
\\
Finally,  $m^\sigma$ is a solution to \eqref{eq:MFGv}-(ii),
\begin{displaymath}
\int_0^T\int_{\re^{2N}}m^\sigma\left(-\partial_t \psi -\sigma \Delta_{x,v} \psi+D_v\psi\cdot D_vu^\sigma - v \cdot D_x\psi \right)\,dxdv\, dt=0  
\end{displaymath}
 for any $\psi\in C^\infty_c((0,T)\times\re^{2N})$.
Applying the dominated convergence theorem, we infer $D_v\psi\cdot D_vu^\sigma\to D_v\psi\cdot D_vu$ in $L^1$ as $\sigma \to 0^+$ because $D_v u^\sigma$ are locally bounded (see Lemma~\ref{visco:lemma5.2}), $D_vu^\sigma\to D_vu$ a.e. and $\psi$ has a compact support.
 Letting $\sigma \to 0^+$, we conclude from the $L^\infty_{\rm loc}$-weak-$*$ convergence of~$m^\sigma$ and the convergence  $Du^\sigma\to Du$ a.e. that the function~$m$ solves \eqref{continuity} in the sense of Definition~\ref{defsolmfg}.
\end{proof}

\begin{remark}
  \label{sec:existence-solution}
Note that we have just proven that all the estimates  on $u^\sigma$ contained in Lemma \ref{visco:lemma5.2} hold for $u$. 
These estimates have also been obtained directly in the proof of Lemma~\ref{L1}. Similarly,  all the estimates  on $m^\sigma$ contained in Lemma \ref{visco:lemma4} hold for $m$. 
\end{remark}

\subsection{Uniqueness of the solution}\label{uniq}

We now deal with uniqueness for  ~\eqref{continuity}.
\begin{proposition}\label{!FP}
Under assumptions $\rm{(H)}$, the function $m$ found in Proposition \ref{VV} is the  unique 
solution to problem \eqref{continuity}   in the sense of Definition~\ref{defsolmfg} such that \\
$m\in C^{\frac 1 2} ([0,T];  \mathcal P_1(\re^{2N})) \cap L^\infty((0,T);\mathcal P_2(\re^{2N}))$.
\\
Moreover, $m$ satisfies:
\begin{equation}\label{ambrosio2}
\int_{\re^{2N}} \phi(x,v)   \,  m(x,v,t) dxdv 
=\int_{\re^{2N}}\phi(\overline{ \gamma}_{x,v}(t))\,m_0(x,v)\, dxdv, \qquad \forall \phi\in C^0_b(\R^{2N}), \, \forall t\in[0,T],
\end{equation}
where, for a.e. $(x,v)\in\re^{2N}$,  $\overline{\gamma}_{x,v}$ is the solution to \eqref{dyn}.
\end{proposition}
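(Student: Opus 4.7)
The plan is to exploit the superposition principle of Ambrosio--Gigli--Savar\'e~\cite[Theorem 8.2.1]{AGS} to represent any solution $m$ in the prescribed class as the push-forward through the evaluation maps $e_t$ of a suitable probability measure $\eta$ on $C([0,T];\R^{2N})$, and then to use the optimal synthesis of Lemma~\ref{B}, together with uniqueness of the optimal trajectory from $m_0$-a.e. initial datum, to show that $\eta$ is concentrated on the optimal flow $(x,v)\mapsto\overline{\gamma}_{x,v}$ of~\eqref{dyn}. The representation formula~\eqref{ambrosio2} and uniqueness will then be immediate: two solutions would share the same $\eta$, and the right-hand side of~\eqref{ambrosio2} depends only on $m_0$ and on the value function $u$ (uniquely determined by $\overline m$ via Proposition~\ref{exuniq}).

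The first step is to rewrite \eqref{continuity} as $\partial_t m+\mathrm{div}_{x,v}(m\,b)=0$ with $b(x,v,t):=(v,-D_v u(x,v,t))$, extending $b$ by $0$ outside the set where $D_v u$ exists so that it is Borel. Lemma~\ref{L1} gives $|b|\le C(1+|v|)$, and since $m\in L^\infty((0,T);\mathcal P_2(\R^{2N}))$, the integrability requirement of~\cite[Theorem 8.2.1]{AGS} is met. The superposition principle then yields $\eta$ such that $m(t)=(e_t)_\#\eta$ for every $t\in[0,T]$ and such that $\eta$-a.e. curve $\gamma=(\xi,\eta)$ is absolutely continuous and satisfies
\[
\xi'(s)=\eta(s),\qquad \eta'(s)=-D_v u(\xi(s),\eta(s),s)\quad\text{for a.e. }s\in[0,T].
\]
In particular, $u(\xi(s),\cdot,s)$ is differentiable at $\eta(s)$ for a.e. $s$, so Lemma~\ref{B}-(1) applies and $\gamma$ is an optimal trajectory for $u(\gamma(0),0)$.

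The second step is to upgrade this to a pointwise identification. By Lemma~\ref{semi-concav}, $u(\cdot,\cdot,0)$ is semi-concave and hence differentiable at Lebesgue-a.e. point; since $m_0$ is absolutely continuous by assumption~(H4), the differentiability holds at $m_0$-a.e. $(x,v)$. At such points, Lemma~\ref{4.9}-(3) forces $\mathcal U(x,v,0)$ to be a singleton, so $\overline{\gamma}_{x,v}$ is the unique optimal trajectory starting at $(x,v)$ (of class $C^1$ by Corollary~\ref{coro:regularity}); Corollary~\ref{th:nobifurc} guarantees that no bifurcation can occur at later times. Disintegrating $\eta=\int\eta_{x,v}\,dm_0(x,v)$ with respect to $e_0$, the conclusion of the first step implies that $\eta_{x,v}$-a.e. curve is an optimal trajectory starting at $(x,v)$, hence equals $\overline{\gamma}_{x,v}$ for $m_0$-a.e. $(x,v)$. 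Therefore $\eta=\int\delta_{\overline{\gamma}_{x,v}}\,dm_0(x,v)$, and pushing forward by $e_t$ produces \eqref{ambrosio2} for every $\phi\in C^0_b(\R^{2N})$ and every $t\in[0,T]$; uniqueness of $m$ follows.

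The main obstacle will be the first step: one must carefully verify that the hypotheses of the superposition principle are met for a drift which is only defined almost everywhere and has merely linear growth, and then extract from the a.e.-ODE supplied by the principle the fact that $u(\xi(s),\cdot,s)$ is differentiable at $\eta(s)$ for a.e. $s$, which is exactly what is needed to invoke Lemma~\ref{B}. Once this measurability/integrability step is in place, the remainder of the argument is a clean combination of semi-concavity of $u$, the feedback characterization of optimal controls from Proposition~\ref{prop:pontriagin}, and the no-bifurcation property of Corollary~\ref{th:nobifurc}.
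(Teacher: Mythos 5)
Your proposal is correct and follows essentially the same route as the paper: check the $L^2(m)$-integrability of the drift via $|D_vu|\le C(1+|v|)$ and the uniform second-moment bound, apply the superposition principle of \cite[Theorem 8.2.1]{AGS} and disintegrate over $m_0$, then combine the a.e.\ differentiability of $u(\cdot,\cdot,0)$ (Lipschitz/semi-concavity) with the optimal synthesis of Lemma~\ref{B} and the uniqueness of optimal trajectories from Lemma~\ref{4.9}-(3) to get $\eta_{x,v}=\delta_{\overline{\gamma}_{x,v}}$ for $m_0$-a.e.\ $(x,v)$, whence \eqref{ambrosio2} and uniqueness. The point you single out as the main obstacle (reading off condition \eqref{eq:3} from the a.e.\ ODE given by the superposition principle) is treated in the paper at the same level of detail: the support of $\eta_{x,v}$ is described as the set of solutions of the ODE driven by $-D_vu$, and Lemma~\ref{B}-(2) is invoked directly at the $m_0$-a.e.\ points where $u(\cdot,\cdot,0)$ is differentiable.
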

\begin{proof}[Proof of Proposition \ref{!FP}]
The proof is similar to that of \cite[Proposition A.1]{CH}, which relies on the superposition principle~\cite[Theorem 8.2.1]{AGS}.
Let $\Gamma_T$ denote the set of continuous curves in $\R^{2N}$, namely $\Gamma_T=C([0,T];\re^{2N})$.
 For any $t\in[0,T]$, we introduce the evaluation map: $e_t: \R^{2N}\times\Gamma_T\to \re^{2N}$, $e_t(x,v,\gamma):=\gamma(t)$.
Hereafter, when we write ``for a.e.'' without specifying the measure, we intend ``with respect to the Lebesgue measure''.

Let $m\in C^{1/2}([0,T];{\mathcal P}_1(\re^{2N}))\cap L^\infty((0,T);{\mathcal P}_2(\re^{2N}))$ 
be a solution of problem~\eqref{continuity} in the sense of Definition~\ref{defsolmfg}. Recall the notation $b(x,v,t)= (-v, D_vu(x,v,t))$.
The estimate  (8.1.20) in chapter 8 of \cite{AGS} is fulfilled: indeed,  
\begin{eqnarray*}
\int_0^T\int_{\re^{2N}}|b(x,v,t)|^2dm(t)(x,v)&\leq& C\int_0^T\int_{\re^{2N}}|v|^2dm(t)(x,v)\\
&&+C\int_0^T\int_{\re^{2N}}|D_vu(x,v,t)|^2dm(t)(x,v)\leq C,
\end{eqnarray*}
where the last inequality comes from the estimates on $D_vu$ and $m$ in Remark 4.2 (recall that $m(t)$ is a probability measure).
Therefore, the assumptions of the superposition principle are fulfilled (see \cite[Theorem 8.2.1]{AGS} and also \cite[pag. 182]{AGS}). 
 The latter and the disintegration theorem (see \cite[Theorem 5.3.1]{AGS}) entail that there exist a probability measure 
$\eta$  on $\R^{2N}\times \Gamma_T$   and for $m_0$-almost every  $(x,v)\in\re^{2N}$, a probability measure on $\eta_{x,v}$ on~$\Gamma_T$, such that
\begin{description}
  \item{i)} $e_t\#\eta =m_t $, i.e., for every bounded and continuous real valued function $\psi$ defined on $\R^{2N}$, for every $t\in [0,T]$,
    \begin{displaymath}
      \int_{\R^{2N}} \psi(x,v)dm_t(x,v)= \int_{\R^{2N}\times \Gamma_T} \psi(\zeta(t)) d\eta (x,v,\zeta).
    \end{displaymath}
In particular,  $ e_0\#\eta =m_0$.
 \item{ii)}
   \begin{displaymath}
\eta =\displaystyle\int_{\re^{2N}}\eta_{x,v}\, dm_0(x,v), 
   \end{displaymath}
i.e. for every  bounded Borel function $f: \R^{2N}\times \Gamma_T \to \R$, 
\begin{displaymath}
  \int_{\R^{2N}\times \Gamma_T} f(x,v, \zeta) d\eta (x,v,\zeta)= \int_{\R^{2N}} \left(\int_{\Gamma_T}   f(x,v,\zeta) d\eta_{x,v}(\zeta)
\right) dm_0(x,v).
\end{displaymath}

\item{iii)} For $m_0$-almost every $(x,v)\in \R^{2N}$, the support of $\eta_{x,v} $ is contained in the set 
  \begin{equation}
    \label{eq:8}
\left   \{  \zeta\in {\rm AC} \left([0,T];  \R^{2N} \right) :    \zeta(t) = (\xi(t), \eta(t)): \left|
     \begin{array}[c]{l}
       \xi(0)=x,\;\eta(0)=v,\\
       \xi'(t)=\eta(t),\\
       \eta'(t)= -D_vu(\xi(t),\eta(t),t). 
     \end{array}
\right.  \right\}.
 \end{equation}
  \end{description}
Recall that in the present case, $m_0$ is absolutely continuous (from assumption ${\rm (H4)}$); hence, since for all $t\in [0,T]$, $u(\cdot,\cdot, t)$ is Lipschitz continuous, the optimal synthesis in Lemma~\ref{B} ensures that for a.e. $(x,v)\in\re^{2N}$, (\ref{eq:2})-\eqref{OS} (with $t=0$ in the present context) has a unique  solution $\overline{\gamma}_{x,v}$, because it is the optimal trajectory for the cost $J_t$. 
 Therefore, for a.e. $(x,v)\in\re^{2N}$,  the set in (\ref{eq:8}) is a singleton, or in equivalent manner, 
$\eta_{x,v}$ coincides with $\delta_{\overline{\gamma}_{x,v}}$.
In conclusion, for any function $\psi\in C^0_b(\re^{2N})$, 
\begin{displaymath}
\begin{array}[c]{rcl}
\ds \int_{\re^{2N}} \psi (x,v)\, m(x,v,t) dxdv&=&\ds \int_{\R^{2N}\times\Gamma_T} \psi( e_t(\zeta)) d\eta(x,v,\zeta)
\\
&=&\ds \int_{\R^{2N}} \left(\int_{\Gamma_T}    \psi( e_t(\zeta)) d\eta_{x,v}(\zeta)
\right) dm_0(x,v)\\
&=& \ds 
\int_{\R^{2N}}    \psi( e_t(\overline{\gamma}_{x,v})) dm_0(x,v)\\
&=& \ds \int_{\R^{2N}}    \psi(\overline{\gamma}_{x,v} (t)) m_0(x,v) dxdv.
  \end{array}
\end{displaymath}
This shows that $m$ is uniquely defined as the image of $m_0$ by the flow of (\ref{dyn}).
\end{proof}

\begin{proof}[Proof of Theorem~\ref{prp:m}]{\empty}
Existence of $m$ comes from Proposition~\ref{VV},  uniqueness and the representation formula come from Proposition~\ref{!FP}.
\end{proof}

%
%
%
%
\section{Proof of the main results}\label{sect:MFG}
\begin{proof}[Proof of Theorem~\ref{thm:main}]{\empty}
For point 1, we  argue as in the proof of \cite[Theorem 4.1]{C}. Consider the set ${\mathcal C} :=\{m\in C([0,T]; {\mathcal P}_1(\R^{2N}))\mid m(0)=m_0\}$ endowed with the norm of~$C([0,T]; {\mathcal P}_1(\R^{2N}))$ and observe that it is a closed and convex subset of~$C([0,T]; {\mathcal P}_1(\R^{2N}))$. We also introduce a map $\cT$ as follows: to any $m\in {\mathcal C}$, we associate the solution~$u$ to problem~\eqref{HJ} with $\overline m=m$ and to this $u$ we associate the solution~$\mu=:\cT(m)$ to problem \eqref{continuity} which, by Proposition~\ref{VV} belongs to~${\mathcal C}$. Hence,~$\cT$ maps~${\mathcal C}$ into itself.
We claim that the map~$\cT$ has the following properties:
\begin{itemize}
\item[(a)] $\cT$ is a continuous map with respect to the norm of~$C([0,T]; {\mathcal P}_1(\R^{2N}))$
\item[(b)] $\cT$ is a compact map.
\end{itemize}
Assume for the moment that these properties are true. In this case, Schauder fixed point Theorem ensures the existence of a fixed point for~$\cT$, namely a solution to system~\eqref{eq:MFGAs}. Therefore it remains to prove properties $(a)$ and $(b)$.

Let us now prove~$(a)$.  Let~$(m_n)_n$ be a sequence in~ ${\mathcal C}$ such that $m_n\to m$ in the $C([0,T]; {\mathcal P}_1(\R^{2N}))$ topology. We want to prove that $\cT(m_n)\to \cT(m)$ in  $C([0,T]; {\mathcal P}_1(\R^{2N}))$.
We observe that hypothesis ${\rm (H3)}$ ensures that the functions~$(x,v,t)\mapsto F[m_n(t)](x,v)$ and~$(x,v)\mapsto G[m_n(T)](x,v)$ converge locally uniformly to the map~$(x,v,t)\mapsto F[m(t)](x,v)$ and respectively~$(x,v)\mapsto G[m(T)](x,v)$. Moreover, Lemma~\ref{L1} entails that the solutions~$u_n$ to problem~\eqref{HJ} with $\overline m=m_n$ are locally uniformly bounded and locally uniformly Lipschitz continuous. Therefore, by standard stability results for viscosity solutions, the sequence~$(u_n)_n$ converges locally uniformly to viscosity the solution~$u$ to problem~\eqref{HJ} with $\overline m=m$. Moreover, from Lemma~\ref{semi-concav}, the functions~$u_n$ are uniformly semi-concave; hence, by~\cite[Theorem 3.3.3]{CS}, $D u_n$ converge a.e. to $Du$.

By Proposition~\ref{VV} and Remark~\ref{sec:existence-solution}, the function~$\cT(m_n)$ verifies the bounds in Lemma~\ref{visco:lemma4} with a constant~$K$ independent of~$n$. Hence, the sequence~$(\cT(m_n))_n$ is uniformly bounded in~$C([0,T]; {\mathcal P}_1(\R^{2N}))$ (by Lemma~\ref{visco:lemma4}-(3) and Remark \ref{sec:existence-solution}, and because the subsets of   $\mathcal P_1(\R^{2N})$ whose  elements have uniformly bounded second moment are relatively compact in $\mathcal P_1(\R^{2N})$), and uniformly H{\"o}lder continuous in time with values in  ${\mathcal P}_1(\R^{2N})$  (by Lemma~\ref{visco:lemma4}-(2) and Remark \ref{sec:existence-solution}). Therefore, by Ascoli-Arzel{\`a} and Banach-Alaoglu theorems, there exists a subsequence~$(\cT(m_{n_k}))_k$ which converges to some $\mu\in C([0,T]; {\mathcal P}_1(\R^{2N}))$ in the $C([0,T]; {\mathcal P}_1(\R^{2N}))$-topology and in the $L^\infty_{\rm loc}((0,T)\times\re^{2N})$-weak-$*$ topology. As in Remark~\ref{sec:existence-solution}, $\mu$ verifies the bounds in~Lemma~\ref{visco:lemma4} and $\mu(0)=m_0$.

Observe that $\cT(m_{n_k})$ solves problem~\eqref{continuity} with $u$ replaced by~$u_{n_k}$, 
\begin{displaymath}
\int_0^T\int_{\re^{2N}}\cT(m_{n_k})\left(-\partial_t \psi +D_v\psi\cdot D_vu_{n_k}-v\cdot D_x\psi \right)\,dxdv\, dt=0,  
\end{displaymath}
for any $\psi\in C^\infty_c((0,T)\times\re^{2N})$. Passing to the limit as $k\to\infty$, we get that $\mu$ is a solution to~\eqref{continuity}. By the uniqueness result  established in Proposition~\ref{!FP}, we deduce that $\mu=\cT(m)$, and  that the whole sequence~$(\cT(m_n))_n$ converges to~$\cT(m)$.

Let us now prove $(b)$; since~${\mathcal C}$ is closed, it is enough to prove that $\cT({\mathcal C})$ is a precompact subset of $C([0,T]; {\mathcal P}_1(\R^{2N}))$. 
Let~$(\mu_n)_n$ be a sequence in~$\cT({\mathcal C})$ with $\mu_n=\cT(m_n)$ for some~$m_n\in{\mathcal C}$;
 we wish to prove that, possibly for a subsequence,  $\mu_n$ converges to some $\mu$ in the $C([0,T]; {\mathcal P}_1(\R^{2N}))$-topology as $n\to\infty$.

By Remark~\ref{sec:existence-solution}, the functions~$\cT(m_n)$ satisfy the estimates in Lemma~\ref{visco:lemma4} with the same constant~$K$. Since the subsets of   $\mathcal P_1(\R^{2N})$ whose  elements have uniformly bounded second moment are relatively compact in $\mathcal P_1(\R^{2N})$,  Lemma~\ref{visco:lemma4}-(3) ensures that the sequence $(\cT(m_n))_n$ is uniformly bounded. Moreover, Lemma~\ref{visco:lemma4}-(2) yields that the sequence $(\cT(m_n))_n$ is uniformly  bounded in $C^{1/2}([0,T];\cP_1(\R^{2N}))$ and  $L^\infty(0,T;\cP_2(\R^{2N}))$. 
By arguing as in the proof of Proposition~\ref{VV}, we obtain that, possibly for a subsequence (still denoted by~$\cT(m_n)$), $\cT(m_n)$ converges to some~$\mu$ in the $C([0,T];{\mathcal P}_1(\R^{2N}))$-topology.

2.
Theorem \ref{prp:m} ensures that, if $(u,m)$ is a solution of \eqref{eq:MFGAs},
 for any function $\psi\in C^0_b(\re^{2N})$,
\begin{equation}\label{reprfor}
\int_{\re^{2N}} \psi(x,v)\, m(x,v,t)dxdv=\int_{\re^{2N}}\psi(\overline{ \gamma}_{x,v}(t))m_0(x,v)\, dxdv
\end{equation}
where $\overline{\gamma}_{x,v}$ is the solution of (\ref{dyn}) (uniquely defined for a.e. $(x,v)\in\R^{2N}$).
\end{proof}

\begin{proof}[Proof of Proposition~\ref{prp:!}]
Let $(u_1,m_1)$ and $(u_2,m_2)$ be two solutions to system~\eqref{eq:MFGAs} in the sense of Definition~\ref{defsolmfg}. By Theorem~\ref{prp:m}, for $i=1,2$, the function~$m_i$ satisfies~\eqref{ambrosio} with $\overline {\gamma}_{x,v}$ replaced by $\overline {\gamma}_{x,v}^i$, which for a.e. $(x,v)$ is the solution to~\eqref{dyn} with~$u$ replaced by~$u_i$.

Moreover, let us recall from Lemma~\ref{L1}-(1) that the Lipschitz constant of $u_i$ has an at most linear growth in~$v$. By Gronwall's Lemma we obtain that $\overline{\gamma}_{x,v}^i$ is bounded. Since $m_0$ has compact support, we deduce the function~$m_i$ has compact support. In particular, we obtain that $\overline u:=u_1-u_2$ is an admissible test-function for the continuity equation satisfied by~$m_i$.

Taking advantage of the convexity of our Hamiltonian~$\mathcal{H}$ and of the monotonicity of the couplings~$F$ and~$G$, we can conclude the proof following the same arguments as in~\cite[Theorem 2.5]{ll07}.
\end{proof}
\section{Appendix}
Let us now consider the second order MFG system: for a positive number $\sigma$,
\begin{equation}\label{MFG2order}
\left\{
\begin{array}{lll}
(i)\ -\partial_t u-\sigma \Delta_{x,v} u-v\cdot D_xu+\frac1 2 \vert D_vu\vert^2-\frac1 2 \vert v \vert^2-l(x,v)  =F[m](x, v),\ & \textrm{in }\re^{2N}\times (0,T),\\
(ii)\ \partial_t m-\sigma \Delta_{x,v} m-\diver _v (m D_v u)-v\cdot D_xm=0, & \textrm{in }\re^{2N}\times (0,T),\\
(iii)\ m(x,v, 0)=m_0(x,v),\quad  u(x,v,T)=G[m(T)](x, v),& \textrm{on }\re^{2N}.
\end{array}\right.
\end{equation}
We aim at proving the existence and uniqueness of a classical solution to system~\eqref{MFG2order}.
We shall see that these results are byproducts of the estimates that we have already used above in the vanishing viscosity limit.  More
precisely, the properties obtained in Section~\ref{sect:c_eq} will play a crucial role in what follows.

\begin{theorem}\label{thm:2order}
Under our standing assumptions, there exists a classical solution to problem \eqref{MFG2order}. Moreover, if the coupling costs~$F$ and~$G$ satisfy~\eqref{monot}, the solution is unique. 
\end{theorem}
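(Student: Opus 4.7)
The plan is to mimic the scheme used for Theorem~\ref{thm:main}: Schauder's fixed point theorem for existence and the Lasry--Lions duality identity for uniqueness. The situation is in fact easier than in the first order case because \eqref{MFG2order} is uniformly parabolic: the Hamilton--Jacobi equation \eqref{MFG2order}-(i) has a unique classical solution $u^\sigma$ whose gradient $D_vu^\sigma$ is a genuine function by Lemma~\ref{visco:lemma5.2}, and the linear, non--degenerate Fokker--Planck equation \eqref{MFG2order}-(ii) has a unique positive classical solution by Lemma~\ref{visco:buonapos}. In particular one does not need to invoke the superposition principle.

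\emph{Existence.} Fix $\sigma>0$ and consider the closed convex set ${\mathcal C}:=\{m\in C([0,T];{\mathcal P}_1(\R^{2N}))\mid m(0)=m_0\}$. Define the map $\cT^\sigma:{\mathcal C}\to{\mathcal C}$ by assigning to $\overline m\in{\mathcal C}$ first the solution $u^\sigma$ of \eqref{MFG2order}-(i) with couplings $F[\overline m]$ and $G[\overline m(T)]$, and then the solution $m^\sigma=:\cT^\sigma(\overline m)$ of the linear Fokker--Planck equation \eqref{MFG2order}-(ii) driven by $D_vu^\sigma$. By Lemma~\ref{visco:lemma4}, the image $\cT^\sigma({\mathcal C})$ is uniformly bounded in $C^{1/2}([0,T];{\mathcal P}_1(\R^{2N}))\cap L^\infty(0,T;{\mathcal P}_2(\R^{2N}))$; since subsets of ${\mathcal P}_1(\R^{2N})$ with uniformly bounded second moment are relatively compact in ${\mathcal P}_1$, Ascoli--Arzel\`a yields compactness of $\cT^\sigma$ into $C([0,T];{\mathcal P}_1(\R^{2N}))$. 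For continuity: if $\overline m_n\to\overline m$ in this topology, then assumption (H3) gives $C^2$ convergence of the couplings and of the terminal datum, so interior parabolic Schauder estimates yield $u^\sigma_n\to u^\sigma$ and $D_vu^\sigma_n\to D_vu^\sigma$ locally uniformly; linear stability for the non--degenerate Fokker--Planck equation then provides $\cT^\sigma(\overline m_n)\to\cT^\sigma(\overline m)$. Schauder's fixed point theorem produces a fixed point, which is a classical solution of~\eqref{MFG2order} by a parabolic bootstrap.

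\emph{Uniqueness.} Assume \eqref{monot} and let $(u_1,m_1)$, $(u_2,m_2)$ be two classical solutions. Multiply the difference of the two Hamilton--Jacobi equations by $m_1-m_2$, the difference of the two Fokker--Planck equations by $u_1-u_2$, subtract and integrate over $\R^{2N}\times(0,T)$. After integration by parts, the viscous and $x$--transport terms cancel pairwise, while the quadratic Hamiltonian combines with the $v$--divergence term via the algebraic identity $\tfrac12(|p|^2-|q|^2)(m_1-m_2)-(p-q)\cdot(m_1p-m_2q)=-\tfrac12(m_1+m_2)|p-q|^2$ (with $p=D_vu_1$, $q=D_vu_2$). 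Using $u_i(T)=G[m_i(T)]$ and $m_1(0)=m_2(0)=m_0$, the identity reduces to
\begin{multline*}
\int_{\R^{2N}}(G[m_1(T)]-G[m_2(T)])\,d(m_1-m_2)(T) + \int_0^T\!\!\int_{\R^{2N}}(F[m_1(t)]-F[m_2(t)])\,d(m_1-m_2)(t)\,dt \\ + \tfrac12\int_0^T\!\!\int_{\R^{2N}}(m_1+m_2)|D_vu_1-D_vu_2|^2\,dxdv\,dt = 0.
\end{multline*}
All three terms are non--negative by \eqref{monot}, and the second is strictly positive unless $m_1\equiv m_2$; hence $m_1\equiv m_2$, and then $u_1\equiv u_2$ by uniqueness of the Hamilton--Jacobi solution.

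\emph{Main obstacle.} The delicate point is the justification of the integrations by parts in the uniqueness computation, since $|D_vu_i|$ and $|v|$ grow linearly while $u_i$ grows quadratically in $v$. This will be controlled by the uniform second--moment bound of Lemma~\ref{visco:lemma4}--(3) together with the fast (Gaussian--type) decay of $m^\sigma$ inherited from the stochastic representation~\eqref{mstoch}; if needed, one first truncates with a smooth cutoff $\chi_R(x,v)$, estimates the commutators by the same moment bound, and lets $R\to\infty$.
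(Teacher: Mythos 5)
Your proposal is correct and follows essentially the same route as the paper: Schauder's fixed point theorem built on Lemmas~\ref{visco:lemma5.2}, \ref{visco:buonapos} and \ref{visco:lemma4} for existence, and the Lasry--Lions duality computation justified by a cutoff $\phi_R$ together with the quadratic growth of $u_i$, the linear growth of $D_vu_i$ and the uniform second-moment bound $m_i\in L^\infty((0,T);\mathcal{P}_2(\R^{2N}))$ for uniqueness, which is exactly the paper's argument in the Appendix. The only cosmetic difference is your appeal to ``Gaussian-type decay'' of $m^\sigma$, which is neither proved nor needed: the truncation-plus-moment-bound fallback you describe is precisely what suffices (and what the paper does).
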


\begin{proof}
Our arguments are reminiscent of those used in the proof of~\cite[Theorem 3.1]{C}. We introduce
\begin{equation*}
  \mathcal{C}:=\left\{m\in C^0([0,T]; \mathcal{P}_1(\R^{2N})):\quad m(0)=m_0\right\}
\end{equation*}
which is a non-empty closed and convex subset of $C^0([0,T]; \mathcal{P}_1(\R^{2N}))$. We define a map $\mathcal{T}$ as follows: for any $m\in \mathcal{C}$, let $u$ be the unique solution to \eqref{MFG2order}-$(i)$ and $u(x,v,T)=G[m(T)](x, v)$ found in Lemma~\ref{visco:lemma5.2}; we set $\mathcal{T}(m)=\mu$ where $\mu$ is the unique solution to \eqref{MFG2order}-$(ii)$ and $m(x,v, 0)=m_0(x,v)$ found in Lemma~\ref{visco:buonapos}. Lemma \ref{visco:lemma4} ensures that $\mathcal{T}$ maps $\mathcal{C}$ into itself.\\
By the same arguments as in the proof of Theorem \ref{thm:main}, the map $\mathcal{T}$ is continuous with respect to the norm of $C([0,T]; \mathcal{P}_1(\R^{2N}))$ and it is compact. Hence, Schauder fixed point theorem ensures the existence of a fixed point $m$ for $\mathcal{T}$. Let $u$ denote the corresponding solution to \eqref{MFG2order}-$(i)$ and -$(iii)$.  By Lemma~\ref{visco:lemma5.2} and Lemma~\ref{visco:buonapos} again, $u$ and $m$ are regular. In conclusion, $(u,m)$ is the desired solution to \eqref{MFG2order}.\\
Let us now prove the uniqueness part of the statement. Let $(u_1,m_1)$ and $(u_2,m_2)$ be two solutions; set $\overline u=u_1-u_2$. Our aim is to follow the arguments in the proof of Proposition~\ref{prp:!}. To this end, it is enough to prove that $\overline u$ is an admissible test-function for $m_1$ and $m_2$.
Indeed, for any $R>1$, let $\phi_R$ be a cut-off function in $\R^{2N}$ defined by $\phi_R(x,v):=\phi_1(x/R,v/R)$ where $\phi_1$ is a $C^2$ function such that $\phi_1=1$ in $B_1$, $\phi_1=0$ outside $B_{2}$. Clearly,
\begin{equation}\label{uniq0}
D_{x,v}\phi_R=0 \quad\textrm{outside }\overline{B_{2R}\setminus B_R},\quad \|D_{x,v}\phi_R\|_\infty\leq C/R\quad \textrm{and }\|\Delta_{x,v}\phi_R\|_\infty\leq C/R^2.
\end{equation}
Using $\phi_R \overline u$ as test-function in~\eqref{MFG2order}-$(ii)$ with $(u,m)=(u_i,m_i)$ for $i=1$ or $i=2$, we get
\begin{eqnarray}\notag
0&=&\iint_{R^{2N}\times[0,T]} m\left[
-\phi_R \partial_t \overline u-\sigma \Delta_{x,v} (\phi_R \overline u)+ D_v u\cdot D_v(\phi_R \overline u)+v\cdot D_x(\phi_R \overline u)\right]\, dxdvdt\\\notag
&&+\int_{\R^{2N}}m(x,v,T)\phi_R \left(G[m_1(T)](x,v)-G[m_2(T)](x,v)\right)dxdv-\int_{\R^{2N}}m_0\phi_R \overline udxdv\\ \notag
&=&\iint_{R^{2N}\times[0,T]} m\phi_R\left(2 v\cdot D_x \overline u+\frac{|D_v u_2|^2-|D_v u_1|^2}2 +F[m_1] -F[m_2]+D_vu\cdot D_v\overline u \right)\, dxdvdt\\\notag
&&+\int_{\R^{2N}}m(x,v,T)\phi_R \left(G[m_1(T)](x,v)-G[m_2(T)](x,v)\right)dxdv-\int_{\R^{2N}}m_0\phi_R \overline udxdv\\\notag
&&+ \iint_{R^{2N}\times[0,T]} m\left(-\sigma \overline u \Delta_{x,v} \phi_R -2\sigma D_{x,v}\phi_R\cdot D_{x,v}\overline u\right)\, dxdvdt\\\label{uniq2}
&& +\iint_{R^{2N}\times[0,T]} m\left(\overline u D_v u\cdot D_v\phi_R +\overline u v\cdot D_x\phi_R\right)\, dxdvdt
\end{eqnarray}
where the second equality is due to equation \eqref{MFG2order}-$(i)$.
Since $m>0$ and $m\in L^\infty((0,T);\mathcal{P}_2(\R^{2N}))$ (see Lemma~\ref{visco:buonapos} and Lemma~\ref{visco:lemma4}), by the estimates on $u_i$ in Lemma \ref{visco:lemma5.2}, the dominated convergence theorem ensures that as $R\to\infty$, the first two lines in right hand side of \eqref{uniq2} converge to
\begin{multline*}
\iint_{R^{2N}\times[0,T]} m\left[2 v\cdot D_x \overline u-\frac{|D_v u_1|^2}2  +\frac{|D_v u_2|^2}2 +F[m_1] -F[m_2] +D_vu\cdot D_v\overline u\right]\, dxdvdt\\
+\int_{\R^{2N}}m(x,v,T) \left(G[m_1(T)](x,v)-G[m_2(T)](x,v)\right)dxdv-\int_{\R^{2N}}m_0 \overline udxdv;
\end{multline*}
hence, it remains to prove that the last two lines in the right hand side of~\eqref{uniq2} converge to $0$. Indeed, again by Lemmas~\ref{visco:lemma5.2},~\ref{visco:buonapos} and~\ref{visco:lemma4}, and by our estimates~\eqref{uniq0}, the dominated convergence theorem yields
\[
\iint_{R^{2N}\times[0,T]} m\left(-\sigma \overline u \Delta_{x,v} \phi_R -2\sigma D_{x,v}\phi_R\cdot D_{x,v}\overline u\right)\, dxdvdt\rightarrow 0.
\]
Let us now address to the last integral in the right hand side of~\eqref{uniq2}: the properties in~\eqref{uniq0} entail
\begin{equation*}
\left| m\left(\overline u D_v u\cdot D_v\phi_R +\overline u v\cdot D_x\phi_R\right)\right| \leq C  m(1+|v|^2) \chi_R
\end{equation*}
where $\chi_R$ is the characteristic function of $B_{2R}\setminus B_R$. Moreover, since $m\in L^\infty((0,T);\mathcal{P}_2(\R^{2N}))$, the right hand side in the last inequality belongs to $L^1$ independently of $R$. Therefore, again by the dominated convergence theorem, we get that as $R\to\infty$ the last integral in \eqref{uniq2} converges to $0$.
\end{proof}

\paragraph{\bf Acknowledgment.}
The authors are grateful to the anonymous referees for their fruitful comments and suggestions.
The work of Y. Achdou and N. Tchou was partially supported by the ANR (Agence Nationale de la Recherche) through MFG project ANR-16-CE40-0015-01.
Y. Achdou acknowledges support from the Chair Finance \& Sustainable Development and the FiME Lab  (Institut Europlace de Finance).
The work of N. Tchou is partially supported by the Centre Henri Lebesgue ANR-11-LABX-0020-01.
P. Mannucci and C. Marchi are members of GNAMPA-INdAM and were partially supported also by the research project of the University of Padova ``Mean-Field Games and Nonlinear PDEs'' and by the Fondazione CaRiPaRo Project ``Nonlinear Partial Differential Equations: Asymptotic Problems and Mean-Field Games''.

\bibliographystyle{plain}

\end{document}